\numberwithin{equation}{section}
\theoremstyle{plain}
\newtheorem{theorem}{Theorem}[section]
\newtheorem{CO}[theorem]{Corollary}
\newtheorem{LE}[theorem]{Lemma}
\newtheorem{PR}[theorem]{Proposition}
\newtheorem*{theorem*}{Theorem}
\theoremstyle{definition}
\newtheorem{RE}[theorem]{Remark}
\newcounter{claim_nb}[theorem] 
\theoremstyle{remark}
\newtheorem{claim}[claim_nb]{Claim}
\newcommand{\cal}[1]{\mathcal #1}
\newcommand{\zC}{\cal C}
\newcommand{\zF}{\cal F}
\def\VFS{transversal}
\newcounter{enumi_counter}
\newenvironment{enumi}
{\begin{list}
	{(\roman{enumi_counter})}
	{\usecounter{enumi_counter}
	 \setlength{\topsep}{0.0in}}}
{\end{list}}
\newenvironment{cproof}
{\begin{proof}
 [Proof of Claim:]
 \vspace{-3.2\parsep}}
{ \end{proof}}
\begin{document}

\setcounter{page}{1}

\title{Packing directed circuits exactly}

\author{Bertrand Guenin}
\author{Robin Thomas}

\thanks{
Research partially supported by NSF under 
Grant No. DMS 96-32032 and
Grant No. DMS-9970514.}
\thanks{{\em Classification:} 
05C20, 
90C47} 

\address{
Bertrand Guenin \newline
Department of Combinatorics and Optimization \newline
Faculty of Mathematics \newline
University of Waterloo \newline
Waterloo, ON N2L 3G1, Canada}

\address{
Robin Thomas \newline
School of Mathematics \newline
Georgia Institute of Technology \newline
Atlanta, Georgia  30332, USA}

\date{January 2001. Revised 1 December 2010}

\begin{abstract}
We give an ``excluded minor" and a ``structural"
characterization of  digraphs $D$ that
have the property that for every subdigraph $H$ of $D$, the maximum
number of disjoint circuits in $H$ is equal to the minimum cardinality
of a set $T\subseteq V(H)$ such that $H\backslash T$ is acyclic.
\end{abstract}

\keywords{Directed circuit, vertex feedback set, min-max theorem}

\maketitle
%
%
\section{Introduction}
Graphs and digraphs in this paper may have loops and multiple edges.
 Paths and circuits have no
``repeated" vertices, and in digraphs they are directed.
A {\em \VFS} in a digraph $D$ is a set of
vertices $T$ which intersects every circuit, i.e. $D \backslash T$ 
is acyclic. A {\em packing of circuits} (or {\em packing} for short)
is a collection  of pairwise (vertex-)disjoint circuits.
The cardinality of a minimum \VFS\  is denoted by $\tau(D)$ and the
cardinality of a maximum packing  is denoted by $\nu(D)$.
Clearly $\nu(D)\le\tau(D)$, and our objective is to study when equality
holds. We will show in Section~\ref{sec-planar} that this is the case
for every strongly planar digraph. (A digraph is {\em strongly planar}
if it has a planar drawing such that for every vertex $v$, the edges
with head $v$ form an interval in the cyclic ordering of edges incident with
$v$.)
However, in general there is
probably no nice characterization of digraphs for which equality holds,
and so instead we characterize digraphs such that equality holds for
{\em every subdigraph}. Thus
we say that a digraph $D$ {\em packs} if $\tau(D')=\nu(D')$ for every 
subdigraph $D'$ of $D$. 

We will give two characterizations: one in terms
of excluded minors, and the other will give a structural description
of digraphs that pack.
We say that an edge $e$ of a digraph $D$ with head $v$ and tail $u$
is {\em special} if either $e$ is the only edge of $D$ with head $v$,
or it is the only edge of $D$ with tail $u$, or both. We say that
a digraph $D$ is a {\em minor} of a digraph $D'$ if $D$ can be obtained
from a subdigraph of $D'$ by repeatedly contracting special edges.
It is easy to see that if a digraph packs, then so do all its minors.
Thus digraphs that pack can be characterized by a list of minor-minimal
digraphs that do not pack. By an {\em odd double circuit} we mean
the digraph obtained from an undirected circuit of odd length at least three
by replacing each edge by a pair of directed edges, one in each direction. 
The digraph $F_7$ is defined in Figure~\ref{f7-fig}.
\begin{figure}[!htb]
\epsfysize=1.8in
\epsfbox{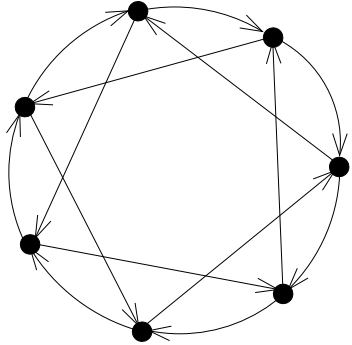}
\caption{The digraph $F_7$.} \label{f7-fig}
\end{figure}
\noindent
The following is our excluded minor characterization.
\begin{theorem} \label{main-th}
A digraph packs if and only if it has no minor isomorphic to an odd double circuit or $F_7$.
\end{theorem}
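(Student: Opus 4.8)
The plan is to prove the two implications separately; the ``only if'' direction is routine and essentially all the work is in the converse.

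Since every minor of a digraph that packs also packs, for the ``only if'' direction it suffices to check that neither an odd double circuit nor $F_7$ packs. If $C$ is the odd double circuit whose underlying odd cycle has $2k+1$ vertices, then any circuit of $C$ uses at least two vertices, so $\nu(C)\le k$, and $k$ disjoint digons are easily found, giving $\nu(C)=k$; on the other hand a \VFS\ of $C$ meets every digon and is therefore a vertex cover of the underlying odd cycle, so $\tau(C)=k+1>\nu(C)$. A direct inspection of Figure~\ref{f7-fig} shows $\nu(F_7)<\tau(F_7)$ as well. Hence any digraph with an odd double circuit or $F_7$ minor fails to pack.

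For the ``if'' direction, suppose the statement fails and choose a digraph $D$ that has no odd double circuit or $F_7$ minor, does not pack, and has $|V(D)|+|E(D)|$ minimum. Every proper minor of $D$ -- in particular every proper subdigraph -- has no odd double circuit or $F_7$ minor and is smaller, so it packs; hence the only subdigraph of $D$ that can witness the failure to pack is $D$ itself, i.e. $\nu(D)<\tau(D)$. The first part of the argument is a sequence of reductions making $D$ ``irreducible''. Since $\nu$ and $\tau$ are additive over strong components and each strong component is a subdigraph, $D$ is strongly connected. If $e$ is a special edge with tail $u$ and head $v$, one checks, one subdigraph at a time, that $D$ packs if and only if $D/e$ packs: if $e$ is the only edge with head $v$ then every circuit through $v$ uses $e$, and if $e$ is the only edge with tail $u$ then every circuit through $u$ uses $e$, and in either case minimum transversals and maximum packings can be transferred across the contraction. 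Since $D/e$ is a smaller minor it packs, so $D$ would pack -- a contradiction. Therefore $D$ has no special edge, so every vertex of $D$ has in-degree and out-degree at least $2$. A further, more delicate, round of reductions rules out small separations: whenever $D$ is obtained from two smaller digraphs by identifying a bounded number of vertices, a minimum \VFS\ and a maximum packing of the two pieces can be recombined, forcing one of the pieces not to pack. So $D$ may be assumed internally highly connected.

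It remains to locate one of the two forbidden minors inside this irreducible $D$. Fix a minimum \VFS\ $T$, so $|T|=\tau(D)>\nu(D)$. For each $v\in T$ the digraph $D\backslash v$ packs and $\tau(D\backslash v)=\nu(D\backslash v)=|T|-1$, so $D\backslash v$ contains $|T|-1$ pairwise disjoint circuits avoiding $v$; and since $D$ has no packing of $|T|$ circuits, every circuit of $D$ through $v$ must meet one of these. Splitting $v$ into an in-copy and an out-copy and applying Menger's theorem turns this into the statement that the union $W$ of those $|T|-1$ circuits is a blocking set for the corresponding linkage problem; running this over all $v\in T$, over all minimum transversals, and over the possible packings constrains the local structure of $D$ around $T$ tightly. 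One natural route from here is to invoke the result of Section~\ref{sec-planar} that every strongly planar digraph packs: since $D$ does not pack it is not strongly planar, so it contains a minimal obstruction to strong planarity, and feeding this obstruction into the linkage structure around $T$ should produce either an odd double circuit minor -- the ``parity'' obstruction, matching the fact that an odd double circuit fails to be strongly planar precisely because the relevant $2$-colouring of its odd underlying cycle does not exist -- or, in the single exceptional configuration that is not of this parity type, an $F_7$ minor. In both cases $D$ has a forbidden minor, contradicting its choice.

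The main obstacle, by far, is this last step: passing from ``$D$ is irreducible and does not pack'' to ``$D$ has an odd double circuit or $F_7$ minor''. Directed graphs lack the robust structural machinery (well-behaved tree-decompositions, two-sided linkage theorems) available in the undirected and planar settings, so the extraction of the minor has to go through a long and careful case analysis built around a minimum feedback vertex set and the nonexistence of one extra circuit; keeping that analysis finite, and in particular isolating precisely the $F_7$ exception rather than a growing family of sporadic cases, is the crux of the difficulty. An alternative I would also pursue is to recast circuit-versus-transversal as a packing/covering problem for an associated clutter and to try to read the theorem off a classification of Mengerian clutters -- the co-occurrence of an $F_7$-type obstruction with an odd-cycle family is very much in the spirit of such classifications -- although making that translation precise appears far from routine.
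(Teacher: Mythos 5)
Your ``only if'' direction is fine and matches the paper. But the ``if'' direction has a genuine gap, and it is exactly the part that constitutes the paper's content. After the (standard) minimal-counterexample setup and connectivity reductions, you arrive at the step ``pass from an irreducible non-packing $D$ to an odd double circuit or $F_7$ minor'' and you offer only a programme --- split vertices, apply Menger around a minimum transversal, feed in a ``minimal obstruction to strong planarity'' --- together with the admission that this is the main obstacle. No argument is given, and no such direct extraction is known; the paper does not attempt it. Instead it translates $D$ into a bipartite graph $G$ with a perfect matching $M$ via $D=D(G,M)$, and invokes the McCuaig/Robertson--Seymour--Thomas characterization of braces with no $K_{3,3}$ (equivalently, of digraphs with no odd double circuit minor), Theorem~\ref{maintool}: a minimal counterexample would be a trisum of braces each built from planar braces. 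The planar base case is handled by Lucchesi--Younger (Corollary~\ref{planar-pr}), and the heart of the paper (Sections~\ref{sec-2sum}--\ref{sec-4sum}, Propositions~\ref{pr5.2}, \ref{prkis1}, \ref{pr7.2} and the gluing Lemmas~\ref{lem6.1}, \ref{4sum-le}) shows that $\nu=\tau$ is preserved across the trisum in each of the three cases for how $M$ meets the 4-cycle of the sum. None of this machinery, nor any substitute for it, appears in your proposal; your alternative suggestion via Mengerian clutters is likewise only a remark.

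Two of your intermediate reductions are also unsupported as stated. The claim that for a special edge $e$ one has ``$D$ packs if and only if $D/e$ packs'' needs the converse direction (``$D/e$ packs $\Rightarrow$ $D$ packs''), which is not obvious and is not used in the paper; the paper only needs that minors of packing digraphs pack, and its actual reductions are the $0$-sum and $1$-sum decompositions (Propositions~\ref{0sum-pr} and \ref{1sum-pr}), the latter requiring a genuinely careful recombination argument rather than the one-line transfer you assert. Similarly, ``whenever $D$ is obtained by identifying a bounded number of vertices, transversals and packings can be recombined'' is precisely what fails naively and what Lemmas~\ref{lem6.1} and \ref{4sum-le} are designed to overcome under extra hypotheses; asserting it in general is not a proof.
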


If $D$ is an odd double circuit with $k$ vertices then 
$\tau(D)=\lceil k/2 \rceil > \nu(D)=\lfloor k/2 \rfloor$. 
Moreover, $\tau(F_7)=3 > \nu(F_7)=2$. 
Thus odd double circuits and $F_7$ do not pack and the content of 
Theorem~\ref{main-th} is to prove the converse.

The structural characterization can be stated directly in terms of
digraphs, but it is more convenient to rephrase it in terms of bipartite
graphs, and therefore we postpone its statement until
Section~\ref{sec-braces}. 
Roughly, the characterization states that a digraph packs if and only if
it can be obtained from strongly planar digraphs by means of certain 
composition operations.  

Our main tool in the proof is a characterization of bipartite graphs that have
a Pfaffian orientation, found independently by McCuaig \cite{McCuaig:97}
and by Robertson, Seymour and the second author \cite{Robertson:99}.
We present the characterization in Section~\ref{sec-braces}.
The rest of the paper is organized as follows. In Section~\ref{related}
we mention three related results.
Section~\ref{sec-2conn} reduces the problem to strongly 2-connected digraphs.
It is shown in Section~\ref{sec-planar} that strongly planar digraphs pack.
Sections~\ref{sec-2sum}-\ref{sec-4sum} show that the property that digraphs pack
is preserved under the composition operations of the characterization theorem,
thus completing the proof of Theorem~\ref{main-th}. 
Finally, Section~\ref{sec-conclusion} offers some closing remarks.
%
%
\section{Related Results}\label{related}  
In this section we review three related results.  The first is a classical
theorem of Lucchesi and Younger, of which we only state a corollary
\cite{Lucchesi:78}(Theorem B).
\begin{theorem}\label{thm2.1}
Let $D$ be a planar digraph and $\zF$ be the family of its directed circuits.
Then for any set of weights $w: E(D) \rightarrow Z_+$ we have,
\begin{equation} \label{eq-LY}
\begin{aligned}
\mbox{ } & \quad \min\{
\sum_{e \in E(D)} w(e) x_e:
\sum_{e \in C} x_e \geq 1, \forall C \in \zF, x \in \{0,1\}^{E(D)}\} \\
= & \quad \max\{
\sum_{C:C \in \zF} y_C:
\sum_{C:e \in C \in \zF} y_C \leq w(e), \forall e \in E(D), y \in Z_+^{\zF}\}.
\end{aligned}
\end{equation}
\end{theorem}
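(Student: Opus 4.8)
The plan is to derive this from the Lucchesi--Younger theorem on directed cuts by means of planar duality. Since both sides of \eqref{eq-LY} are additive over the connected components of $D$, I would first assume that $D$ is connected and fix a drawing of it in the plane. Let $D^*$ be the planar dual graph of $D$, orient each dual edge $e^*$ by rotating the edge $e$ through a right angle using a fixed orientation of the plane, and set $w^*(e^*):=w(e)$.

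Next I would set up the dictionary between directed circuits of $D$ and directed cuts of $D^*$. If $C$ is a directed circuit of $D$, then the faces of $D$ lying inside $C$ form a set $S$ of vertices of $D^*$, and by the choice of orientation every edge of $D^*$ with exactly one end in $S$ crosses the boundary of $S$ in the same direction; hence $\delta_{D^*}(S)$ is a directed cut, and it is in fact a \emph{minimal} directed cut (a directed bond) precisely because $C$ is a circuit. Conversely, every directed bond of $D^*$ is obtained in this way from a unique directed circuit of $D$, using the standard fact that bonds of a plane graph are exactly the circuits of its dual. From this correspondence I need two consequences. First, a set $X\subseteq E(D)$ meets every directed circuit of $D$ if and only if the corresponding set $X^*\subseteq E(D^*)$ meets every directed cut of $D^*$; the nontrivial direction uses that every directed cut contains a directed bond, so hitting all directed bonds is the same as hitting all directed cuts. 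Second, given any family of directed cuts of $D^*$, replacing each member by a directed bond contained in it does not increase the load on any edge, so the maximum on the right-hand side of \eqref{eq-LY} equals the maximum number of directed cuts of $D^*$ (counted with repetition) such that each edge $e^*$ lies in at most $w^*(e^*)$ of them.

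With the dictionary in hand, I would apply the capacitated form of the Lucchesi--Younger theorem to $D^*$ with weights $w^*$: the minimum weight of a set of edges meeting every directed cut of $D^*$ equals the maximum size of a family of directed cuts of $D^*$ in which each edge $e^*$ is used at most $w^*(e^*)$ times, and both optima are attained integrally. Translating the left-hand side and the right-hand side back through the correspondence of the previous paragraph gives exactly the two sides of \eqref{eq-LY}.

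The only place where genuine care is needed is the duality dictionary: pinning down the orientation convention on $D^*$ so that directed circuits correspond to directed \emph{bonds} and not merely to bonds, and handling the features the paper explicitly allows but that make planar duality slightly delicate --- loops of $D$ (length-one directed circuits, dual to directed cuts of size one) and parallel or anti-parallel edges. All of this is routine once the connected, loopless case is settled, and the remainder of the argument is a direct citation of the Lucchesi--Younger theorem.
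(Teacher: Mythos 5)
Your argument is correct, and it is essentially the argument that lies behind the paper's treatment: the paper does not prove Theorem~\ref{thm2.1} at all, but simply quotes it as a corollary (Theorem~B) of Lucchesi and Younger \cite{Lucchesi:78}, so your contribution is to reconstruct the standard planar-duality derivation from the dicut min--max theorem. The dictionary you set up (directed circuits of a plane digraph correspond to directed bonds of the rotated dual, hitting all directed bonds is the same as hitting all directed cuts because every nonempty directed cut contains a directed bond, and shrinking cuts to bonds never increases edge loads) is the right one, and applying the capacitated Lucchesi--Younger theorem to $D^*$ with $w^*(e^*)=w(e)$ then yields exactly \eqref{eq-LY}. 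Two small points deserve a sentence in a written-out version: you invoke the \emph{capacitated} form of Lucchesi--Younger, which is not the bare cardinality statement, so you should either cite it in that form (it follows from total dual integrality of the dicut system, or from subdividing each edge $e$ into a path of $w(e)$ edges when all weights are positive) or note how edges of weight $0$ are handled if $Z_+$ is read as including $0$ (such edges may be placed in the transversal for free, which again reduces to the positive case); and your orientation convention on $D^*$ should be fixed once so that loops of $D$ (dual to bridges, giving directed cuts of size one) and anti-parallel edges cause no trouble, as you yourself flag. With those routine details supplied, your proof is a complete and correct substitute for the citation.
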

Thus, in particular, in a planar digraph the maximum cardinality of a
collection of edge-disjoint circuits is equal to the minimum cardinality of
a set of edges whose deletion makes the graph acyclic.  This relation does
not hold for all digraphs, but there is an upper bound on $\tau (D)$
as a function of $\nu (D)$.  (A simple construction --- splitting each
vertex into a ``source" and a ``sink," also used in the proof of
Corollary~\ref{planar-pr} --- shows that the same function serves as
an upper bound for both the edge-disjoint as well as vertex-disjoint
version of the problem.  Note, however, that this construction does not
preserve planarity, but it preserves strong planarity.)  
McCuaig \cite{McCuaig:97} characterized
all digraphs $D$ with $\nu (D)\le 1$; the following follows immediately
from his characterization (but there does not seem to be a direct proof).

\begin{theorem}\label{thm2.2}
For every digraph $D$, if $\nu (D)\le 1$, then $\tau(D)\le 3$.
\end{theorem}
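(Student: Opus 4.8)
The plan is to read the bound off McCuaig's structural characterization of digraphs with no two vertex-disjoint directed circuits --- call these \emph{intercyclic} --- \cite{McCuaig:97}; as the remark preceding the statement indicates, I do not expect a short self-contained argument, the point being to extract the sharp constant from that classification.

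First I would make the standard reduction to strongly connected digraphs. Every circuit of $D$ lies in a single strong component of $D$, and two distinct strong components each containing a circuit would give two vertex-disjoint circuits, contradicting $\nu(D)\le 1$. Hence all circuits of $D$ lie in one strong component $D_0$, so $\nu(D_0)=\nu(D)$ and $\tau(D_0)=\tau(D)$, and it suffices to treat the case when $D$ is strongly connected; we may also assume $\nu(D)=1$, since otherwise $D$ is acyclic and $\tau(D)=0$.

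Next I would invoke McCuaig's classification, which describes every strongly connected intercyclic digraph explicitly, as either a digraph already possessing a \VFS\ of size at most three; one of boundedly many explicitly listed exceptional digraphs (sporadic examples, possibly together with an infinite family carrying an integer parameter); or a digraph obtained from smaller intercyclic digraphs by the composition operations of the characterization. In each case one checks directly that $D$ has a \VFS\ of size at most three: trivially in the first case; by inspection for the exceptional digraphs (uniformly in the parameter, where there is one), simply by pointing to three vertices meeting every circuit; and, for a composition, by verifying operation by operation that applying it to intercyclic digraphs having transversals of size at most three again yields such a digraph --- here one uses that the operations glue the pieces along a bounded interface, so that a circuit of the composite not confined to a single piece is controlled. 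Running through the list of cases then gives $\tau(D)\le 3$ throughout.

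The main obstacle is therefore not in this argument but in McCuaig's characterization itself: there is no known elementary proof that $\nu(D)\le 1$ forces $\tau(D)\le 3$. (The Erd\H{o}s--P\'osa-type theorem for directed circuits of Reed, Robertson, Seymour and the second author gives \emph{some} bound on $\tau(D)$ as a function of $\nu(D)$, but extracting the sharp value $3$ when $\nu(D)=1$ seems to require the full structural information of the classification.)
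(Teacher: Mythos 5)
Your approach coincides with the paper's: the paper gives no self-contained argument for this theorem, stating only that it follows immediately from McCuaig's characterization of digraphs $D$ with $\nu(D)\le 1$ and that no direct proof seems to be known. Your reduction to the strongly connected case followed by a case-check over McCuaig's classification is precisely that intended route, so the proposal matches the paper's proof in substance.
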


In general, Reed, Robertson, Seymour and the second author \cite{ReeRobSeyTho}
proved the following.

\begin{theorem}\label{thm2.3}
There is a function $f$ such that for every digraph $D$
$$\tau (D)\le f(\nu (D)).$$
\end{theorem}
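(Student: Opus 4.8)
The plan is to prove, by induction on $k=\nu(D)$, that $\tau(D)\le f(k)$ for a suitable nondecreasing function $f$ with $f(0)=0$. \emph{Reductions:} every circuit of $D$ lies in a single strong component, so both $\tau$ and $\nu$ are additive over the strong components of $D$, and at most $k$ of those components contain a circuit; hence if $f_0$ bounds $\tau$ for strongly connected digraphs, the choice $f(k)=k\,f_0(k)$ works in general, and we may assume $D$ is strongly connected. Deleting every vertex and edge that lies on no circuit changes neither $\tau$ nor $\nu$, so we may also assume every vertex and edge of $D$ lies on a circuit. The cases $k\le 1$ are precisely Theorem~\ref{thm2.2}, giving $f_0(0)=0$ and $f_0(1)=3$.

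\emph{Setting up the inductive step.} For $k\ge 2$, let $T$ be a minimum \VFS, so $|T|=\tau(D)$ and $A:=D\setminus T$ is acyclic. By minimality of $T$, for each $v\in T$ the set $T\setminus\{v\}$ is not a \VFS, so $D\setminus(T\setminus\{v\})$ contains a circuit $C_v$; since $A$ is acyclic, $C_v\cap T=\{v\}$, so $P_v:=C_v\setminus\{v\}$ is a directed path in $A$ with an edge from $v$ to its first vertex and an edge from its last vertex to $v$. The inductive step is finished once we find $X\subseteq V(D)$ with $|X|$ bounded by a function of $k$ and with $\nu(D\setminus X)\le k-1$, since then $\tau(D)\le|X|+\tau(D\setminus X)\le|X|+f(k-1)$, which defines $f_0(k)$. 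So the whole difficulty is to produce such an $X$, and this is a routing question on the acyclic digraph $A$: fix a topological order of $A$; we are given the $|T|$ directed paths $P_v$, each with its prescribed return through the private vertex $v$, and we must show that either some such bounded-size $X$ exists, or there already exist $k+1$ pairwise disjoint circuits --- which is absurd. If the $P_v$ were pairwise disjoint the latter would hold immediately, so the content is entirely in how they overlap.

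\emph{The crux, and the expected obstacle.} I would attack the routing question by Ramsey-type pruning: pass to a large sub-collection of the $P_v$ whose interleaving pattern in the topological order of $A$, and whose branching off one another, is ``regular.'' In a laminar, nested regime the desired $X$ --- or else many disjoint circuits --- should be obtainable by a direct argument combined with the induction on $k$; in a ``crossing'' regime the crossings can be organized into a cylindrical-grid-like subconfiguration, which visibly contains $k+1$ disjoint circuits. Carrying out the crossing analysis rigorously, and above all controlling how the returns through $T$ interfere with the path structure so that the Ramsey estimates actually close, is the delicate point, and I expect it to be the main obstacle; this is in essence the technical core of the argument of Reed, Robertson, Seymour and the second author. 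A cleaner, though much more expensive, route to the same conclusion runs through the directed grid theorem of Kawarabayashi and Kreutzer: $\nu(D)\le k$ excludes a cylindrical grid of height $k+1$ as a butterfly minor --- such a grid already contains $k+1$ disjoint circuits --- so $D$ has directed tree-width bounded in terms of $k$, and on digraphs of bounded directed tree-width a minimum \VFS\ can be assembled by dynamic programming over a directed tree-decomposition, giving $\tau(D)$ a bound in terms of $k$ and the width; the function $f$ produced this way is enormous.
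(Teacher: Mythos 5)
The paper does not prove this statement at all --- it is quoted from Reed, Robertson, Seymour and Thomas \cite{ReeRobSeyTho} --- so the only question is whether your argument stands on its own, and it does not. The preliminary reductions (additivity over strong components, the base cases $k\le 1$ via Theorem~\ref{thm2.2}, and the observation that it suffices to find a bounded-size $X$ with $\nu(D\backslash X)\le k-1$) are fine, but everything after that is a plan rather than a proof. The existence of such an $X$ (or, failing that, of $k+1$ disjoint circuits) is essentially the whole content of the theorem: the paths $P_v$ can overlap in very complicated ways, and the ``Ramsey-type pruning'' into a laminar regime versus a crossing regime that ``can be organized into a cylindrical-grid-like subconfiguration'' is exactly the technical core of \cite{ReeRobSeyTho}, which occupies most of that paper and which you explicitly defer. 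Nothing in your sketch shows that the number of distinct interleaving patterns is controlled, that the returns through $T$ can be made compatible with the pruned subfamily, or that the crossing regime really yields $k+1$ \emph{disjoint} circuits; as written, the inductive step is an announcement of intent, not an argument.

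The fallback route is also gapped. Granting the directed grid theorem (a much later and far heavier result than anything used here), bounded directed tree-width does not by itself yield $\tau(D)\le g(k,\mathrm{width})$: dynamic programming over a directed tree-decomposition lets one \emph{compute} a minimum transversal, but it does not bound its \emph{size} in terms of $\nu$ and the width --- that implication needs a separate combinatorial argument (e.g., an induction along the decomposition using the guards), which you do not supply. So neither route closes, and the proposal should be regarded as a correct high-level outline of the known strategy with the decisive steps missing.
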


The function $f$ from the proof of Theorem~\ref{thm2.3}, albeit explicit, grows
rather fast.  The best lower bound of $f(k)\ge\Omega(k\log k)$ was obtained
by Noga Alon (unpublished).
%
Finally, the undirected analogue of the problem we study is quite easy.
It becomes much harder if we only require that the equality
$\nu=\tau$ hold for all {\em induced} subgraphs. This problem remains open.
However, Ding and Zang~\cite{DinZan} managed to solve the closely 
related problem of characterizing graphs for which it is required that a weighted 
version of the relation $\nu=\tau$ holds. 
They gave a characterization by means of excluded induced subgraphs, 
and also gave a structural description of those graphs.
We omit the precise statement.
%
%
\section{Strong 2-connectivity} \label{sec-2conn}

Let $D$ be a digraph and $\zC$ a packing of circuits.
We will say that $\zC$ {\em uses a vertex $v$} if there exists a circuit 
$C$ in $\zC$ with $v \in V(C)$. Consider a digraph $D$ that packs.
Then some minimum \VFS\  includes $v$ if and only if 
$\tau(D \backslash v)=\tau(D)-1$.
As $D$ packs, $\nu(D \backslash v)=\tau(D \backslash v)=\tau(D)-1=\nu(D)-1$.
But $\nu (D\backslash v)=\nu (D)-1$ if and only if
every maximum packing uses $v$.
Thus we have shown the following.
\begin{RE} \label{easy-re}
Let $D$ be a digraph that packs.
There exists a minimum \VFS\  of $D$ containing $v$ 
if and only if every maximum packing of $D$ uses $v$.
\end{RE}
A digraph is {\em strongly connected} if for every pair of vertices
$u$ and $v$ there is a path from $u$ to $v$. 
A digraph $D$ is {\em strongly $k$-connected} if for every 
$T \subseteq V(D)$, where $|T| \leq k-1$, 
the digraph $D \backslash T$ is strongly connected.
If $D$ is not strongly connected, then $V(D)$ can be partitioned
into non-empty sets $X_1,X_2$ such that no edge has tail in $X_1$
and head in $X_2$. Let $D_1 := D \backslash X_2$ and $D_2 := D \backslash X_1$.
Then $D$ is said to be a {\em 0-sum} of $D_1$ and $D_2$. 
Since every circuit of $D$ is a circuit of precisely one of $D_1$
or $D_2$, the following is straightforward.
\begin{PR} \label{0sum-pr}
Let $D$ be the $0$-sum of $D_1$ and $D_2$. 
Then $D_1$ and $D_2$ pack if and only 
$D$ packs.
\end{PR}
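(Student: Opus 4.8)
The plan is to reduce the statement to two additivity identities that hold for \emph{every} $0$-sum, and then propagate them through all subdigraphs. Recall the setup: $V(D)$ is partitioned into nonempty sets $X_1,X_2$ with no edge having tail in $X_1$ and head in $X_2$, and $D_1=D\backslash X_2$, $D_2=D\backslash X_1$. The key observation (already noted in the excerpt) is that every directed circuit of $D$ lies entirely in $X_1$ or entirely in $X_2$: traversing such a circuit, once a vertex is in $X_1$ the next vertex cannot be in $X_2$, so a circuit meeting $X_1$ must stay in $X_1$. Hence the circuits of $D$ are exactly the circuits of $D_1$ together with the circuits of $D_2$.

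First I would prove $\nu(D)=\nu(D_1)+\nu(D_2)$. Given packings $\zC_1$ of $D_1$ and $\zC_2$ of $D_2$, the circuits in $\zC_1$ use only vertices of $X_1$ and those in $\zC_2$ only vertices of $X_2$, so $\zC_1\cup\zC_2$ is a packing of $D$; this gives ``$\ge$''. Conversely, any packing $\zC$ of $D$ splits into its circuits contained in $X_1$ and those contained in $X_2$, yielding packings of $D_1$ and $D_2$; this gives ``$\le$''. Symmetrically I would prove $\tau(D)=\tau(D_1)+\tau(D_2)$: if $T_1,T_2$ are minimum \VFS s of $D_1,D_2$, then $T_1\cup T_2$ meets every circuit of $D$, and since $T_1\subseteq X_1$ and $T_2\subseteq X_2$ are disjoint, $|T_1\cup T_2|=|T_1|+|T_2|$, giving ``$\le$''; conversely, for any \VFS\ $T$ of $D$ the sets $T\cap X_1$ and $T\cap X_2$ are \VFS s of $D_1$ and $D_2$ (since circuits of $D_i$ are circuits of $D$), giving ``$\ge$''. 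Combining the two identities with $\nu(D_i)\le\tau(D_i)$ shows that $\nu(D)=\tau(D)$ if and only if $\nu(D_1)=\tau(D_1)$ and $\nu(D_2)=\tau(D_2)$.

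To conclude, I would note that the $0$-sum structure passes to subdigraphs. If $D'$ is a subdigraph of $D$ and $X_i':=X_i\cap V(D')$, then no edge of $D'$ runs from $X_1'$ to $X_2'$; so either one of $X_1',X_2'$ is empty, in which case $D'$ is a subdigraph of $D_1$ or of $D_2$, or $D'$ is the $0$-sum of the subdigraphs $D'\backslash X_2'$ of $D_1$ and $D'\backslash X_1'$ of $D_2$. The forward direction of the proposition is then immediate, since $D_1=D\backslash X_2$ and $D_2=D\backslash X_1$ are subdigraphs of $D$ and the property of packing is inherited by subdigraphs. For the converse, assuming $D_1$ and $D_2$ pack, every subdigraph $D'$ of $D$ decomposes as above, and applying $\nu=\tau$ to the corresponding subdigraphs of $D_1$ and $D_2$ together with the two additivity identities yields $\nu(D')=\tau(D')$; hence $D$ packs. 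I do not expect a real obstacle here: the only point needing care is the bookkeeping verifying that every subdigraph of $D$ is again, possibly degenerately, a $0$-sum of subdigraphs of $D_1$ and $D_2$, so that the additivity identities apply subdigraph by subdigraph.
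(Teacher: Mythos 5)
Your proof is correct and is exactly the expansion of the paper's intended argument: the paper dispenses with this proposition by noting that every circuit of $D$ is a circuit of precisely one of $D_1$ or $D_2$ and calling the rest straightforward, and your additivity of $\nu$ and $\tau$ plus the subdigraph bookkeeping is precisely that straightforward part written out.
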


Suppose $D$ is strongly connected, but not strongly 2-connected;  
thus there is a vertex $v$ such that 
$D \backslash v$ is not strongly connected.
Then there is a partition of $V(D)-\{v\}$ into non-empty sets $X_1,X_2$ 
such that
all edges with endpoints in both $X_1$ and $X_2$ have tail in $X_1$ and head in $X_2$.
Let $F$ be the set of all these edges.
For $i=1,2$ let $D_i$ be the digraph obtained from $D$ by deleting all edges
with both endpoints in $X_{3-i} \cup \{v\}$ and identifying all vertices of $X_{3-i} \cup \{v\}$
into a vertex called $v$. Thus edges of $F$ belong to both $D_1$ and $D_2$; in $D_1$ they
have head $v$ and in $D_2$ they have tail $v$.
We say that $D$ is a {\em 1-sum} of $D_1$ and $D_2$.

Let $D$ be a digraph.  We denote by $D+uv$ the digraph obtained from
$D$ by adding the vertices $u,v$ (if they are not vertices of $D$) and
an edge with tail $u$ and head $v$.  Let
us stress that we add the edge even if $D$ already has one or more edges
with tail $u$ and head $v$.
We use $D+\{u_1v_1,u_2v_2,\dots,
u_kv_k\}$ to denote $D+u_1v_1+u_2v_2+\cdots + u_kv_k$.

\begin{PR} \label{1sum-pr}
Let a strongly connected digraph $D$ be the $1$-sum of $D_1$ and $D_2$.
Then $D_1$ and $D_2$ pack if and only if 
$D$ packs.
\end{PR}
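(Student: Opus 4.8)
The plan is to work out how circuits, packings and transversals of $D$ relate to those of $D_1$ and $D_2$, exploiting that $v$ separates $X_1$ from $X_2$ very strongly. Since $D$ is strongly connected and no edge runs from $X_2$ to $X_1$, every path from $X_2$ to $X_1$ passes through $v$, so every circuit of $D$ is of one of three kinds: it lies entirely in $D\backslash X_2$ (a circuit of $D_1$), entirely in $D\backslash X_1$ (a circuit of $D_2$), or it ``crosses'' — leaving $v$ into $X_1$, using exactly one edge of $F$, running through $X_2$, and returning to $v$; contracting the $X_2$-part of a crossing circuit gives a circuit of $D_1$ through $v$, and contracting its $X_1$-part gives a circuit of $D_2$ through $v$. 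Dually, a circuit of $D_1$ either avoids $v$ (and is then a circuit of $D$ inside $X_1$) or passes through $v$, in which case it is either a circuit of $D$ of the first kind or encodes the ``$X_1$-half'' of a crossing circuit (when its edge into $v$ came from $F$); symmetrically for $D_2$. In both directions $\nu\le\tau$ is automatic, so only the reverse inequality is needed. Write $\tau_1:=\tau(D\backslash(X_2\cup\{v\}))$, $\tau_2:=\tau(D\backslash(X_1\cup\{v\}))$; a minimum transversal of $D$ containing $v$ has size exactly $1+\tau_1+\tau_2$, and one avoiding $v$ has size at least $\tau_1+\tau_2$.

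For the forward direction, assume $D$ packs; I would show $D_1$ packs (the argument for $D_2$ being symmetric). Let $D_1'$ be a subdigraph of $D_1$. If $v\notin V(D_1')$, then $D_1'$ is literally a subdigraph of $D$, so $\tau(D_1')=\nu(D_1')$. If $v\in V(D_1')$, I would lift $D_1'$ to a subdigraph $\widehat D$ of $D$ that ``undoes the contraction at $v$'': keep the part of $D_1'$ inside $X_1$ together with its genuine edges at $v$; reconnect each edge of $D_1'$ entering $v$ that came from $F$ to its true head in $X_2$; and adjoin a minimal acyclic return structure inside $D\backslash X_1$ — a union of shortest paths back to $v$, i.e.\ an in-branching rooted at $v$ — so that every reconnected head still reaches $v$. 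One then checks that collapsing the return-excursions maps circuits of $\widehat D$ onto circuits of $D_1'$, carries packings to packings, and lets transversals of $\widehat D$ be pushed down to transversals of $D_1'$ of no larger size (replace each $X_2$-vertex used by $v$). Hence $\nu(\widehat D)=\nu(D_1')$ and $\tau(\widehat D)=\tau(D_1')$, and since $\widehat D\subseteq D$ it packs, giving $\tau(D_1')=\tau(\widehat D)=\nu(\widehat D)=\nu(D_1')$.

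For the reverse direction, assume $D_1$ and $D_2$ pack; I would prove $\tau(D')=\nu(D')$ for every subdigraph $D'$ of $D$ by induction on $|V(D')|+|E(D')|$. If $v\notin V(D')$, the circuits of $D'$ split between the subdigraph of $D'$ on $V(D')\cap X_1$ (a subdigraph of $D_1$) and that on $V(D')\cap X_2$ (a subdigraph of $D_2$), both of which pack. If $v\in V(D')$ but $D'$ is not strongly connected, $D'$ is a $0$-sum of two proper subdigraphs, so Proposition~\ref{0sum-pr} and induction finish it. If $v\in V(D')$, $D'$ strongly connected, and $D'$ meets only one of $X_1,X_2$, then $D'$ is a subdigraph of $D_1$ or of $D_2$. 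Otherwise $D'$ meets both sides, $D'\backslash v$ is not strongly connected, $D'$ is itself a $1$-sum at $v$ of subdigraphs of $D_1$ and $D_2$, and if $D'\ne D$ this is a strictly smaller instance, handled by induction. There remains $D'=D$. By the last remark of the first paragraph, $\tau(D)=\min(1+\tau_1+\tau_2,\;B)$ where $B$ is the least size of a transversal avoiding $v$. If $B<1+\tau_1+\tau_2$, then $B=\tau_1+\tau_2$, and since the sides pack this equals $\nu(D\backslash(X_2\cup\{v\}))+\nu(D\backslash(X_1\cup\{v\}))\le\nu(D)\le\tau(D)=B$, so $\tau(D)=\nu(D)$. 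The remaining case is $B\ge 1+\tau_1+\tau_2$, where $\tau(D)=1+\tau_1+\tau_2$ and one must exhibit a packing of that size: a circuit through $v$ together with maximum packings of the two sides disjoint from it — equivalently, a circuit through $v$ disjoint from some maximum packing on each side.

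The reductions above are routine; the whole difficulty is concentrated in the case $B\ge 1+\tau_1+\tau_2$. Here the hypothesis on $B$ says that no pair of minimum transversals of the two sides can together meet all circuits through $v$, and one must convert this into the existence of a single circuit through $v$ compatible with maximum packings of both sides. Using that $D_1$ and $D_2$ pack, one shows (via Remark~\ref{easy-re} applied to $D_1$, to $D_2$, and to the subdigraphs of them obtained by deleting the $F$-edges) that each side must carry an ``$F$-type'' circuit through $v$ that is compatible with a maximum packing on its own side; the hard part is then to splice the good $X_1$-part of one with the good $X_2$-part of the other along a common edge of $F$, rerouting through $X_2$ (respectively $X_1$) as needed while remaining disjoint from the prescribed side-packings. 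Showing that this splicing/rerouting cannot be blocked — again using packing of $D_1$, $D_2$ and Remark~\ref{easy-re} — is the crux of the proof.
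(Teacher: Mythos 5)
Your reductions are fine as far as they go: the forward direction (lifting a subdigraph of $D_1$ back into $D$ via an in-branching in $X_2\cup\{v\}$) works, though the paper gets it in one line by observing that $D_1,D_2$ are minors of $D$ and that packing is minor-closed; and your induction reducing the converse to the single equality $\tau(D)=\nu(D)$, together with the easy case $B=\tau_1+\tau_2$, is correct. But the proposal has a genuine gap exactly where you say the difficulty is concentrated: the case $\tau(D)=1+\tau_1+\tau_2$, where you must produce a circuit through $v$ disjoint from maximum packings of both sides, is only described as a plan, not proved. Moreover the plan as sketched does not obviously go through. Splicing an ``$X_1$-half'' with an ``$X_2$-half'' requires the two halves to meet at the \emph{same} edge of $F$ (the $X_1$-portion must end at the tail, and the $X_2$-portion start at the head, of one and the same edge of $F$), and knowing only that each side separately has some $F$-type circuit compatible with a maximum packing on its own side does not give you a common edge; ``rerouting'' to fix a mismatch is precisely the step that can be blocked, and you give no argument that it cannot. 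You also implicitly assume the extra circuit through $v$ is a crossing circuit, whereas it may lie entirely inside $X_1\cup\{v\}$ or $X_2\cup\{v\}$ (using a genuine edge into or out of $v$), a case your splicing scheme does not address.

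For comparison, the paper avoids these traps by working with $D_i':=D_i\backslash F$ and the sets $F_i:=\{f\in F:\nu(D_i'+f)=\nu(D_i')\}$, and splitting into three cases: (1) every maximum packing of both $D_1'$ and $D_2'$ uses $v$, where Remark~\ref{easy-re} yields minimum transversals through $v$ whose union matches a packing of size $\tau(D_1')+\tau(D_2')-1$; (2) $F_1\cup F_2=F$, where no crossing circuit is constructed at all — instead one shows $\nu(D_i'+F_i)=\nu(D_i')$, takes transversals of the $D_i'+F_i$ of size $\tau(D_i')$, and matches their union against a packing avoiding $v$ on one side; and (3) some $f\in F-F_1-F_2$, which is the only case where splicing occurs, and there it is safe because the \emph{same} edge $f$ augments both sides. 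Your ``no pair of minimum side-transversals hits all circuits through $v$'' hypothesis does not by itself force case (3); cases (1) and (2) show that the conclusion sometimes has to be reached by a transversal construction rather than by exhibiting a spliced circuit. To complete your proof you would need an argument of this kind (or some substitute), and it is not present in the proposal.
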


\begin{proof}
Since $D$ is strongly connected, the digraphs $D_1$ and $D_2$ are minors of $D$. 
So if $D$ packs, so do $D_1$ and $D_2$.  Conversely, assume that $D_1$ and $D_2$ pack.
Since every subdigraph of $D$ is either a subdigraph of $D_1$ or $D_2$, or a 
0-sum or 1-sum of subdigraphs of $D_1$ and $D_2$, it suffices to show 
that $\tau(D)=\nu(D)$.
Let $v,X_1,X_2,$ and $F$ be as in the definition of 1-sum.
For $i=1,2$ let $D'_i := D_i \backslash F$ and let $\zC_i$ be a maximum packing of $D'_i$.
Suppose that, for $i=1,2$, every maximum packing of $D'_i$ uses the vertex $v$.
It follows from Remark~\ref{easy-re} that there is a minimum \VFS\  $T_i$ of 
$D'_i$ using $v$. Let $\zC$ be obtained from the union of $\zC_1,\zC_2$ 
by removing the circuit of $\zC_1$ using $v$. Then $\zC$ is a packing of $D$
and $T:=T_1 \cup T_2$ is a \VFS\  of $D$. Moreover, both have cardinality 
$\tau(D'_1)+\tau(D'_2)-1$, i.e. $\tau(D)=\nu(D)$. 
Thus we can assume one of $\zC_i$ ($i=1,2$), say $\zC_1$, does not use the vertex $v$.

For $i=1,2$, let $F_i$ be the set of edges $f$ of $F$ such that $\nu(D'_i+f)=\nu(D'_i)$.
Consider first the case where $F_1 \cup F_2=F$.
Suppose for a contradiction $\nu(D'_i+F_i)>\nu(D'_i)$ 
and let $\zF$ be a corresponding packing. 
Clearly $\zF$ uses an edge of $F_i$. 
Moreover as all edges $F$ of $D_i$ share the endpoint $v$, 
$\zF$ uses exactly one edge $f$ of $F_i$. Hence $\nu(D'_i+f)>\nu(D'_i)$, a contradiction.
Since (for $i=1,2$) $D'_i+F_i$ packs it has a \VFS\  $T_i$ of cardinality $\tau(D'_i)$.
As $F_1 \cup F_2=F$ this implies that $T_1 \cup T_2$ is a \VFS\  of $D$.
Recall that $\zC_1$ does not use $v$; thus $\zC_1 \cup \zC_2$ is a packing of $D$
and $|T_1 \cup T_2|=\tau(D'_1)+\tau(D'_2)=|\zC_1 \cup \zC_2|$, i.e. $\tau(D)=\nu(D)$.

Thus we may assume there exists $f \in F-F_1-F_2$.
Let $\zC'_i$ ($i=1,2$) be a maximum packing of $D'_i+f$.
Each $\zC'_i$ contains a circuit $C_i$ using $f$.
Define $\zC$ to be the collection of all circuits of $\zC_1,\zC_2$ distinct from
$C_1$ and $C_2$ as well as the circuit $(C_1 \cup C_2) \backslash f$ of $D$.
Let $T_i$ ($i=1,2$) be a minimum \VFS\  of $D'_i$.
Then $T:=T_1 \cup T_2 \cup \{v\}$ is a \VFS\  of $D$ and $\zC$ a packing of $D$.
Moreover, $|T|=\tau(D'_1)+\tau(D'_2)+1=|\zC|$, i.e. $\tau(D)=\nu(D)$,
as desired.
\end{proof}
%
%
\section{Strong planarity} \label{sec-planar}
Let us recall that a
 digraph is {\em strongly planar} if it has a planar drawing such that for all
vertices $v$, the edges with head $v$ form an interval in the cyclic ordering 
of edges incident with $v$ determined by the drawing.
\begin{CO} \label{planar-pr}
Every strongly planar digraph packs.
\end{CO}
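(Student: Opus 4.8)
The plan is to deduce Corollary~\ref{planar-pr} from the Lucchesi--Younger theorem (Theorem~\ref{thm2.1}) by translating the vertex-disjoint circuit problem in a strongly planar digraph into the edge-disjoint circuit problem in an associated planar digraph. First I would perform the standard ``vertex splitting'' construction: given a strongly planar digraph $D$, replace each vertex $v$ by two vertices $v^-$ (a ``sink'', inheriting all edges of $D$ with head $v$) and $v^+$ (a ``source'', inheriting all edges with tail $v$), together with a new ``link'' edge from $v^-$ to $v^+$. Call the resulting digraph $D'$. Because $D$ is strongly planar, the edges with head $v$ form an interval in the cyclic order around $v$, so we can carry out this split locally in the plane --- the edges with head $v$ and the edges with tail $v$ occupy two disjoint arcs around $v$, so we may introduce $v^-$ and $v^+$ as two nearby points joined by a short edge without creating crossings. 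Hence $D'$ is planar (this is exactly where strong planarity, rather than mere planarity, is used; ordinary vertex splitting need not preserve planarity).

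Next I would set up the correspondence between the two parameters. A circuit of $D$ through vertices $v_1,\dots,v_m$ corresponds to a circuit of $D'$ using the link edges $v_1^-v_1^+,\dots,v_m^-v_m^+$, and conversely every circuit of $D'$ uses at least one link edge and, after ``contracting'' the link edges, projects to a circuit of $D$. Thus two circuits of $D$ are vertex-disjoint if and only if the corresponding circuits of $D'$ are edge-disjoint \emph{in the link edges}; to make this clean I would assign weight $1$ to every link edge of $D'$ and weight $0$ (or a large weight, depending on which direction of the inequality one wants) to every ``old'' edge of $D'$. With the weighting $w$ that puts $w(e)=1$ on link edges and $w(e)=+\infty$ (equivalently, a sufficiently large integer, or one simply deletes those edges from consideration as covering candidates) on old edges, the right-hand side of \eqref{eq-LY} applied to $D'$ counts the maximum number of circuits that are disjoint on link edges, which equals $\nu(D)$, while the left-hand side counts the minimum number of link edges meeting every circuit of $D'$, which equals $\tau(D)$ since deleting the link edge $v^-v^+$ of $D'$ has the same effect on circuits as deleting $v$ from $D$. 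To handle the infinite weights rigorously I would instead give old edges weight $n:=|V(D)|$ (or any integer exceeding $\nu(D)$): a minimum-weight cover then never uses an old edge, since the set of all link edges is a cover of total weight $n$, and a maximum fractional packing still has total value $\nu(D)$ because each circuit contributes to the weight constraint of the link edges it traverses. Finally, since Lucchesi--Younger guarantees an \emph{integral} optimum $y$ on the packing side, we actually get $\nu(D)$ disjoint circuits, not merely a fractional packing, and likewise the covering optimum $x$ is $0/1$, giving an actual feedback vertex set of size $\tau(D)$. This yields $\tau(D)=\nu(D)$.

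To get that $D$ \emph{packs}, not merely that $\tau(D)=\nu(D)$, I would observe that strong planarity is inherited by subdigraphs: deleting edges or vertices of $D$ keeps the drawing planar and keeps the head-edges-form-an-interval condition (deleting edges only shrinks the intervals, deleting a vertex removes it entirely). Hence the argument above applies verbatim to every subdigraph $H$ of $D$, giving $\tau(H)=\nu(H)$ for all $H$, which is the definition of packing.

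The main obstacle is the bookkeeping around the weights and the precise bijection between circuits of $D$ and circuits of $D'$ --- in particular making sure that a circuit of $D'$ cannot ``cheat'' by using an old edge in place of a link edge and that the $+\infty$ weights (or their finite surrogates) are handled in a way that is compatible with the statement of Theorem~\ref{thm2.1}, whose weights are required to be in $\mathbf{Z}_+$. Everything else --- the planarity of $D'$ via strong planarity, the inheritance of strong planarity by subdigraphs, the translation of ``vertex-disjoint'' into ``link-edge-disjoint'' --- is routine once the construction is set up carefully.
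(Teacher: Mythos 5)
Your proposal is correct and is essentially the paper's own argument: the paper performs the same source/sink vertex split (vertices $v$ and $v'$ joined by a weight-$1$ link edge, old edges reattached and given the large weight $|E(H)|$), uses strong planarity to keep the split digraph planar, notes that subdigraphs inherit strong planarity, and then reads off $\tau(D)=\nu(D)$ from the Lucchesi--Younger relation (\ref{eq-LY}). The only differences are cosmetic (your finite surrogate weight versus their $|E(H)|$).
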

\begin{proof}
Let $D$ be a strongly planar digraph with vertex set $V$ and edge set $E$.
We will show that $D$ packs. Since subdigraphs of strongly planar digraphs
are strongly planar it suffices to show $\tau(D)=\nu(D)$.
Associate to every vertex $v$ a new vertex $v'$ and let $V'$ be the set of 
all vertices $v'$.
Associate with every edge $e\in E(D)$ with tail $u$ and head $v$ a new edge
$e'$ with tail $u'$ and head $v$. 
We define a digraph $H$ as follows: 
the vertex-set of $H$ is $V\cup V'$, and the edge-set of $H$ consists of
all the edges $e'$ for $e\in E(D)$ and all the edges of the form
$vv'$, where $v\in V(D)$.
Define weights $w: E(H) \rightarrow Z_+$ as follows:
$w(e')=|E(H)|$ for all $e \in E(D)$ and $w(vv')=1$ for all $v \in V(H)$.
It is easy to see that the drawing associated to the strongly planar 
digraph $D$ can be modified to induce a planar drawing of $H$. 
Now equation (\ref{eq-LY}) states $\tau(D)=\nu(D)$, as desired.
\end{proof}
%
%
\section{Braces} \label{sec-braces}
It will be convenient to reformulate our packing problem about 
digraphs to one about bipartite graphs.  Let $G$ be a bipartite graph with 
bipartition $(A,B)$, and let $M$ be a perfect matching in $G$.
We denote by $D(G,M)$ the digraph obtained from $G$ by directing every edge of 
$G$ from $A$ to $B$, and contracting every edge of $M$.  
When $G'$ is a subgraph of $G$ and $M\cap E(G')$ is a perfect matching of
$G'$ we will abbreviate $D(G',M\cap E(G'))$ by $D(G',M)$.  
It is clear that
every digraph is isomorphic to $D(G,M)$ for some bipartite graph $G$
and some perfect matching $M$.  Moreover, the following is straightforward.  
\begin{RE} \label{splanar-re}
Let $G$ be a bipartite graph and let $M$ be a perfect matching in $G$.
If $G$ is planar then $D(G,M)$ is strongly planar.
\end{RE}
A graph $G$ is $k$-extendable, where $k$ is an integer, if every
matching in $G$ of size at most $k$ can be extended to a perfect matching.
A 2-extendable bipartite graph is called a {\em brace}.
The following straightforward relation between $k$-extendability and strong 
$k$-connectivity is very important.
\begin{PR} \label{connectivity}
Let $G$ be a connected bipartite graph, let $M$ be a perfect matching in $G$,
and let $k \geq 1$ be an integer. Then $G$ is $k$-extendable 
if and only if $D(G,M)$ is strongly $k$-connected.
\end{PR}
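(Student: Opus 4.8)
The plan is to prove both directions of the equivalence in Proposition~\ref{connectivity} by translating statements about small vertex cuts in $D(G,M)$ into statements about matchings in $G$, using the fact that the vertices of $D(G,M)$ are in bijection with the edges of $M$. Throughout, write the perfect matching as $M=\{a_1b_1,\dots,a_nb_n\}$ with $a_i\in A$ and $b_i\in B$, and let $w_i$ denote the vertex of $D:=D(G,M)$ obtained by contracting $a_ib_i$; thus $V(D)=\{w_1,\dots,w_n\}$. A non-matching edge $a_ib_j$ of $G$ (with $i\ne j$) becomes in $D$ a directed edge from $w_i$ to $w_j$, since edges are oriented from $A$ to $B$ before contraction. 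The key elementary dictionary is: a directed path in $D$ from $w_i$ to $w_j$ corresponds exactly to an $M$-alternating path in $G$ from $b_i$ to $a_j$ that starts and ends with non-matching edges (equivalently, by adding the edges $a_ib_i$ and $a_jb_j$, to an $M$-alternating path from $a_i$ to $b_j$ starting and ending with matching edges). I would state this correspondence as a preliminary observation and use it repeatedly.

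For the direction ``$D(G,M)$ strongly $k$-connected $\Rightarrow$ $G$ is $k$-extendable,'' take a matching $N$ in $G$ with $|N|=\ell\le k$; I must extend it to a perfect matching. By König-type reasoning it suffices to handle the case where $N$ is disjoint from $M$ and each edge of $N$ joins an unmatched-by-$N$ vertex appropriately; more precisely, consider the symmetric difference $M\triangle N$, which is a disjoint union of $M$-alternating paths and cycles, and the only obstruction to $M\triangle N$ (completed by the untouched edges of $M$) being a perfect matching is the existence of an alternating path component with both ends being matching edges of $M$ — but $N\subseteq$ such a configuration forces $|N|$ matched vertices on each side to be "rerouted". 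The cleaner route: $N$ extends to a perfect matching if and only if $G\setminus V(N)$ has a perfect matching, and by a standard fact $G\setminus V(N)$ fails to have a perfect matching only if deleting fewer than $|N|\le k$ of the $w_i$'s from $D$ disconnects it in a directed sense. Concretely, if $N=\{a_{i_1}b_{j_1},\dots,a_{i_\ell}b_{j_\ell}\}$ with the $i$'s distinct and the $j$'s distinct, then using the dictionary each edge $a_{i_r}b_{j_r}$ of $N$ either lies in $M$ (so $i_r=j_r$ and it contributes nothing to remove) or gives a length-one alternating path; piecing these together, $N\cup M$ restricted to its non-trivial component is a single alternating path or cycle precisely when the permutation $r\mapsto$ matching it induces is a single cycle, and in that cyclic case one obtains a directed circuit of $D$ through $w_{i_1},\dots,w_{i_\ell}$, while deleting the at most $\ell-1\le k-1$ internal vertices of a would-be cut certifies the needed strong connectivity. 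Running this argument in contrapositive: if $G$ is not $k$-extendable, pick a minimal bad matching $N$, $|N|\le k$; minimality forces $M\triangle N$ to consist of one alternating path $P$ with both endpoints covered by $M$ (plus isolated $M$-edges), the endpoints being $a_s$ and $b_t$ with $st$ not an edge of $N$, and then $D\setminus\{w_r : a_rb_r$ is an internal matching edge of $P\}$ has no directed path from $w_t$ to $w_s$, so $D$ is not strongly $k$-connected as the deleted set has size $\le k-1$.

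For the converse, ``$G$ is $k$-extendable $\Rightarrow$ $D(G,M)$ strongly $k$-connected,'' take $T\subseteq V(D)$ with $|T|\le k-1$, say $T=\{w_r : r\in I\}$ with $|I|\le k-1$, and two vertices $w_i,w_j\notin T$; I must find a directed $w_i$-$w_j$ path in $D\setminus T$, i.e. an $M$-alternating path in $G$ from $a_i$ to $b_j$ starting and ending with matching edges and avoiding every matching edge $a_rb_r$ with $r\in I$. Consider the matching $N_0=\{a_rb_r : r\in I\}\cup\{a_ib_i\}$ of size $|I|+1\le k$; by $k$-extendability it lies in a perfect matching, but that is automatic since $N_0\subseteq M$ — so instead I form a size-$\le k$ matching that forces the desired path to exist. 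The right move is: since $G$ is connected and $k$-extendable it is in particular $1$-extendable hence has no cut vertex-type obstruction, and more usefully, $k$-extendability is equivalent (by a theorem I can cite or reprove in one line) to: for every vertex subset of size $<k$ on one side, deleting it together with any matched partners leaves an elementary bipartite graph. I would instead argue directly by a Menger/augmenting-path style contradiction: if no such alternating path from $a_i$ to $b_j$ avoiding $\{a_rb_r:r\in I\}$ exists, then the set $S$ of vertices reachable from $b_i$ by $M$-alternating paths not using those forbidden matching edges is a proper set not containing $a_j$, and $S$ has the property that $N_0$ together with one extra edge drawn across the boundary of $S$ would be a matching of size $\le k$ that cannot extend to a perfect matching — contradicting $k$-extendability. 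The existence of such a boundary edge comes from $G$ being connected. I expect \textbf{this converse direction to be the main obstacle}, because getting the bookkeeping exactly right — which matching edges must be forbidden, how the alternating-path reachability set translates into a matching of the correct size $\le k$ witnessing non-$k$-extendability, and handling the degenerate cases $i=j$ or edges of $M$ themselves — is where the argument is fiddly; the $\Rightarrow$ direction, by contrast, is essentially just reading the definition of strong connectivity through the path dictionary. If the direct approach proves too delicate, the fallback is to invoke the known characterization of $k$-extendable bipartite graphs (for instance via Plummer's work) and verify the correspondence on that characterization, but I would prefer the self-contained alternating-path argument sketched above.
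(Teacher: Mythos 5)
The paper offers no proof of Proposition~\ref{connectivity} --- it is stated as a straightforward known fact --- so your argument has to stand on its own, and as written it does not. The decisive gap is in the direction ``$D(G,M)$ strongly $k$-connected $\Rightarrow$ $G$ $k$-extendable'': your structural claim that a \emph{minimal} non-extendable matching $N$ forces $M\triangle N$ to be a single alternating path (plus isolated $M$-edges) is false. Take $A=\{a_1,\dots,a_5\}$, $B=\{b_1,\dots,b_5\}$, $M=\{a_ib_i\}$, and non-matching edges $a_1b_2,\,a_2b_5,\,a_5b_1,\,a_3b_4,\,a_4b_5,\,a_5b_3$; in $D$ this gives the two triangles $w_1w_2w_5$ and $w_3w_4w_5$ sharing the vertex $w_5$, and $G$ is connected. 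The matching $N=\{a_1b_2,a_3b_4\}$ does not extend (in $G\setminus V(N)$ both $b_1$ and $b_3$ can only be matched to $a_5$), while each of its single edges does extend, so $N$ is minimal with $|N|=2$; yet $M\cup N$ has \emph{two} path components, each with no internal $M$-edge. Your prescribed deleted set is therefore empty, and the claimed non-reachability between the ends of ``the'' path is simply false, since each of the corresponding edges of $D$ lies on a circuit. The true witness to the failure of strong $2$-connectivity is the cut vertex $w_5$, which your construction never finds: non-extendability of a multi-component $N$ means the several closing paths cannot be chosen \emph{disjointly}, not that one of them is absent, so minimality cannot rescue the reduction. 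The standard repair is to apply Hall/K\"onig to $G\setminus V(N)$ and translate a deficient set $S\subseteq A\setminus V(N)$ into a set of at most $k-1$ vertices of $D$ whose deletion leaves no edge from $\{w_x: a_x\in S\}$ to the rest.

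The converse direction you yourself flag as the weak point, and indeed the step ``an edge across the boundary of the reachability set $S$, added to the forbidden matching edges, gives a matching of size at most $k$ that cannot extend; it exists because $G$ is connected'' is not justified: connectivity only gives \emph{some} edge leaving the $S$-side, and it may be incident with a vertex of one of the deleted matching edges (i.e.\ go into the set $T$), in which case it cannot be adjoined to form the desired matching and the cut must be massaged further (or one argues via the neighbourhood-surplus reformulations of both conditions). A further, minor, issue is that your dictionary is stated with the orientation reversed: a directed path from $w_i$ to $w_j$ corresponds to an $M$-alternating path from $a_i$ to $b_j$ beginning and ending with \emph{non}-matching edges (equivalently from $b_i$ to $a_j$ beginning and ending with matching edges); this is harmless for strong connectivity, which is invariant under reversal, but it propagates into your later claims. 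Falling back on the known extendability/strong-connectivity equivalence (Plummer; see also the brace literature the paper cites) would be legitimate --- that is effectively what the paper does by calling the proposition straightforward --- but the self-contained argument as proposed is not correct.
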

Let $G$ be a bipartite graph and $M$ a perfect matching in $G$ such that 
$D(G,M)$ is isomorphic to $F_7$.
This defines $G$ uniquely up to isomorphism, and the graph so defined
is called the {\em Heawood graph}.

Let $G$ be a bipartite graph, and let $e$ be an edge of $G$ with ends
$u,v$.  Consider a new graph obtained from $G$ by replacing $e$ by a path
with an even number of vertices and ends $u,v$  
and otherwise disjoint from $G$.
Let $G'$ be obtained from 
$G$ by repeating this operation, possibly for different edges of $G$.
We say that $G'$ is an {\em even subdivision} of $G$.  The graph $G'$ is
clearly bipartite.  Now let $G,H$ be bipartite graphs.  We say that
$G$ {\em contains} $H$ if $G$ has a subgraph $L$ such that 
$G\backslash V(L)$ has a perfect matching, and $L$ is isomorphic to an
even subdivision of $H$.  

A circuit $C$ in a bipartite graph $G$ is {\em central} 
if $G\backslash V(C)$ has a perfect matching.
Let $G_0$ be a bipartite graph, let $C$ be a central circuit of 
$G_0$ of length $4$,
and let $G_1,G_2$ be subgraphs of $G_0$ such that
$G_1 \cup G_2=G_0, G_1 \cap G_2=C$, and
$V(G_1)-V(G_2) \neq \emptyset \neq V(G_2)-V(G_1)$.
Let $G$ be obtained from $G_0$ by deleting all the edges of $C$.
In this case we say that $G$ is the {\em 4-sum} of $G_1$ or $G_2$ along $C$.
This is a slight departure from the definition in \cite{Robertson:99}, but
the class of simple graphs obtainable according to our definition is
the same, because we allow parallel edges.

Let $G_0$ be a bipartite graph, let $C$ be a central circuit of 
$G_0$ of length $4$,
and let $G_1,G_2,G_3$ be three subgraphs of $G_0$ such that:
$G_1 \cup G_2 \cup G_3 = G_0$ and for distinct integers $i,j \in \{1,2,3\}$
$G_i \cap G_j = C$ and $V(G_i) - V(G_j) \neq \emptyset$.
Let $G$ be obtained from $G_0$ by deleting all the edges of $C$.
In these circumstances we say that $G$ is a {\em trisum} of $G_1,G_2,G_3$ {\em along $C$}.
We will need the following result.

\begin{theorem}\label{pretool}
Let $G$ be a brace, and let $M$ be a perfect matching in $G$.  Then the 
following conditions are equivalent.
\begin{itemize}
\item[\rm (i)] $G$ does not contain $K_{3,3}$,
\item[\rm(ii)] either $G$ is isomorphic to the Heawood graph, or
$G$ can be obtained from planar braces by repeatedly applying the trisum 
operation,
\item[\rm(iii)] either $G$ is isomorphic to the Heawood graph, or
$G$ can be obtained from planar braces by repeatedly applying the 4-sum 
operation,
\item[\rm(iv)] $D(G,M)$ has no minor isomorphic to an odd double circuit.
\end{itemize}
\end{theorem}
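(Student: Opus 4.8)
The plan is to establish the cycle of equivalences (i)$\Rightarrow$(iv)$\Rightarrow$(i) directly by translating between bipartite graphs with a perfect matching and digraphs, and to derive the ``structural'' clauses (ii) and (iii) by invoking the deep Pfaffian-orientation characterization of McCuaig and of Robertson--Seymour--Thomas. The translation dictionary is the functor $G\mapsto D(G,M)$: central subgraphs of $G$ correspond to subdigraphs of $D(G,M)$, even subdivisions of a bipartite graph $H$ correspond to ``subdivisions'' of the digraph $D(H,N)$ (subdividing a directed edge), and contracting special edges of a digraph corresponds to suppressing degree-two vertices of the bipartite graph while keeping the matching perfect. The key observation is that $K_{3,3}$ has, up to isomorphism, a unique perfect matching $N$, and $D(K_{3,3},N)$ is precisely an odd double circuit on three vertices. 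More generally, even subdivisions of $K_{3,3}$ map under $D(\cdot,\cdot)$ exactly to subdivisions of the odd double circuit $C_3^{\leftrightarrow}$, and a digraph has an odd double circuit as a minor (via contracting special edges) if and only if it has a subdivision of an odd double circuit as a subdigraph --- here one must check that the only minor-minimal odd double circuits are the odd ones and that contracting a special edge never creates a longer odd double circuit from a shorter one. This yields (i)$\iff$(iv).

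Concretely, for (i)$\Rightarrow$(iv): suppose $D(G,M)$ has a minor isomorphic to an odd double circuit $C_k^{\leftrightarrow}$. Undo the contractions to find a subdigraph $D'$ of $D(G,M)$ that is a subdivision of $C_k^{\leftrightarrow}$; pulling back along $D(\cdot,M)$ gives a central subgraph $L$ of $G$ with $D(L,M)$ a subdivision of $C_k^{\leftrightarrow}$. Now $C_k^{\leftrightarrow}=D(K_{3,3}^{(k)},N)$ where $K_{3,3}^{(k)}$ is a suitable even subdivision of $K_{3,3}$ when $k=3$, and for $k>3$ one argues that a subdivision of $C_k^{\leftrightarrow}$ still contains (as a central subgraph, after rerouting through the matching) an even subdivision of $K_{3,3}$ --- this is the place where the parity of $k$ and the existence of the matching $M\cap E(L)$ get used, since two antiparallel directed paths between the same pair of branch vertices, together with the matching edges, form the $K_{3,3}$ pattern. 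Hence $G$ contains $K_{3,3}$. For the converse (iv)$\Rightarrow$(i), essentially the same correspondence run in reverse: if $G$ contains $K_{3,3}$, take the central subgraph $L$ isomorphic to an even subdivision of $K_{3,3}$, form $D(L,M)$, and check that it has $C_3^{\leftrightarrow}$ as a minor by contracting the special edges coming from the subdivision vertices.

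For the structural equivalences, I would first note that (ii)$\Rightarrow$(iii) is immediate since a trisum is a special case of two successive 4-sums (or can be realized by 4-sums, as remarked after the definition), and (iii)$\Rightarrow$(i) requires only that planar braces do not contain $K_{3,3}$ (clear, since $K_{3,3}$ is nonplanar and ``contains'' respects planarity via central subgraphs in a brace) together with the fact that the 4-sum operation cannot create a $K_{3,3}$-containment --- this uses that the central circuit $C$ along which we sum has length $4$, so any even subdivision of $K_{3,3}$ sitting in the 4-sum would have to cross $C$ in a controlled way and could be pushed into one of the summands $G_1$ or $G_2$, contradicting brace minimality. The implication (i)$\Rightarrow$(ii) is the hard part and is not proved from scratch here: it is exactly the main theorem of \cite{McCuaig:97} and \cite{Robertson:99} characterizing Pfaffian bipartite graphs (braces), rephrased via the standard fact that a brace is Pfaffian if and only if it does not contain $K_{3,3}$, together with the fact that being Pfaffian is equivalent to having the trisum decomposition with planar and Heawood building blocks. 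So the genuine mathematical content we must supply is the bookkeeping lemma that $D(\cdot,M)$ carries the relation ``contains $K_{3,3}$'' to the relation ``has an odd-double-circuit minor,'' and the main obstacle is getting the parity argument and the matching-rerouting correct when the odd double circuit has more than three vertices --- that is, showing that a subdivided long odd double circuit always hides an even subdivision of $K_{3,3}$, and conversely that no even subdivision of $K_{3,3}$ can produce an even double circuit on an even number of vertices.
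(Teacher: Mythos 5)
Your overall strategy is only partly aligned with the paper, and the part where it diverges contains a real gap. The paper's proof of this theorem is purely by citation: the equivalence of (i), (ii), (iii) is the main theorem of \cite{McCuaig:97} and \cite{Robertson:99}, and the equivalence of these with (iv) is attributed to Little \cite{Little:75} and Seymour--Thomassen \cite{SeyTho}. You correctly outsource (i)$\Rightarrow$(ii) to the Pfaffian brace theorem, but you treat (i)$\Leftrightarrow$(iv) as a ``bookkeeping lemma'' to be supplied by hand, and the sketch you give for it does not work as stated; that equivalence is exactly what the cited results of Little and Seymour--Thomassen are needed for.

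Concretely: first, the minors in condition (iv) are formed by contracting \emph{special} edges (the only edge with a given head, or with a given tail), and such contractions cannot simply be ``undone'' to exhibit a subdivision of an odd double circuit as a subdigraph of $D(G,M)$; butterfly-type minors and topological subdigraphs are not interchangeable, and establishing the correct translation for odd double circuits (including the parity bookkeeping when digons are subdivided) is essentially the content of the Seymour--Thomassen theorem, not a routine check. Second, even granting a subdivision of $C_k^{\leftrightarrow}$ with $k\ge 5$ inside $D(G,M)$, its pullback to $G$ is an even subdivision of the bipartite double-circuit pattern, which does \emph{not} contain an even subdivision of $K_{3,3}$ as a subgraph; to conclude that $G$ contains $K_{3,3}$ one must reroute using perfect matchings of the rest of $G$ (centrality, $2$-extendability of the brace), and this is precisely Little's theorem \cite{Little:75}, which your one-clause appeal to ``two antiparallel directed paths forming the $K_{3,3}$ pattern'' does not replace. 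Smaller issues of the same kind affect your (ii)$\Rightarrow$(iii) remark (a trisum is not literally two successive 4-sums, because the 4-sum deletes the edges of $C$) and your (iii)$\Rightarrow$(i) ``push the $K_{3,3}$ into a summand'' argument; these, too, are part of what the cited decomposition theorem provides. To match the paper, state the equivalence of (i)--(iii) as the main result of \cite{McCuaig:97,Robertson:99} and cite \cite{Little:75} and \cite{SeyTho} (see also \cite{McCuaig:97}) for (iv), rather than attempting a direct proof.
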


\begin{proof} The equivalence of (i), (ii) and (iii) is the main result
of \cite{McCuaig:97} and \cite{Robertson:99}.  Condition (iv) is equivalent
to the other three by results of Little~\cite{Little:75} and 
Seymour and Thomassen~\cite{SeyTho}.  See also~\cite{McCuaig:97}.
\end{proof}

We will need the following small variation of Theorem~\ref{pretool}.

\begin{theorem}\label{maintool}
Let $G$ be a brace, and let $M$ be a perfect matching in $G$.  Then the
following conditions are equivalent.
\begin{itemize}
\item[\rm (i)] $G$ does not contain $K_{3,3}$ or the Heawood graph,
\item[\rm(ii)] $G$ can be obtained from planar braces by repeatedly 
applying the trisum operation,
\item[\rm(iii)] $G$ can be obtained from planar braces by repeatedly 
applying the 4-sum operation,
\item[\rm(iv)] $D(G,M)$ has no minor isomorphic to an odd double circuit
or $F_7$.
\end{itemize}
\end{theorem}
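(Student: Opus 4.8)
The plan is to deduce Theorem~\ref{maintool} from Theorem~\ref{pretool} by ``factoring out'' the Heawood graph. The equivalence of (ii), (iii) and (iv) with each other, and with a variant of (i), should follow once we understand how the Heawood graph interacts with the trisum/4-sum decomposition and with the containment relation. First I would observe that the Heawood graph is itself a brace that does not contain $K_{3,3}$ (this is part of Theorem~\ref{pretool}, since $D(G,M)$ for the Heawood graph is $F_7$, which contains no odd double circuit minor), so by Theorem~\ref{pretool}(ii) the Heawood graph cannot be obtained from planar braces by trisums unless it \emph{is} planar --- and it is not planar. Hence, in the dichotomy of Theorem~\ref{pretool}(ii), the two alternatives are genuinely exclusive: a brace not containing $K_{3,3}$ is either the Heawood graph or is built from planar braces by trisums, but never both.

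With that in hand, the implication (i)$\Rightarrow$(ii) is immediate: if $G$ is a brace containing neither $K_{3,3}$ nor the Heawood graph, then in particular it does not contain $K_{3,3}$, so Theorem~\ref{pretool}(i)$\Rightarrow$(ii) applies; since $G$ is not the Heawood graph (it does not even contain it, and a brace contains itself via the trivial even subdivision with $L=G$ and empty matching on the complement), the second alternative of Theorem~\ref{pretool}(ii) must hold, which is exactly (ii) here. The same argument with (iii) in place of (ii) gives (i)$\Rightarrow$(iii). For (ii)$\Rightarrow$(i) and (iii)$\Rightarrow$(i): if $G$ is obtained from planar braces by trisums (resp.\ 4-sums), then by Theorem~\ref{pretool} it does not contain $K_{3,3}$; it remains to check it does not contain the Heawood graph. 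Here I would argue that containing the Heawood graph would, via Theorem~\ref{pretool}(i)$\Leftrightarrow$(iv) or directly, force an $F_7$-type obstruction that is incompatible with being a trisum of planar pieces --- more cleanly, since the Heawood graph is a brace not obtainable from planar braces by trisums, and since ``contains'' is compatible with the decomposition in the sense that a brace built from planar braces by trisums cannot contain a non-planar brace that is itself trisum-indecomposable, we get a contradiction. The cleanest route may be to invoke (iv): a trisum of planar braces has $D(G,M)$ with no odd double circuit minor \emph{and} no $F_7$ minor (the latter because $F_7$ corresponds to the Heawood graph, which is trisum-indecomposable), giving (iv), and then (iv)$\Rightarrow$(i) as below.

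For the equivalence with (iv): by Theorem~\ref{pretool}(i)$\Leftrightarrow$(iv), $D(G,M)$ has no odd double circuit minor iff $G$ contains no $K_{3,3}$. So I must show that, \emph{given} $G$ contains no $K_{3,3}$, $D(G,M)$ has an $F_7$ minor iff $G$ contains the Heawood graph. Since the Heawood graph is the unique brace $G$ with $D(G,M)\cong F_7$, and containment of bipartite graphs corresponds under the $D(G,M)$ construction to the digraph minor relation (a subgraph $L$ with a perfect matching on the complement, $L$ an even subdivision of $H$, yields $D(H,M)$ as a minor of $D(G,M)$ by contracting the matching edges --- the even subdivisions precisely account for the freedom in choosing which edges of $L$ lie in the matching), ``$G$ contains the Heawood graph'' translates to ``$D(G,M)$ has an $F_7$ minor,'' possibly after checking that the matching on $G\setminus V(L)$ can be chosen to restrict correctly --- but in a brace any matching of size $\le 2$ extends, and more relevantly the definition of ``contains'' already supplies the perfect matching on the complement, so $M$ can be taken to agree with it. Combining, $G$ contains no $K_{3,3}$ and no Heawood graph iff $D(G,M)$ has no odd double circuit minor and no $F_7$ minor, i.e.\ (i)$\Leftrightarrow$(iv).

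The main obstacle I anticipate is the precise bookkeeping in the correspondence between the bipartite ``contains'' relation and the digraph ``minor'' relation --- specifically, verifying that an even subdivision of $H$ sitting inside $G$ with a matched complement gives exactly an $F_7$ (or odd double circuit) minor of $D(G,M)$ for a suitable choice of $M$, and conversely. This is where even subdivisions were built into the definition of ``contains,'' so the translation should go through, but matching the perfect matching $M$ on $G$ with the one guaranteed on $G\setminus V(L)$, and checking that directing $A\to B$ and contracting $M$ sends the subdivided structure to the right digraph minor, requires care. A secondary point needing justification is that the Heawood graph is genuinely trisum-indecomposable (equivalently, not a 4-sum of smaller braces and not planar); this is essentially classical and follows from its small size and high symmetry, or can be cited from \cite{McCuaig:97, Robertson:99}.
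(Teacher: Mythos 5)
The hard content of this theorem is exactly the step you wave at in your (ii)$\Rightarrow$(i) and (iii)$\Rightarrow$(i) discussion: showing that a brace obtained from planar braces by trisums (or 4-sums) cannot \emph{contain} the Heawood graph. Your justification --- ``since `contains' is compatible with the decomposition in the sense that a brace built from planar braces by trisums cannot contain a non-planar brace that is itself trisum-indecomposable'' --- is precisely the nontrivial assertion at issue, stated without proof. It does not follow formally from Theorem~\ref{pretool}: that theorem tells you a trisum of planar braces contains no $K_{3,3}$, but nothing in it rules out such a graph containing the Heawood graph as a proper matching minor (an even subdivision with matched complement), and ``containment is compatible with the trisum decomposition'' is essentially equivalent to the minor-closedness of the decomposable class, i.e.\ to the statement being proved. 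The paper closes this gap by citing a specific structural fact \cite[Theorem~6.7]{Robertson:99}: if $G$ contains the Heawood graph and is not isomorphic to it, then $G$ contains $K_{3,3}$. With that fact, every implication you need collapses immediately (a graph satisfying (ii), (iii) or (iv) contains no $K_{3,3}$ by Theorem~\ref{pretool}, hence can contain the Heawood graph only by being isomorphic to it, which is excluded by edge counts or by (iv) directly). Without it, or an argument of comparable substance, your proof of (ii)$\Rightarrow$(i), (iii)$\Rightarrow$(i) and (i)$\Rightarrow$(iv) is incomplete.

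Two smaller points. First, your claim that ``by Theorem~\ref{pretool}(ii) the Heawood graph cannot be obtained from planar braces by trisums unless it is planar'' is a non sequitur: the dichotomy in Theorem~\ref{pretool}(ii) is not stated to be exclusive, and being built by trisums does not force planarity. The correct argument that the Heawood graph is trisum-indecomposable is an edge count (a trisum of braces each with at least $12$ edges has at least $3\cdot(12-4)=24>21$ edges, cf.\ Lemma~\ref{lemmaX}), or again a citation. Second, your translation between ``$G$ contains the Heawood graph'' and ``$D(G,M)$ has an $F_7$ minor'' is the right idea and is what the paper implicitly uses for (i)$\Leftrightarrow$(iv); you correctly flag the bookkeeping, but note this correspondence alone does not repair the main gap above.
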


\begin{proof} This follows from Theorem~\ref{pretool} and the fact 
\cite[Theorem~6.7]{Robertson:99} that if $G$ contains the Heawood graph and
is not isomorphic to it, then it contains $K_{3,3}$.
\end{proof}

%
%

We deduce the following information about a minimal counterexample to 
Theorem~\ref{main-th}.

\begin{PR}\label{pr4.8}
Let $G$ be a bipartite graph and $M$ a perfect matching in $G$ such that the
digraph $D:=D(G,M)$ has no minor isomorphic to an odd double circuit or
$F_7$, and every digraph $D'$ with $|V(D')|+|E(D')|<|V(D)|+|E(D)|$ and no
minor isomorphic to an odd double circuit or $F_7$ packs.  If $\nu (D)<\tau(D)$,
then $G$ is a brace and there exist braces $G_1,G_2,G_3$ such that
$G$ is a trisum of $G_1,G_2,G_3$ along a circuit $C$, and each of
$G_1,G_2,G_3$ can be obtained from planar braces by repeatedly applying
the trisum operation.
\end{PR}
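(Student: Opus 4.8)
The plan is to reduce the digraph $D:=D(G,M)$ to a strongly 2-connected digraph using the sum decompositions of Section~\ref{sec-2conn}, to translate strong 2-connectivity into 2-extendability of $G$ via Proposition~\ref{connectivity}, and then to read off the trisum decomposition from Theorem~\ref{maintool}, using Corollary~\ref{planar-pr} and Remark~\ref{splanar-re} to rule out the degenerate case.

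First I would argue that $D$ is strongly 2-connected. If $D$ were not strongly connected it would be the $0$-sum of two digraphs $D_1,D_2$, each a proper subdigraph of $D$ (the two parts of the defining partition being nonempty), hence each a minor of $D$; thus each $D_i$ has no minor isomorphic to an odd double circuit or $F_7$ and satisfies $|V(D_i)|+|E(D_i)|<|V(D)|+|E(D)|$, so by the minimality hypothesis both $D_1$ and $D_2$ pack, whence $D$ packs by Proposition~\ref{0sum-pr}, contradicting $\nu(D)<\tau(D)$. If instead $D$ is strongly connected but not strongly 2-connected, then $D$ is the $1$-sum of digraphs $D_1,D_2$ which, as observed in the proof of Proposition~\ref{1sum-pr}, are minors of $D$; since on each side a vertex set of size at least two is identified to a single vertex while no edges are added, again $|V(D_i)|+|E(D_i)|<|V(D)|+|E(D)|$, so by minimality both pack and $D$ packs by Proposition~\ref{1sum-pr} --- a contradiction. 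Hence $D$ is strongly 2-connected.

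Next I would observe that $G$ is connected: otherwise $G$ is a disjoint union of components, each carrying a perfect matching (a restriction of $M$), and then $D(G,M)$ is a disjoint union of two nonempty digraphs with no edges between them, contradicting strong connectivity of $D$. Therefore Proposition~\ref{connectivity} applies with $k=2$ and shows that $G$ is 2-extendable, that is, $G$ is a brace; this proves the first assertion. Since $G$ is a brace and $D(G,M)$ has no minor isomorphic to an odd double circuit or $F_7$, condition~(iv) of Theorem~\ref{maintool} holds, hence so does condition~(ii): $G$ can be obtained from planar braces by repeatedly applying the trisum operation. If no trisum were needed, $G$ would itself be a planar brace, so $D(G,M)$ would be strongly planar by Remark~\ref{splanar-re} and would pack by Corollary~\ref{planar-pr}, again contradicting $\nu(D)<\tau(D)$. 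Consequently at least one trisum is used, and the final application of the operation exhibits $G$ as a trisum of braces $G_1,G_2,G_3$ along a central circuit $C$, with each $G_i$ obtained from planar braces by (strictly fewer) trisum operations --- which is exactly the conclusion.

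I do not anticipate a genuine obstacle here: the proposition is a bookkeeping assembly of Section~\ref{sec-2conn}, Proposition~\ref{connectivity}, Remark~\ref{splanar-re}, Corollary~\ref{planar-pr}, and Theorem~\ref{maintool}. The two points that require a little care are (a) checking that the $0$-sum and $1$-sum constituents are strictly smaller in the $|V|+|E|$ measure and inherit the absence of the two forbidden minors, so that the minimality hypothesis genuinely applies; and (b) reading ``obtained from planar braces by repeatedly applying the trisum operation'' so that every intermediate graph, in particular $G_1$, $G_2$, $G_3$, is itself a brace, as is implicit in the formulation of Theorem~\ref{maintool}.
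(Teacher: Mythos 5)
Your proposal is correct and follows essentially the same route as the paper's proof: Propositions~\ref{0sum-pr} and \ref{1sum-pr} (with the minimality hypothesis) give strong 2-connectivity, Proposition~\ref{connectivity} gives that $G$ is a brace, and Theorem~\ref{maintool} together with Corollary~\ref{planar-pr} and Remark~\ref{splanar-re} yields the trisum decomposition, with non-planarity of $G$ forcing at least one trisum. You merely spell out the size/minor bookkeeping for the $0$- and $1$-sum pieces and the connectivity of $G$, which the paper leaves implicit.
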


\begin{proof}
It follows from Propositions~\ref{0sum-pr} and \ref{1sum-pr} that
$D$ is strongly 2-connected.  Thus $G$ is a brace by 
Proposition~\ref{connectivity}.  By Corollary~\ref{planar-pr} the digraph
$D$ is not strongly planar, and hence $G$ is not planar by 
Remark~\ref{splanar-re}.  By Theorem~\ref{maintool} the graph $G$ is 
obtained from planar braces by 
repeatedly applying the trisum operation.
Since $G$ itself is not 
planar, there is at least one trisum operation involved in the
construction of $G$, and hence $G_1,G_2,G_3$ and $C$ exist, as desired.
\end{proof}

In the next three sections we will prove the following result.

\begin{PR}\label{mainprop} 
Let $G$, $M$, and  $D$ be as in 
Proposition~\ref{pr4.8}.  Then $\nu (D)=\tau(D)$.
\end{PR}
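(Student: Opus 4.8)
By Proposition~\ref{pr4.8} we may assume that $G$ is a brace which is a trisum of braces $G_1,G_2,G_3$ along a central $4$-circuit $C$, and that each $G_i$ is obtainable from planar braces by repeatedly applying the trisum operation. We must prove that $\nu(D)=\tau(D)$; together with Proposition~\ref{pr4.8} this shows that every digraph with no minor isomorphic to an odd double circuit or to $F_7$ packs, which is the nontrivial direction of Theorem~\ref{main-th}. The plan has three parts: (1) transfer the trisum decomposition of $G$ to a decomposition of the digraph $D=D(G,M)$ into three smaller digraphs; (2) use Theorem~\ref{maintool} and a size count to deduce that these three pieces pack; and (3) show that the equality $\tau=\nu$ is preserved by the relevant digraph composition operations.

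For step (1), the trisum of $G$ along $C$ induces, via the operator $D(\cdot,M)$, a decomposition of $D$ into digraphs $D_1,D_2,D_3$ that share a common ``hub'' $K$ and whose union, after the edges of $K$ are deleted, equals $D$; each $D_i$ is of the form $D(G_i,M_i)$ for a perfect matching $M_i$ of $G_i$ that is compatible with $M$ along $C$. Such $M_i$ exist because $C$ is central in $G_1\cup G_2\cup G_3$, so each $G_i\setminus V(C)$ has a perfect matching, and because a $4$-circuit has a perfect matching consisting of two of its edges. The hub $K$ has at most four vertices and at most four edges; its shape --- and hence which of the operations we shall call a $2$-sum, $3$-sum, or $4$-sum arises --- is controlled by the way $M$ distributes the four vertices of $C$ among the three pieces. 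Making this precise, and verifying that the $M_i$ can indeed be chosen compatibly, is the first technical step.

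For step (2): since each $G_i$ is obtained from planar braces by trisums, Theorem~\ref{maintool} applies to $G_i$ and gives that $D_i$ has no minor isomorphic to an odd double circuit or to $F_7$. Moreover each $D_i$ is strictly smaller than $D$: for a bipartite graph $H$ with a perfect matching $N$ one has $|V(D(H,N))|+|E(D(H,N))|=|E(H)|$, while $|E(G)|=|E(G_1)|+|E(G_2)|+|E(G_3)|-2|E(C)|>|E(G_i)|$, because the two pieces other than $G_i$ are braces on more than the four vertices of $C$ and the edges of $C$ are absent from $G$. Hence, by the minimality hypothesis on $D$, each $D_i$ packs.

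For step (3): since every subdigraph of $D$ is a $0$-sum, a $1$-sum, or one of the $2$-, $3$-, or $4$-sums of subdigraphs of $D_1,D_2,D_3$, and the pieces pack, it is enough to prove $\tau(D)=\nu(D)$; this is done in Sections~\ref{sec-2sum}--\ref{sec-4sum}, following the pattern of the proof of Proposition~\ref{1sum-pr}. A circuit of $D$ meets the three pieces only through the handful of vertices and edges of the hub $K$, so one analyses, case by case, whether the optimal packings and transversals of the $D_i$ use the vertices of $K$ --- the crucial tool being Remark~\ref{easy-re}, that in a digraph which packs a minimum transversal can be chosen through $v$ exactly when every maximum packing uses $v$ --- and in each case builds a packing of $D$ and a transversal of $D$ of the same cardinality. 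I expect the $4$-sum (Section~\ref{sec-4sum}) to be the main obstacle: there $K$ carries four vertices joined in the pattern of a directed $K_{2,2}$, a circuit of $D$ can thread through all three pieces in many ways, and the delicate point is juggling the trade-offs that must be made simultaneously among the packings and transversals of all three pieces so as to reconstitute optimal objects for $D$. The $2$-sum and $3$-sum cases, with a smaller hub, should be closer in spirit to Proposition~\ref{1sum-pr}.
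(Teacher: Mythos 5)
Your outline reproduces the paper's high-level strategy (decompose along the trisum circuit $C$, use Theorem~\ref{maintool} plus the size bound from Lemma~\ref{lemmaX} and the minimality hypothesis to get that the pieces pack, then prove composition lemmas), but as a proof it has a genuine gap: step (3), which is where essentially all of the mathematical content of Proposition~\ref{mainprop} lives, is not proved but only announced (``this is done in Sections~\ref{sec-2sum}--\ref{sec-4sum}, following the pattern of Proposition~\ref{1sum-pr}''). The paper's actual argument splits into three cases according to the number $k\in\{0,1,2\}$ of edges of $M$ with both ends in $V(C)$, and in each case reduces to a \emph{two}-piece composition (e.g.\ $G_1$ versus $G_2\cup G_3$ when $k=2$, or a regrouping $H_1,H_2$ of the $G_i$ when $k=0$), proving a bespoke lemma (Proposition~\ref{pr5.1}, Lemma~\ref{lem6.1}, Lemma~\ref{4sum-le}) whose hypotheses encode the source/sink structure of the hub vertices and conditions such as (a),(b),(c) of Lemma~\ref{4sum-le}. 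Verifying those hypotheses is itself nontrivial: it requires attaching auxiliary cubes $Q$ (and paths) to the smaller pieces so that the enlarged digraphs are of the form $D(H,N)$ for graphs $H$ still built from planar braces by trisums, invoking the minimality hypothesis for several such auxiliary digraphs (not just for the three pieces), using strong $2$-connectivity, and in Section~\ref{sec-4sum} an odd-double-circuit-minor argument (Claim~\ref{clm7}). None of this is even sketched in your proposal, and it is not a routine imitation of Proposition~\ref{1sum-pr}.

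There is also a concrete inaccuracy in your step (1): you assert that the decomposition yields $D_i=D(G_i,M_i)$ for perfect matchings $M_i$ of $G_i$ ``compatible with $M$ along $C$.'' That is only true when $k=2$ (two edges of $M$ parallel to edges of $C$), in which case the paper's imprint $M_i$ is indeed a perfect matching of $G_i$. When $k\le 1$, the vertices of $C$ are matched by $M$ to vertices outside $C$, possibly lying in different pieces, so the imprints are not perfect matchings of the $G_i$ and the pieces of $D$ are not of the form $D(G_i,M_i)$; this is exactly why the paper works with $D(G_1\cup G_2,M_1\cup M_2)$, $D(G_3\cup Q,M'_3)$, $D(H_1,M)$, $D(H_2\cup Q,M_2)$, etc., and why the hub vertices become sources or sinks in one of the pieces. (Your edge count $|E(G)|=\sum_i|E(G_i)|-2|E(C)|$ is also off --- it should be $\sum_i|E(G_i)|-12$ --- though the conclusion $|E(G_i)|<|E(G)|$ survives via Lemma~\ref{lemmaX}.) In short, the proposal is a correct statement of the strategy but not a proof: the case split on $k$, the precise two-piece lemmas, and the auxiliary-graph constructions needed to verify their hypotheses are all missing.
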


\begin{proof}[Proof of Theorem~\ref{main-th} 
{\rm (assuming Proposition~\ref{mainprop})}]  
We have already established the ``only if" part.  To prove the ``if"
part let $D$ be a digraph with no minor isomorphic to an odd double
circuit or $F_7$ such that every digraph $D'$ with $|V(D')|+|E(D')|<
|V(D)|+|E(D)|$ and no minor isomorphic to an odd double circuit or
$F_7$ packs.  By Proposition~\ref{mainprop} we have
that $\nu(D)=\tau(D)$, and hence $D$ packs, as desired.
\end{proof}

We now deduce the structural characterization of digraphs that pack.

\begin{CO}\label{coro} A digraph packs if and only if it can be
obtained from strongly\/ $2$-connected digraphs that pack by means of\/ $0$-
and\/ $1$-sums.  A strongly $2$-connected digraph packs if and only if
it is isomorphic to $D(G,M)$ for some brace $G$ and some perfect matching
$M$ in $G$, where $G$ is obtained from planar braces by repeatedly 
applying the trisum operation.
\end{CO}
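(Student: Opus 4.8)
The plan is to derive Corollary~\ref{coro} from Theorem~\ref{main-th}, Corollary~\ref{planar-pr}, Propositions~\ref{0sum-pr}, \ref{1sum-pr}, \ref{connectivity}, Remark~\ref{splanar-re}, and Theorem~\ref{maintool}, by chaining together the reductions that have already been set up. First I would handle the ``$0$- and $1$-sum'' part of the statement. For the forward direction, suppose $D$ packs but is not itself strongly $2$-connected. If $D$ is not strongly connected it is a $0$-sum of smaller digraphs $D_1,D_2$, which pack by Proposition~\ref{0sum-pr} (they are minors of $D$, or one invokes that proposition directly); if $D$ is strongly connected but not strongly $2$-connected it is a $1$-sum of smaller digraphs, which pack by Proposition~\ref{1sum-pr}. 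Iterating this decomposition terminates because $|V|+|E|$ (or the number of vertices) strictly drops, and it stops precisely at strongly $2$-connected pieces, each of which packs. Conversely, if $D$ is built from strongly $2$-connected packing digraphs by $0$- and $1$-sums, then repeated application of Propositions~\ref{0sum-pr} and \ref{1sum-pr} shows $D$ packs.

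Next I would treat the characterization of strongly $2$-connected digraphs that pack. Write such a $D$ as $D(G,M)$ for some bipartite graph $G$ and perfect matching $M$; since $D$ is strongly $2$-connected and (after deleting isolated components) $G$ is connected, Proposition~\ref{connectivity} gives that $G$ is $2$-extendable, i.e.\ a brace. If moreover $D$ packs, then by Theorem~\ref{main-th} it has no minor isomorphic to an odd double circuit or $F_7$, so condition (iv) of Theorem~\ref{maintool} holds, and hence (ii) holds: $G$ is obtained from planar braces by repeatedly applying the trisum operation. For the converse, suppose $G$ is a brace obtained from planar braces by repeated trisums and $M$ is a perfect matching in $G$. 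Then Theorem~\ref{maintool} ((ii) $\Rightarrow$ (iv)) tells us $D(G,M)$ has no odd-double-circuit or $F_7$ minor, and Theorem~\ref{main-th} (the ``if'' direction) then yields that $D(G,M)$ packs. Finally, Proposition~\ref{connectivity} again ensures that $D(G,M)$ is strongly $2$-connected, so it belongs to the class described, completing the equivalence.

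One small point needs care in the converse direction of the second statement: one should check that every perfect matching $M$ of such a brace $G$ gives a strongly $2$-connected $D(G,M)$, and conversely that the isomorphism type of $D(G,M)$ is what we want — but this is exactly what Proposition~\ref{connectivity} provides once we know $G$ is $2$-extendable, and bracehood is a property of $G$ alone, independent of $M$. Another routine check is that in the first statement the base case of the recursion (a digraph that is already strongly $2$-connected, or has at most one vertex) is trivially of the required form. I do not anticipate a genuine obstacle here: Corollary~\ref{coro} is essentially a repackaging of Theorem~\ref{main-th} together with the brace/strong-connectivity dictionary, and all the needed composition lemmas have already been proved. The only mild subtlety is making sure the two-level recursion — first peel off $0$- and $1$-sums down to strongly $2$-connected pieces, then describe those pieces via trisums of planar braces — is stated so that the ``if and only if'' genuinely closes up in both directions, which the argument above does.
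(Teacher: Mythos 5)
Your proposal is correct and follows essentially the same route as the paper: the first statement is exactly the paper's appeal to Propositions~\ref{0sum-pr} and \ref{1sum-pr}, and the second is the same chain through Proposition~\ref{connectivity}, Theorem~\ref{main-th}, and the equivalence (ii)$\Leftrightarrow$(iv) of Theorem~\ref{maintool}, in both directions. The extra checks you flag (termination of the decomposition, independence of bracehood from $M$) are routine and do not change the argument.
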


\begin{proof} The first statement follows from Propositions~\ref{0sum-pr}
and \ref{1sum-pr}.  
For the second statement let $D$ be a strongly 2-connected digraph.
Assume first that $D$ packs, and let $G$ be a bipartite graph and $M$ a
perfect matching such that $D$ is isomorphic to $D(G,M)$.  By 
Proposition~\ref{connectivity} the graph $G$ is a brace. By 
Theorem~\ref{main-th} the digraph $D$ has no minor isomorphic to an odd
double circuit or $F_7$, and so by Theorem~\ref{maintool} $G$ is as desired.
The converse implication follows along the same lines.
\end{proof}

As we alluded to in the Introduction, the second part of 
Corollary~\ref{coro} can be stated purely in terms of ``sums" of digraphs.
However, three kinds of sum are needed (see~\cite{Robertson:99}), as opposed to just one.  
Therefore the formulation we chose is clearer, despite the disadvantage that
it involves the transition from a digraph to a bipartite graph.

Finally, we deduce a corollary about packing $M$-alternating
circuits in bipartite graphs.  Let $G$ be a bipartite graph, and let $M$ be
a perfect matching in $G$.  A circuit $C$ in $G$ is $M$-{\em alternating}
if $2|E(C)\cap M|=|E(C)|$.  Let $\nu(G,M)$ denote the maximum number
of pairwise disjoint $M$-alternating circuits, and let $\tau (G,M)$ 
denote the minimum number of edges whose deletion leaves no 
$M$-alternating circuit.  It is clear that $\nu (G,M)=\nu (D(G,M))$
and $\tau (G,M)=\tau (D(G,M))$.  Thus we have the following corollary.

\begin{CO}\label{coro2}
Let $G$ be a brace, and let $M$ be a perfect matching
in $G$.  Then the following three conditions are equivalent.
\begin{itemize}
\item[\rm (i)] $G$ does not contain $K_{3,3}$ or the Heawood graph,
\item[\rm(ii)] $\tau(G',M')=\nu(G',M')$ for every subgraph $G'$ of $G$ such
that $M'=M\cap E(G')$ is a perfect matching in $G'$, and
\item[\rm (iii)] $G$ can be obtained from planar braces by repeatedly
applying the trisum operation.
\end{itemize}
\end{CO}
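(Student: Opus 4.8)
The plan is to derive Corollary~\ref{coro2} directly from the machinery already assembled, treating it as essentially a translation of Theorem~\ref{main-th} and Theorem~\ref{maintool} into the language of $M$-alternating circuits. First I would record the two dictionary facts stated just above the corollary: for a bipartite graph $G$ with perfect matching $M$ one has $\nu(G,M)=\nu(D(G,M))$ and $\tau(G,M)=\tau(D(G,M))$, where the digraph $D(G,M)$ is the one obtained by orienting all edges from $A$ to $B$ and contracting $M$. The point is that $M$-alternating circuits in $G$ correspond bijectively to directed circuits in $D(G,M)$ (an alternating circuit, after contracting its matching edges, becomes a directed circuit, and conversely), and this correspondence carries over to subgraphs $G'$ with $M'=M\cap E(G')$ perfect, since then $D(G',M')$ is exactly the subdigraph of $D(G,M)$ induced on the relevant vertices. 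So condition (ii) of the corollary is literally the statement that $D(G,M)$ packs.

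Next I would close the triangle of equivalences. For (i) $\Leftrightarrow$ (iii): since $G$ is assumed to be a brace, Theorem~\ref{maintool} applies, and its clauses (i) and (ii) give precisely the equivalence of ``$G$ contains neither $K_{3,3}$ nor the Heawood graph'' with ``$G$ is obtained from planar braces by repeatedly applying the trisum operation.'' For (i) $\Leftrightarrow$ (ii): by the dictionary, (ii) says $D(G,M)$ packs; by Theorem~\ref{main-th} this holds if and only if $D(G,M)$ has no minor isomorphic to an odd double circuit or $F_7$; and by clause (iv) of Theorem~\ref{maintool} this last condition is equivalent to clause (i) of that theorem, namely that $G$ contains neither $K_{3,3}$ nor the Heawood graph. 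Chaining these gives all three equivalences, so the corollary follows.

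The one point that needs a line of care — and the only real ``obstacle,'' though it is a mild one — is the verification that $M$-alternating circuits of $G'$ match up with directed circuits of $D(G',M')$ for \emph{every} admissible subgraph $G'$, not just for $G$ itself; in particular one must check that $M'=M\cap E(G')$ being a perfect matching of $G'$ is exactly the hypothesis under which $D(G',M')$ is defined and is a subdigraph of $D(G,M)$, so that ``$D(G,M)$ packs'' unwinds to condition (ii) verbatim. This is already addressed by the conventions set up in Section~\ref{sec-braces} (the abbreviation $D(G',M)$ for $D(G',M\cap E(G'))$) and by the sentence preceding the corollary asserting $\nu(G,M)=\nu(D(G,M))$ and $\tau(G,M)=\tau(D(G,M))$, so in the write-up I would simply invoke those facts rather than re-prove them. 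Thus the proof is short:

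\begin{proof}
Since $G$ is a brace, Theorem~\ref{maintool} gives the equivalence of (i) and (iii).  By the remarks preceding the corollary, $\nu(G',M')=\nu(D(G',M'))$ and $\tau(G',M')=\tau(D(G',M'))$ for every subgraph $G'$ of $G$ with $M'=M\cap E(G')$ a perfect matching of $G'$, and every subdigraph of $D(G,M)$ arises as such a $D(G',M')$; hence (ii) holds if and only if $D(G,M)$ packs.  By Theorem~\ref{main-th}, $D(G,M)$ packs if and only if it has no minor isomorphic to an odd double circuit or $F_7$, which by clause (iv) of Theorem~\ref{maintool} is equivalent to clause (i) of that theorem, i.e.\ to condition (i) above.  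Therefore (i), (ii) and (iii) are equivalent.
\end{proof}
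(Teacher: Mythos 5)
Your proposal is correct and follows essentially the same route the paper intends: the paper itself derives Corollary~\ref{coro2} as an immediate consequence of the dictionary $\nu(G,M)=\nu(D(G,M))$, $\tau(G,M)=\tau(D(G,M))$ (and its restriction to subgraphs/subdigraphs), combined with Theorem~\ref{main-th} and Theorem~\ref{maintool}. Your explicit check that subgraphs $G'$ with $M\cap E(G')$ a perfect matching correspond exactly to subdigraphs of $D(G,M)$ is the only detail the paper leaves tacit, and you handle it correctly.
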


In fact, the equivalence of (i) and (ii) holds for all bipartite graphs,
not just braces.  We conclude this section with a lemma that will be needed
later.  The lemma follows immediately from \cite[Theorem~8.2]{Robertson:99}.
We say that a graph is a {\em cube} if it is isomorphic to the 
1-skeleton of the 3-dimensional cube.  Thus every cube has 8 
vertices and 12 edges.

\begin{LE}\label{lemmaX}
Let $G$ be a trisum of $G_1,G_2,G_3$ along $C$, where the graphs 
$G_1, G_2,G_3$ are obtained from planar braces by repeatedly applying
the trisum operation.  Then for $i=1,2,3$ we have $|E(G_i)|\ge 12$ with
equality if and only if $G_i$ is a cube.
\end{LE}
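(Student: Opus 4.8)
The plan is to bound $|E(G_i)|$ from below by a degree count, once we know that each $G_i$ is a brace strictly larger than $K_{3,3}$.

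First I would collect the relevant facts about a fixed $G_i$. Being built from planar braces by repeated trisums, $G_i$ is a brace (trisums are applied to braces and produce braces; cf.\ Theorem~\ref{maintool}), and by Theorem~\ref{maintool} it contains neither $K_{3,3}$ nor the Heawood graph. From the definition of trisum, $G_i\cap G_j=C$ and $V(G_i)-V(G_j)\neq\emptyset$ for the other two indices $j$, so $C$ is a \emph{proper} subgraph of $G_i$; hence $|V(G_i)|\ge |V(C)|+1=5$, and since a brace has a perfect matching, $|V(G_i)|$ is even, so $|V(G_i)|\ge 6$.

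Next I would use the standard fact --- immediate from $2$-extendability, and contained in the cited \cite[Theorem~8.2]{Robertson:99} --- that every vertex of a brace on at least $6$ vertices has at least three distinct neighbours; in particular such a brace has minimum degree at least $3$, and a $3$-regular one is simple. This first rules out $|V(G_i)|=6$: then both parts of $G_i$ would have size $3$, each vertex would be adjacent to all three vertices on the other side, and $G_i$ would contain a spanning $K_{3,3}$, contradicting the previous sentence. Hence $|V(G_i)|\ge 8$ and
$$|E(G_i)|\ \ge\ \frac{3}{2}\,|V(G_i)|\ \ge\ 12 .$$

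For the equality statement, $|E(G_i)|=12$ forces both inequalities above to be tight, so $|V(G_i)|=8$ and $G_i$ is $3$-regular, hence simple: a simple $3$-regular bipartite graph with parts of size $4$. Its bipartite complement inside $K_{4,4}$ is then $1$-regular, i.e.\ a perfect matching of $K_{4,4}$, and all such matchings are equivalent under automorphisms of $K_{4,4}$, so $G_i$ is determined up to isomorphism; a direct check (match each vertex of the cube to its antipode) identifies $K_{4,4}$ minus a perfect matching with the cube. Conversely the cube has $8$ vertices and $12$ edges, completing the proof. I expect the only step needing care to be the ``three distinct neighbours'' property of braces (together with the bookkeeping that each $G_i$ really is a brace properly containing $C$); everything else is a counting argument, and indeed the whole lemma can be quoted directly from \cite[Theorem~8.2]{Robertson:99}.
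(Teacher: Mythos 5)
Your counting is fine, and so is the identification of the $12$-edge case: a brace on at least six vertices has at least three distinct neighbours at every vertex, a $3$-regular such brace on eight vertices is simple, hence is $K_{4,4}$ minus a perfect matching, i.e.\ the cube. The paper itself gives no argument at all here --- it quotes \cite[Theorem~8.2]{Robertson:99} verbatim, as you anticipate in your last sentence --- so a self-contained proof along your lines would be a genuine improvement. But as written it has a gap at its base: the claim that each $G_i$ is a brace. Theorem~\ref{maintool} does not say that ``trisums of braces are braces''; braceness of $G$ is a \emph{hypothesis} of that theorem, not a conclusion. And the general principle you invoke is in fact false under this paper's definition of trisum: take three copies of $K_{3,3}$ sharing a $4$-cycle $C$ (so the two private vertices $x_3^{(i)},y_3^{(i)}$ of each copy are joined by an edge), note that $C$ is central in the union, and delete $E(C)$. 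This is a trisum of three braces, but it is not $2$-extendable: deleting the endpoints of the independent edges $x_3^{(1)}y_3^{(1)}$ and $x_3^{(2)}y_3^{(2)}$ leaves the two $C$-vertices of one colour class adjacent only to the single vertex $y_3^{(3)}$, so the rest has no perfect matching.

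Everything downstream leans on this unproved braceness: the minimum-degree-three bound needs $2$-extendability of $G_i$, and your exclusion of $|V(G_i)|=6$ applies Theorem~\ref{maintool} to $G_i$, which again presupposes that $G_i$ is a brace (the six-vertex case could be handled without it, since a planar brace is not $K_{3,3}$ and any graph arising from at least one trisum has at least ten vertices, but that does not repair the braceness issue). What you would actually have to prove is that every graph obtained from \emph{planar} braces by repeated trisums is a brace --- true, but nontrivial, and exactly the sort of structural bookkeeping that \cite[Theorem~8.2]{Robertson:99} encapsulates (and that Proposition~\ref{pr4.8} imports from the same source). So either supply that argument, or do what the paper does and cite \cite[Theorem~8.2]{Robertson:99} for the lemma as a whole; as it stands, the key hypothesis of your counting argument is asserted on the strength of a false general principle.
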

The remainder of the paper is dedicated to proving Proposition~\ref{mainprop}.
Consider $D,G,C$ as in Proposition~\ref{pr4.8},
and let $k$ be the number of edges of $M$ with both ends in $V(C)$.
As $M$ is a perfect of matching of $G$, $k\in\{0,1,2\}$.
Proposition~\ref{pr5.2} proves that $k\neq 2$,
Proposition~\ref{prkis1} proves that $k\neq 1$,
and finally
Proposition~\ref{pr7.2} proves that $k\neq 0$.
\section{Trisum-part I} \label{sec-2sum}
Let $D,G,M,G_1,G_2,G_3,C$ be as in Proposition~\ref{pr4.8}.  For
$i=1,2,3$ let $M'_i$ be the set of edges $M\cap E(G_i)$ with at least
one end not in $V(C)$, let $M_0$ be the set of edges of $C$ that are parallel
to an edge of $M$, and let $M_i=M'_i\cup M_0$.  We say that
$M_i$ is the {\em imprint} of $M$ on $G_i$.  
\begin{PR}\label{pr5.1}
Let a bipartite graph $G$ be a $4$-sum of $G_1$ and $G_2$ along $C$, let
$M$ be a perfect matching in $G$ such that some two edges of $M$ have
both ends in $V(C)$, let $D=D(G,M)$, and for $i=1,2$ let $M_i$ be the
imprint of $M$ on $G_i$.  If both $D(G_1,M_1)$ and $D(G_2,M_2)$ pack, 
then $\nu (D)=\tau(D)$.
\end{PR}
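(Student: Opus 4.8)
The plan is to exploit the hypothesis that $G$ is a 4-sum of $G_1$ and $G_2$ along a central $4$-circuit $C$, together with the assumption that $k=2$, i.e. two edges of $M$ have both ends in $V(C)$. Write $C=c_1c_2c_3c_4$ (in cyclic order) with the two $M$-edges being $c_1c_2$ and $c_3c_4$. The first step is to understand what $D=D(G,M)$ looks like near the contracted images of these two edges. Contracting $c_1c_2$ and $c_3c_4$ yields two vertices $x$ (the image of $c_1c_2$) and $y$ (the image of $c_3c_4$) in $D$. Since $G$ is obtained from $G_0$ by \emph{deleting} the four edges of $C$, the edges of $C$ do not survive into $D$; what remains are the $G_1$-edges and $G_2$-edges incident with $V(C)$. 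The key structural observation I would establish is that $D$ is essentially a $1$-sum (or $0$-sum) of $D_1:=D(G_1,M_1)$ and $D_2:=D(G_2,M_2)$ — or something extremely close to it — with the two ``cut vertices'' being $x$ and $y$. More precisely: because $C$ is a $4$-circuit with both of $G_1,G_2$ meeting it exactly in $C$, every circuit of $D$ that uses vertices of both $V(G_1)-V(C)$ and $V(G_2)-V(C)$ must pass through $\{x,y\}$ at least twice, and in fact — since $x$ and $y$ each arise from an $M$-edge inside $C$ — such a circuit decomposes into an $x$–$y$ path in $D_1$ and a $y$–$x$ path in $D_2$. So $\{x,y\}$ behaves like a ``strong $2$-cut.''

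With that picture in hand, the second step is to prove $\tau(D)\le\nu(D)$ by a case analysis on how minimum feedback sets and maximum packings in $D_1$ and $D_2$ interact with $\{x,y\}$, mirroring the argument in the proof of Proposition~\ref{1sum-pr}. The natural trichotomy is: (a) some maximum packing $\zC_i$ of $D_i\backslash(\text{the }x\text{--}y\text{ ``link''})$ avoids both $x$ and $y$, in which case one glues $\zC_1\cup\zC_2$ directly into a packing of $D$ and unions the corresponding feedback sets; (b) every maximum packing of each $D_i'$ hits $\{x,y\}$, in which case Remark~\ref{easy-re} (applied after possibly identifying $x$ and $y$, or treating them in turn) gives a feedback set of $D_i'$ meeting $\{x,y\}$, and one builds the packing of $D$ by deleting one circuit through the link; and (c) intermediate cases where one controls, for each edge/link $f$ of the ``virtual'' connection across $C$, whether $\nu(D_i'+f)=\nu(D_i')$, exactly as in the $F_i$ bookkeeping of Proposition~\ref{1sum-pr}. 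In every branch one exhibits a packing and a feedback set of $D$ of equal size, and since every subdigraph of $D$ is built from subdigraphs of $D_1,D_2$ by $0$- and $1$-sums one only needs $\tau(D)=\nu(D)$ itself (the rest follows from Propositions~\ref{0sum-pr} and \ref{1sum-pr} and the hypothesis that $D(G_i,M_i)$ \emph{pack}, not merely satisfy $\nu=\tau$).

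The main obstacle I anticipate is the bookkeeping needed to show that the two contracted vertices $x,y$ really do act as a clean separator and that the imprints $M_1,M_2$ are genuine perfect matchings of $G_1,G_2$ — one has to check that $M_0$ (the edges of $C$ parallel to $M$, here the two edges ``$c_1c_2$'' and ``$c_3c_4$'' of $C$ itself, reinserted) together with $M_i'$ covers $V(G_i)$ exactly once, which is where the hypothesis $k=2$ is used in an essential way (it is precisely what makes $C$ itself $M$-alternating on both halves). A secondary subtlety is that an $x$–$y$ path in $D_1$ concatenated with a $y$–$x$ path in $D_2$ must be checked to be a genuine circuit of $D$ (no repeated vertices), which holds because $V(G_1)\cap V(G_2)=V(C)$ and everything outside $C$ is disjoint; and one must be careful that when a circuit of $D$ meets $\{x,y\}$ only once, it lies entirely inside $D_1$ or entirely inside $D_2$, so that such circuits are handled by the packings $\zC_i$ directly. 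Once the separator structure is pinned down, the min–max equality is a finite case check of the same flavour as Proposition~\ref{1sum-pr}, so I expect the real content to be the structural reduction rather than the counting.
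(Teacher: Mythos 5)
Your structural reduction is exactly the one the paper uses: when two edges of $M$ lie inside $V(C)$, the imprints $M_i$ are perfect matchings of $G_i$, the digraphs $D_i=D(G_i,M_i)$ share precisely the two contracted vertices (call them $u_1,u_2$), and $D=D'_1\cup D'_2$ where $D'_i=D_i\backslash\{e_1,e_2\}$ and $e_1,e_2$ are the two surviving non-matching edges of $C$, which form a digon between $u_1$ and $u_2$ inside each $D_i$ but are absent from $D$; every circuit of $D$ meeting both sides splits into a $u_1$--$u_2$ path in one $D'_i$ and a $u_2$--$u_1$ path in the other. So step one is sound and coincides with the paper.

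The gap is in the case analysis, which is where the actual content lies, and your branch (b) fails as stated. If every maximum packing of $D'_i$ meets $\{u_1,u_2\}$, Remark~\ref{easy-re} does \emph{not} yield a minimum \VFS\ meeting $\{u_1,u_2\}$: the problematic configuration is that some maximum packings avoid $u_1$ and others avoid $u_2$ (but none avoids both), in which case Remark~\ref{easy-re} says no minimum \VFS\ of $D'_i$ contains $u_1$ or $u_2$. Excluding this is exactly where one must use the digon $\{e_1,e_2\}$ present in $D_i$ together with the hypothesis that $D_i$ packs (otherwise $\tau(D_i)>\tau(D'_i)$ while $\nu(D_i)=\nu(D'_i)$, a contradiction); your suggested fix of ``identifying $x$ and $y$'' does not preserve the packing hypothesis and does not address this. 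More broadly, the analogy with Proposition~\ref{1sum-pr} is weaker than you claim because the gluing is along two vertices rather than one: the paper needs a finer trichotomy per side (maximum packings avoiding both of $u_1,u_2$; forced through exactly one; forced through both, and then either a minimum \VFS\ containing both or a packing of size $\tau(D'_i)-1$ avoiding both), plus a final case in which a \VFS\ of size $\tau(D'_1)+\tau(D'_2)+1$ is matched by a packing obtained by merging two crossing circuits into one; your sketch does not anticipate these interactions. Finally, your parenthetical claim that every subdigraph of $D$ is built from subdigraphs of $D_1,D_2$ by $0$- and $1$-sums is false (a subdigraph can remain ``2-connected'' through $\{u_1,u_2\}$); it is harmless here only because the proposition asserts $\nu(D)=\tau(D)$, not that $D$ packs.
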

\begin{proof} For $i=1,2$ let $D_i=D(G_i,M_i)$.  Then $|V(D_1)\cap 
V(D_2)|=2$; let $V(D_1)\cap V(D_2)=\{u_1,u_2\}$.  Moreover, $E(D_1)\cap
E(D_2)=\{e_1,e_2\}$, where $e_1$ has head $u_2$ and tail $u_1$, and
$e_2$ has head $u_1$ and tail $u_2$.  For $i=1,2$ let 
$D'_i=D_i\backslash \{e_1,e_2\}$.  
\begin{claim} 
For each $D'_i$ ($i=1,2$) one of the following holds:
\begin{enumerate}
	\item[(1)]
There exists a maximum packing not using any of $u_1$ or $u_2$.
Every minimum \VFS\ does not contain any of $u_1$ or $u_2$.
	\item[(2)]
For some $k \in \{1,2\}$ the following holds:
all maximum packings use $u_k$,
there exists a maximum packing not using $u_{3-k}$,
and there exits a minimum \VFS\  which contains $u_k$ but not $u_{3-k}$.
	\item[(3)]
There exists a maximum packing using both $u_1$ and $u_2$.
There exists a minimum \VFS\  using $u_1$ and a minimum \VFS\  using $u_2$.
Moreover, either:
(a) there is a 
minimum
\VFS\ containing both $u_1,u_2$; or
(b) there is a packing of size $\tau(D'_i)-1$ not using $u_1$ or $u_2$.
\end{enumerate}
\end{claim}
\begin{cproof}

Observe that for (1)-(3) the statements about \VFS s  
(except for the last
sentence)
follow from the statements about maximum packings and Remark~\ref{easy-re}.
Suppose (1) does not hold; then every maximum packing of $D'_i$ uses one of $u_1,u_2$.
In particular $\nu(D_i)=\nu(D'_i)$. 
Suppose for a contradiction there exists 
a maximum packing $\zC_i$ of $D'_i$ not using $u_1$ and 
a maximum packing $\zC'_i$ of $D'_i$ not using $u_2$.
Remark~\ref{easy-re} implies that no
minimum \VFS\  of $D'_i$ contains $u_1$ or $u_2$.
Since $\{e_1,e_2\}$ is the edge-set of a circuit of $D_i$ 
this implies $\tau(D_i) > \tau(D'_i)$,
a contradiction since $D_i$ packs. 
Thus for some $k \in \{1,2\}$ every maximum packing of $D'_i$ uses $u_k$.
If (2) does not hold, then all maximum packings use $u_{3-k}$.
If (3)(a) does not hold, no minimum \VFS\  of $D'_i$ uses both $u_1$ and $u_2$.
This implies $\tau(D'_i \backslash \{u_1,u_2\}) \geq \tau(D'_i)-1$.
Since $D_i$ packs (3)(b) must hold.
\end{cproof}
\begin{claim} 
For $i=1,2$, let $T_i$ be a minimum \VFS\  of $D'_i$ and let $\zC_i$ be a maximum
packing of $D'_i$. We can assume one of the following holds:
\begin{itemize}
	\item[(a)]
There exists $k \in \{1,2\}$ such that 
$\zC_1$ and $\zC_2$ use $u_k$ but $u_k \not\in T_1 \cap T_2$.
	\item[(b)]
$\{u_1,u_2\} \cap (T_1 \cup T_2) = \emptyset$.
\end{itemize}
\end{claim}
\begin{cproof}
Let $T:=T_1 \cup T_2$ and let $\zC$ be an inclusion-wise maximal packing 
in $\zC_1 \cup \zC_2$.
If (a) does not hold, then $|T| \leq |\zC|$. 
If (b) does not hold, then $\{u_1,u_2\} \cap T \neq \emptyset$;
thus $T$ is a \VFS\  of $D$.
It follows that $\tau(D)=\nu(D)$, as desired.  
Thus we may assume that (a) or (b) holds.
\end{cproof}
We can assume, because of Claim~1 and Claim~2, that $D_1,D_2$ either 
both satisfy condition (1) of Claim~1, or they
both satisfy condition (3) of Claim~1 and one of $D'_1,D'_2$, say $D'_1$, satisfies (3)(b).
Consider the latter case first.
Let $T_1$ (resp. $T_2$) be a minimum \VFS\  of $D'_1$ 
(resp. $D'_2$) using $u_1$.
Let $T:=T_1 \cup T_2$. 
Let $\zC_1$ be a packing of $D'_1 \backslash \{u_1,u_2\}$ of size 
$\tau(D'_1)-1$
and let $\zC_2$ be a maximum packing of $D'_2$.
Clearly $\zC:=\zC_1 \cup \zC_2$ is a packing in $D$.
Since $|T_1 \cup T_2| = \tau(D'_1) + \tau(D'_2)-1$ and 
$|\zC|= \tau(D'_1)-1+\tau(D'_2)$, we have $\tau(D)=\nu(D)$.

Thus we may assume that both $D'_1,D'_2$ satisfy (1).
For $i=1,2$, let $\zC_i$ be a maximum packing of $D_i$.
Suppose there is $k \in \{1,2\}$ such that for $i=1,2$, 
$\tau(D'_i+u_ku_{3-k})=\tau(D'_i)$ and let $T_i$ be the corresponding minimum \VFS.
Then $T_i$ intersects all $u_{3-k}u_k$-paths of $D_i$.
Hence $T:=T_1 \cup T_2$ is a \VFS\  of $D$.
Moreover, $|T|=\tau(D'_1)+\tau(D'_2)=|\zC_1 \cup \zC_2|$, i.e. $\tau(D)=\nu(D)$.
Thus we can assume there is for $k=1,2$ an index $t(k) \in \{1,2\}$ 
such that $\tau(D'_{t(k)}+u_ku_{3-k})>\tau(D'_{t(k)})$.
Since $D_1,D_2$ pack $\nu(D'_{t(k)}+u_ku_{3-k})>\tau(D'_{t(k)})$;
let $\zF_{t(k)}$ be the corresponding packing.
Some circuit $C_{t(k)}$ of $\zF_{t(k)}$ is of the form $P_{t(k)}+u_ku_{3-k}$ 
where $P_{t(k)}$ is a $u_{3-k}u_k$-path.
For $i=1,2$ let $T_i$ be a minimum \VFS\  of $D'_i$.
Note that $T_{t(k)}$ does not intersect $P_{t(k)}$.
Observe that we cannot have $t(1)=t(2)=i \in \{1,2\}$, 
for otherwise there exist both a $u_1u_2$- and $u_2u_1$-paths in $D'_i$ 
which are not intersected by $T_i$ and hence $T_i$ does not
intersect all circuits of $D'_i$, a contradiction.
Thus we can assume $t(1)=1$ and $t(2)=2$.
Let $\zC:=\zF_1 \cup \zF_2 \cup \{P_1 \cup P_2\}-\{C_1,C_2\}$.
Then $\zC$ is a packing of $D$ and $T:=T_1 \cup T_2 \cup \{u_1\}$ is a \VFS\  of $D$.
Moreover, $|T|=\tau(D'_1)+\tau(D'_2)+1=|\zC|$, i.e. $\tau(D)=\nu(D)$, 
as desired.
\end{proof}

\begin{PR}\label{pr5.2} 
Let $G, M, D$, where $\nu(D)<\tau(D)$, and $G_1,G_2,G_3,C$ be as in 
Proposition~\ref{pr4.8}. Then at most one edge of $M$ has
both ends in $V(C)$.
\end{PR}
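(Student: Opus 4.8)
The strategy is a direct reduction to Proposition~\ref{pr5.1}. Suppose for a contradiction that $G, M, D$ are as in Proposition~\ref{pr4.8} with $\nu(D) < \tau(D)$, and that some two edges $f_1, f_2$ of $M$ have both ends in $V(C)$, where $C$ is the length-$4$ central circuit along which $G$ is the trisum of braces $G_1, G_2, G_3$. Since $C$ has only four vertices and $M$ restricted to $V(C)$ already consists of $f_1$ and $f_2$, the matching $M$ meets $V(C)$ in exactly these two edges. Recall a trisum of $G_1,G_2,G_3$ along $C$ is in particular a $4$-sum: write $G$ as the $4$-sum of $G_1' := G_1$ and $G_2' := G_2 \cup G_3$ along $C$ (one checks $G_2 \cup G_3$ still meets $G_1$ exactly in $C$, and both sides have a vertex outside the other, using the trisum hypotheses). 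So it suffices to show that with $M_1, M_2'$ the imprints of $M$ on $G_1'$ and $G_2'$, both $D(G_1', M_1)$ and $D(G_2', M_2')$ pack; then Proposition~\ref{pr5.1} gives $\nu(D) = \tau(D)$, a contradiction.

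The heart of the matter is thus verifying the hypotheses of Proposition~\ref{pr5.1} and establishing that the two ``smaller'' digraphs pack. First I would argue that $D(G_1', M_1)$ and $D(G_2', M_2')$ are strictly smaller than $D$ in the $|V| + |E|$ measure: each $G_i'$ omits at least one vertex of the other side and, crucially, omits some edges of $G$ (the edges of $C$ are deleted when forming the sum, and by Lemma~\ref{lemmaX} each $G_i$ has at least $12$ edges, so the pieces are nontrivial and genuinely smaller). I also need the imprint $M_i$ to be a perfect matching of $G_i'$ — this is where having two edges of $M$ with both ends in $V(C)$ is used: those two edges sit inside $C$, hence inside every $G_i$, and they match all of $V(C)$, while $M_i'$ matches the remaining vertices of $G_i'$; since $C$ is central in $G_0$ one deduces $M_i$ is indeed perfect. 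Then, by minimality in Proposition~\ref{pr4.8} together with Theorem~\ref{maintool} (the trisum decomposition of $G$ descends to $G_1, G_2, G_3$, and $G_2 \cup G_3$ is itself obtained from planar braces by trisums, or can be further decomposed), $D(G_i', M_i)$ has no odd-double-circuit or $F_7$ minor and hence packs.

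The main obstacle I anticipate is the bookkeeping around $G_2' = G_2 \cup G_3$: one must check that this is legitimately a brace (or at least that $D(G_2', M_2')$ is governed by the induction), that $G_2 \cup G_3$ still shares exactly $C$ with $G_1$, and that the ``outside vertex'' conditions for a $4$-sum hold. If $G_2 \cup G_3$ fails to be a brace or to have the right excluded-minor structure, the cleaner route is to apply Proposition~\ref{pr5.1} iteratively, first splitting off $G_1$, then observing the resulting piece is a $4$-sum of $G_2$ and $G_3$ (or recurse through the trisum structure), at each stage the pieces being smaller and hence packing by induction. A secondary technical point is confirming that the imprint construction interacts correctly with the contraction defining $D(\cdot, \cdot)$, i.e.\ that $D(G_i', M_i)$ really is a minor of $D$ (it is, via deleting the edges/vertices outside $G_i'$ and contracting the matching edges), so the inductive hypothesis of Proposition~\ref{pr4.8} applies. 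Once these structural checks are in place, Proposition~\ref{pr5.1} closes the argument immediately.
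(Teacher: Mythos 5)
Your proposal is correct and follows essentially the same route as the paper: view the trisum as a $4$-sum of $G_1$ and $G_2\cup G_3$ along $C$, note that both summands (with the imprints of $M$) yield digraphs with no odd double circuit or $F_7$ minor by Theorem~\ref{maintool}, hence pack by the minimality hypothesis of Proposition~\ref{pr4.8}, and conclude via Proposition~\ref{pr5.1}. The technical points you flag (imprints being perfect matchings, the size comparison via Lemma~\ref{lemmaX}, braceness of $G_2\cup G_3$) are handled equally briefly, or left implicit, in the paper's own argument.
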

\begin{proof}
Suppose for a contradiction that two edges of $M$ have both ends
in $V(C)$.
For $i=1,2,3$ let $M_i$ be the imprint of $M$ on $G_i$.  The graphs 
$G_1$ and $G_2\cup G_3$ are obtained from planar braces by repeatedly
applying the 4-sum operation, and hence the digraphs $D_1=D(G_1,M_1)$ and
$D_2=D(G_2\cup G_3, M_2\cup M_3)$ have no minor isomorphic to an 
odd double circuit or $F_7$ by Theorem~\ref{maintool}.  
Thus $D_1$ and $D_2$ pack, and hence
by Proposition~\ref{pr5.1} $\nu (D)=\tau (D)$, a contradiction.
\end{proof}
%
%
\section{Trisum-part II} \label{sec-3sum}

\begin{LE}\label{lem6.1}
 Let $D_1,D_2$ be digraphs with $V(D_1)\cap V(D_2)=
\{u_1,u_2,u_3\}$ and $E(D_1)\cap E(D_2)=\emptyset$.  Let $D=D_1\cup D_2$,
$a\not\in V(D)$, 
$E_1=\{u_1u_2, u_1 u_3, u_2u_3\}$, $E_2=\{u_2u_1,u_3u_1,
u_3u_2\}$, 
$Z_1=\{au_2,u_2a,u_1a,au_3\}$, and 
$Z_2=\{au_2,u_2a,au_1,\allowbreak u_3a\}$,
where $a\not\in V(D)$.
Assume that
\begin{itemize}
\item[\rm(a)] if, for $i=1,2$, $C_i$ is a circuit of $D_i$,
then $V(C_1)\cap V(C_2)\subseteq\{u_2\}$,
\item[\rm(b)] if $C$ is a circuit of $D$ that uses edges of both $D_1$ and
$D_2$, then $C=P_1\cup P_2$ and there exist integers $i,j\in \{1,2,3\}$
such that $i<j$ and $P_1$ is a $u_ju_i$-path of $D_1$ and $P_2$ is a 
$u_iu_j$-path of $D_2$, and
\item[\rm(c)] there exist integers $i,j$ such that $\{i,j\}=\{1,2\}$,
$D_i+E_i$ packs and is strongly $2$-connected, 
and $D_j+Z_j$ packs.  
\end{itemize}
\noindent Then $\tau (D)=\nu(D)$.
\end{LE}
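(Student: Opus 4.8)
The plan is to follow the strategy of Proposition~\ref{pr5.1}: produce a packing and a \VFS\ of $D$ of the same cardinality. I would first reduce to one case of hypothesis~(c). If $\{i,j\}=\{2,1\}$ rather than $\{1,2\}$, apply the lemma to the digraph $\overline D$ obtained from $D$ by reversing every edge. Reversal interchanges $D_1$ with $D_2$, turns the ``down'' paths of~(b) into ``up'' paths and vice versa, and carries $E_1\leftrightarrow E_2$ and $Z_1\leftrightarrow Z_2$, so $\overline D$ (with $\overline{D_1},\overline{D_2}$ renamed accordingly) satisfies (a), (b) and the $i=1,j=2$ instance of~(c); since $\nu$ and $\tau$ are invariant under reversal, that case suffices. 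So assume that $D_1+E_1$ packs and is strongly $2$-connected, that $D_2+Z_2$ packs, and set $\alpha=\nu(D_1+E_1)=\tau(D_1+E_1)$ and $\beta=\nu(D_2+Z_2)=\tau(D_2+Z_2)$.

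Next I would record the relevant circuit structure. By~(a) and~(b), every circuit of $D$ lies in $D_1$, lies in $D_2$, or is \emph{mixed of type $(i,j)$}: it equals $P_1\cup P_2$ with $P_1$ a $u_ju_i$-path of $D_1$ and $P_2$ a $u_iu_j$-path of $D_2$ for some $i<j$. Consequently every packing of $D$ contains at most one mixed circuit (a mixed circuit meets at least two of $u_1,u_2,u_3$), and at most one circuit of a packing lying wholly in $D_1$ or wholly in $D_2$ meets $u_2$. Dually, $E_1$ realizes the three ``up-arcs'' $u_iu_j$ ($i<j$) as edges, so the circuits of $D_1+E_1$ are the circuits of $D_1$ together with circuits $P_1+u_iu_j$ (for $u_ju_i$-paths $P_1$ of $D_1$) and $Q+u_1u_2+u_2u_3$ (for $u_3u_1$-paths $Q$ of $D_1$ missing $u_2$); while $Z_2$ routes the three ``down-arcs'' $u_j\to u_i$ through the new vertex $a$, so the circuits of $D_2+Z_2$ meeting $a$ are the digon on $\{a,u_2\}$ together with circuits $P_2\cup\{u_ja,au_i\}$ (for $u_iu_j$-paths $P_2$ of $D_2$ with $i<j$). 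In particular a packing of $D_1+E_1$ uses at most one $E_1$-edge and a packing of $D_2+Z_2$ meets $a$ at most once --- this is the point of the gadget $a$: it reproduces inside $D_2+Z_2$ the ``at most one mixed circuit'' behaviour of $D$. These descriptions identify, for each type $(i,j)$, an $E_1$-circuit of $D_1+E_1$ of that type, the $D_1$-part of a mixed circuit of $D$ of that type, and an $a$-circuit of $D_2+Z_2$ of that type together with its $D_2$-part; and strong $2$-connectivity of $D_1+E_1$ provides $\alpha\ge 2$ together with, for each $\ell$, a type-$(i,j)$ down-path in $D_1$ avoiding $u_\ell$ whenever $\ell\notin\{i,j\}$.

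The main step is the combination. The bound $\tau(D)\le\alpha+\beta$ is easy: a minimum \VFS\ $S_1$ of $D_1+E_1$ meets $V(P_1)$ for every $u_ju_i$-path $P_1$ of $D_1$ (because $P_1+u_iu_j$ is a circuit of $D_1+E_1$), and for a minimum \VFS\ $S_2$ of $D_2+Z_2$ the set $S_2\setminus\{a\}$ meets every circuit of $D_2$; hence $S_1\cup(S_2\setminus\{a\})$ is a \VFS\ of $D$, of size $\le\alpha+\beta-1$ if moreover $\nu(D_2)<\beta$ (so that, by Remark~\ref{easy-re}, some minimum \VFS\ of $D_2+Z_2$ contains $a$). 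The substantive part is the matching lower bound on $\nu(D)$. For this I would take a maximum packing and a minimum \VFS\ of each of $D_1+E_1$ and $D_2+Z_2$ and, imitating Claims~1 and~2 in the proof of Proposition~\ref{pr5.1} but now with interface $\{u_1,u_2,u_3\}$ for $D_1+E_1$ and interface $\{a,u_1,u_2,u_3\}$ for $D_2+Z_2$, use Remark~\ref{easy-re} and the strong $2$-connectivity of $D_1+E_1$ to reduce to a short list of configurations of interface behaviour. In each configuration I would build a packing of $D$ by deleting from the union of the two packings any circuits that use artificial edges and, when the deleted circuits have a common type, reinserting the single genuine mixed circuit $P_1\cup P_2$ they determine (and discarding the $\{a,u_2\}$-digon if present), and then check that this packing of $D$ and a suitably trimmed version of the \VFS\ $S_1\cup(S_2\setminus\{a\})$ have the same cardinality (roughly $\alpha+\beta$, one less exactly when both pieces are forced onto their artificial structure), which yields $\tau(D)=\nu(D)$.

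The hard part will be this last case analysis: one must control, simultaneously, the single possible $u_2$-overlap between within-$D_1$ and within-$D_2$ circuits, the single possible forced mixed circuit, and whether a minimum \VFS\ of $D_2+Z_2$ can avoid $a$ (equivalently whether $\nu(D_2)=\beta$), and arrange that the packing and the \VFS\ of $D$ that we produce always match in size. The strong $2$-connectivity hypothesis on $D_1+E_1$ should enter precisely here, to exclude degenerate configurations --- for instance a minimum \VFS\ of $D_1+E_1$ meeting none of $u_1,u_2,u_3$ together with two maximum packings of $D_1+E_1$ avoiding $u_k$ and $u_{k'}$ respectively for distinct $k,k'$, which (using that $D_1+E_1$, being strongly $2$-connected, has a circuit through $u_k$ and $u_{k'}$) would contradict the fact that $D_1+E_1$ packs, exactly as in the proof of Proposition~\ref{pr5.1}.
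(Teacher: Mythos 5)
Your preliminary observations are sound: the reversal-and-swap symmetry does reduce (c) to the case $i=1$, $j=2$; your catalogue of the circuits of $D$, of $D_1+E_1$ and of $D_2+Z_2$ is correct; and $S_1\cup(S_2\setminus\{a\})$ is indeed a transversal of $D$. But the proof stops exactly where the lemma becomes nontrivial. The whole difficulty is a gap of one: the obvious packing of $D$ has size $\nu(D_1)+\nu(D_2)$ (and even that needs an argument, because every maximum packing of $D_1$ and of $D_2$ may be forced through $u_2$), while the obvious transversal has size $\tau(D_1)+\tau(D_2)+1$; since $\nu(D_i)=\tau(D_i)$, one must either gain a circuit on one side or save a vertex on the other, and deciding which of the two is possible is the substance of the proof. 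Your recipe for the packing --- delete from the two packings the circuits that use artificial edges and, ``when the deleted circuits have a common type'', reinsert the mixed circuit they determine --- simply does not cover the cases where the types differ, or where no maximum packing of $D_i$ can be pushed off $u_2$ no matter which artificial edge is offered; and the claim that the transversal can be ``suitably trimmed'' by one exactly when needed is asserted, not proved. You acknowledge this yourself (``the hard part will be this last case analysis''), so what you have is a plan, not a proof.

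Concretely, what is missing is the machinery the paper builds to close that gap: the sets $F_i$ of edges of $E_i$ whose addition does not raise $\nu(D_i)$; the fact that for each $i<j$ at least one of $u_iu_j\in F_1$, $u_ju_i\in F_2$ holds, whose proof rests on the genuinely delicate statement that if $D_1+E_1$ packs and every maximum packing of $D_1+u_1u_3$ uses $u_2$, then every maximum packing of $D_1$ uses $u_2$ --- proved by a minimality argument on an auxiliary subdigraph $L$ containing a prescribed path and prescribed packings; the resulting packing of $D$ of size $\nu(D_1)+\nu(D_2)$; and an endgame in which one shows that some $D_i+F_i$ fails to pack and then uses strong $2$-connectivity of $D_1+E_1$ not in the manner of Proposition~\ref{pr5.1} (your guess of a circuit through two prescribed vertices), but to connect a $u_2u_1$-path and a $u_3u_2$-path missed by a transversal $T_1$ by a path $Q$, producing a subdigraph $P_1\cup P_2\cup Q+E_1$ with $\nu=1<2=\tau$ and thereby forcing $T_1$ to be a transversal of $D_1+F_1+u_1u_2$ or $D_1+F_1+u_2u_3$; this is what finally yields a transversal of size $\tau(D_1)+\tau(D_2)$ and the contradiction. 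The role of the gadget vertex $a$ is also different from the one you assign it: its point is that, in the critical configuration, a minimum transversal of $D_2+Z_2$ has size $\tau(D_2)$, hence cannot contain $a$, hence must contain $u_2$ in order to meet the digon on $\{a,u_2\}$ --- a fact your outline never uses. Without these steps (or genuine substitutes for them) the equality $\tau(D)=\nu(D)$ is not established.
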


\begin{proof} Suppose for a contradiction that $\nu (D)<\tau (D)$.
\begin{claim}\label{clm7.1}
The digraph $D$ has a packing of size $\nu (D_1)+\nu(D_2)-1$.
\end{claim}

\begin{cproof}
Clearly $\nu (D_2\backslash u_2)\ge\nu(D_2)-1$, and so the union of any
maximum packing of $D_1$ with any packing of $D_2\backslash u_2$ of
size $\nu(D_2)-1$ is as desired by (a).  This proves Claim~\ref{clm7.1}.
\end{cproof}
\begin{claim}\label{clm7.2}
The digraph $D$ has a \VFS\  of size at most $\tau (D_1)+\tau (D_2)+1$.
\end{claim}

\begin{cproof}
By (c) we may assume from the symmetry that $D_1+E_1$ packs.  Clearly
$\nu (D_1+E_1)\le\nu(D_1)+1$.  Thus $\tau(D_1+E_1)\le \tau (D_1)+1$.
Let $T_1$ be a \VFS\  of $D_1+E_1$ of size at most $\tau(D_1)+1$, and
let $T_2$ be a \VFS\  of $D_2$ of size $\tau(D_2)$.  By (b) $T_1\cup T_2$ is
a \VFS\  of $D$, as required.  This proves Claim~\ref{clm7.2}.
\end{cproof}

For $i=1,2$ let $F_i$ be the set of all edges $f\in E_i$ such that
$\nu (D_i+f)=\nu (D_i)$.
\begin{claim}\label{clm7.3}
For $i=1,2$, $\nu(D_i+F_i)=\nu (D_i)$.
\end{claim}

\begin{cproof}
If $\nu (D_i+F_i)>\nu(D_i)$, then, since every edge of $E_i$ has both ends
in $\{u_1,u_2,u_3\}$, we deduce that $\nu(D_i+f)>\nu(D_i)$ for some
$f\in F_i$, a contradiction.  This proves Claim~\ref{clm7.3}.
\end{cproof}
\begin{claim}\label{clm7.4}
Let $i,j\in\{1,2,3\}$ be such that $i<j$, and let $D'$ be a subdigraph of
 $D_1$.  If $\nu(D'+u_iu_j)>\nu(D')$, then there exist a maximum packing
$\cal C$ of $D'$ and a path $P$ in $D'$ from $u_j$ to $u_i$ such that 
every member of $\cal C$ is disjoint from $P$.
\end{claim}

\begin{cproof}
Let $\cal C'$ be a maximum packing of $D'+u_iu_j$.  Since $\cal C'$ is not a 
packing of $D'$, some member of $\cal C'$, say $C$,
 uses the edge $u_iu_j$. Thus $\cal C'-\{C\}$ and $C\backslash u_iu_j$ satisfy
the conclusion of the claim.
\end{cproof}
\begin{claim}\label{clm7.5}
If $D_1+E_1$ packs and every maximum packing of $D_1+u_1u_3$ uses $u_2$, then
every maximum packing of $D_1$ uses $u_2$.
\end{claim}
\begin{cproof}
Suppose for a contradiction that every maximum packing of $D_1+u_1u_3$ uses
$u_2$, but some maximum packing of $D_1$ does not use $u_2$.  Then
$\nu (D_1+u_1u_3)>\nu(D_1)$.  By Claim~\ref{clm7.4} applied to $D'=D_1$
there exist a maximum packing $\cal C$ of $D_1$ and a path $P$ of
$D_1$ from $u_3$ to $u_1$ such that $P$ is disjoint from every member of
$\cal C$.  Let 
$L$
be a subdigraph of $D_1$ such that
\begin{itemize}
\item[$(\alpha)$] $L$ includes $P$ and every member of $\cal C$,
\item[$(\beta)$] $L$ includes every member of some maximum packing of $D_1$ that does not use $u_2$, and
\item[$(\gamma)$] subject to $(\alpha)$ and $(\beta)$, $E(L)$ is minimal.
\end{itemize}
By $(\alpha)$ $\nu(L)=\nu(D_1)$.  
We claim that $\nu(L+u_1u_2+u_2u_3)>\nu(L)$.  
To prove this claim suppose for a contradiction that equality holds.  
Since $D_1+E_1$ packs we deduce that $\tau(L+u_1u_2+u_2u_3)=\nu(L)$.  
Let $T$ be a \VFS\  of $L+u_1u_2+u_2u_3$ of size $\nu(L)$.  
From $(\beta)$ we deduce that $u_2\not\in T$, but then it follows that $T$ is a \VFS\  
of $L+u_1u_3$, contrary to $(\alpha)$.  
This proves that $\nu(L+u_1u_2+u_2u_3)>\nu(L)$.  
Let $\cal S$ be a maximum packing of $L+u_1u_2+u_2u_3$.
We may assume that no member $C$ of $\cal S$ uses both edges $u_1u_2,u_2u_3$,
 for otherwise 
$\cal S\setminus\{C\}\cup\{C+u_1u_3-u_1u_2-u_2u_3\}$ is a maximum packing of $D_1+u_1u_3$ avoiding $u_2$, a contradiction.
Hence, either $\nu(L+u_1u_2)>\nu(L)$ or $\nu(L+u_2u_3)>\nu(L)$, and so we may assume the former.  
By Claim~\ref{clm7.4} 
applied to $D'=L$
there exists a maximum packing $\cal C'$ of $L$ and a
path $P'$ in $L$ from $u_2$ to $u_1$ disjoint from every member of $\cal C'$.
Since the union of $P$ and all members of $\cal C$ does not include a path from
$u_2$ to $u_1$, there exists an edge $e\in E(P')$ that does not belong to $P$
or any member of $\cal C$.  Thus $L\backslash e$ satisfies $(\alpha)$.
But $L\backslash e$ includes every member of $\cal C'$, and hence it also
satisfies $(\beta)$, contrary to $(\gamma)$.  This proves Claim~\ref{clm7.5}.
\end{cproof}

\begin{claim}\label{clm7.6}
Let $i,j\in \{1,2,3\}$ with $i<j$.  If $u_iu_j\not\in F_1$, then 
$u_ju_i\in F_2$.
\end{claim}

\begin{cproof}
Suppose for a contradiction that $u_iu_j\not\in F_1$ and $u_ju_i\not\in F_2$.
Let $\cal C_1$ be a packing of $D_1+u_iu_j$ of size $\nu(D_1)+1$, and
let $\cal C_2$ be a packing of $D_2+u_ju_i$ of size $\nu(D_2)+1$.  Then
$\cal C_1$ includes a circuit $C_1$ containing $u_iu_j$, and
$\cal C_2$ includes a circuit $C_2$ containing $u_ju_i$. 
Let $C$ be the circuit $(C_1\backslash u_iu_j)\cup(C_2\backslash
u_ju_i)$.  If one of $\cal C_1-\{C_1\}$, $\cal C_2-\{C_2\}$ does not use
$u_2$, then $\cal C:=(\cal C_1-\{C_1\})\cup(\cal C_2-\{C_2\})\cup\{C\}$ is a 
packing of $D$ of size $\nu(D_1)+\nu(D_2)+1$ by (a).
Then because of Claim~\ref{clm7.2}, $\tau(D)=\nu(D)$ packs, a contradiction.
Thus we may assume that both $\cal C_1-\{C_1\}$, $\cal C_2-\{C_2\}$ use
$u_2$ for all choices of $\cal C_1$ and $\cal C_2$.  Thus $i=1$ and $j=3$,
and every maximum packing of $D_1+u_1u_3$ or $D_2+u_3u_1$ uses $u_2$.
By (c) we may assume that $D_1+E_1$ and $D_2+Z_2$ packs.
Hence by Claim~\ref{clm7.5}
every maximum packing of $D_1$ uses $u_2$.  By Remark~\ref{easy-re}
$D_1$ has \VFS\  $T_1$ of size $\nu(D_1)$ with $u_2\in T_1$, and
$D_2+u_3u_1$ has a \VFS\  $T_2$ of size $\nu(D_2)+1$ with
$u_2\in T_2$.  By (b) $T_1\cup T_2$ is a \VFS\  of $D$ of size $\nu(D_1)+
\nu(D_2)$.  On the other hand, by deleting one of the circuits of $\cal C$
that contain $u_2$ we obtain a packing of $D$ of size $\nu(D_1)+\nu(D_2)$.
Thus $\nu(D)=\tau(D)$,
a contradiction.  This proves Claim~\ref{clm7.6}.
\end{cproof}

\begin{claim}\label{clm7.7}
The digraph $D$ has a packing of size $\nu(D_1)+\nu(D_2)$.
\end{claim}

\begin{cproof}
Suppose not.  Then for $i=1,2$ every maximum packing of $D_i$ uses $u_2$,
for otherwise the union of a maximum packing in $D_i$ that does not use
$u_2$ with any maximum packing of $D_{3-i}$ is as desired.  By 
Remark~\ref{easy-re} the digraph $D_i$ has a \VFS\  $T_i$ of size $\tau(D_i)$ with
$u_2\in T_i$.  Let us assume first that $\nu(D_1+u_1u_3)>\nu(D_1)$.
Then $\nu(D_2+u_3u_1)=\nu(D_2)$ by Claim~\ref{clm7.6}.
The graph $D_2+u_3u_1$ packs (because by (c) $D_2+E_2$ or $D_2+Z_2$ packs), 
and so $\nu(D_2+u_3u_1\backslash u_2)=
\tau(D_2+u_3u_1\backslash u_2)$.  If $\nu(D_2+u_3u_1\backslash u_2)=
\nu(D_2)$, then let $\cal C_1$ be a maximum packing in $D_1+u_1u_3$ and 
let $\cal C_2$ be a maximum packing in $\nu(D_2+u_3u_1\backslash u_2)$.
Then some circuit of $\cal C_1$ uses the edge $u_1u_3$ (because
$\nu(D_1+u_1u_3)>\nu(D_1)$), and some circuit of $\cal C_2$ uses the edge
$u_3u_1$ (because every maximum packing of $D_2$ uses $u_2)$.
Thus $\cal C_1$ and $\cal C_2$ can be combined as in the proof of 
Claim~\ref{clm7.6} to produce the desired packing of $D$.  Thus we may 
assume that $\nu(D_2+u_3u_1\backslash u_2)<\nu(D_2)$.  Let $T'_2$ be a \VFS\ 
in $D_2+u_3u_1\backslash u_2$ of size $\nu(D_2)-1$;
then $T_1\cup T'_2$ is a \VFS\  in $D$ by (b), and its size is
$\nu(D_1)+\nu(D_2)-1$, contrary to Claim~\ref{clm7.1}.
This completes the case when $\nu(D_1+u_1u_3)>\nu(D_1)$.  

Thus we may assume
that $\nu(D_1+u_1u_3)=\nu(D_1)$ and $\nu(D_2+u_3u_1)=\nu(D_2)$.  From the
symmetry and (c) we may assume that $D_2+Z_2$ packs.  Since every
maximum packing of $D_2$ uses $u_2$, and $\nu(D_2+u_3u_1)=\nu(D_2)$,
we see that $\nu(D_2+Z_2)=\nu(D_2)$.  Since $D_2+Z_2$ packs, there exists
a \VFS\  $T''_2$ of $D_2+Z_2$ of size $\tau(D_2)$.  Since
$T''_2\cap V(D_2)$ is a \VFS\  of $D_2$, we deduce that $a\not\in T''_2$, and
hence $u_2\in T''_2$, because $T''_2$ intersects the circuit of
$D_2+Z_2$ with vertex-set $\{a,u_2\}$.  
Thus $T''_2$ is a \VFS\  of $D_2+u_3u_1$ with $u_2\in T''_2$, and 
so $T_1\cup T''_2$ is a \VFS\  of $D$ by (b).  Moreover, $|T_1\cup T''_2|=
\tau (D_1)+\tau(D_2)-1$, contrary to Claim~\ref{clm7.1}.  This completes
the proof of Claim~\ref{clm7.7}.
\end{cproof}

We are now ready to complete the proof of the lemma.  We claim that one
of $D_1+F_1$, $D_2+F_2$ does not pack.  Indeed, if both of them pack, then
by Claim~\ref{clm7.3} the digraph $D_i+F_i$ has a \VFS\  of size $\nu(D_i)$,
and the union of those sets is a \VFS\  in $D$ by (b) of size $\nu(D_1)+
\nu(D_2)$, contrary to Claim~\ref{clm7.7}.  Thus we may assume that $D_2+
F_2$ does not pack.

By (c) the digraph $D_1+E_1$ packs and is strongly 2-connected, 
and $D_2+Z_2$ packs.  To motivate the next step, notice that
since $D_2+Z_2$ packs, but $D_2+F_2$
does not, we have $u_2u_1,u_3u_2\in F_2$.  Since $D_1+E_1$ packs,
so does $D_1+F_1$, and hence by Claim~\ref{clm7.3} there exists a \VFS\  $T_1$
in $D_1+F_1$ of size $\tau(D_1)$.  

We claim that the
set $T_1$ is a \VFS\  in $D_1+F_1+u_1u_2$ or $D_1+F_1+u_2u_3$.
To prove this claim suppose for a contradiction that this is not the case.
We deduce that there exist a $u_2u_1$-path
$P_1$ and a $u_3u_2$-path $P_2$ in $D_1$, both disjoint from $T_1$.
Since $T_1$ intersects every circuit of $D_1$, it follows that 
$V(P_1)\cap V(P_2)=\{u_2\}$.  Since $D_1+E_1$ is strongly 
2-connected, there exists a path $Q$ in $D_1$ from $V(P_2)-\{u_2\}$ to
$V(P_1)-\{u_2\}$; we may assume that no interior vertex of $Q$
belongs to $V(P_1)\cup V(P_2)$.  Let $H$ be the digraph $P_1\cup P_2
\cup Q+E_1$; then $\nu (H)=1<2=\tau (H)$, contrary to the fact that
$D_1+E_1$ packs.  This proves our claim that 
$T_1$ is a \VFS\  in $D_1+F_1+u_1u_2$ or $D_1+F_1+u_2u_3$.

From the symmetry we may assume that $T_1$ is a \VFS\  in $D_1+F_1+u_1u_2$.
Let $F'_2=F_2-\{u_1u_2\}$.  Since $D_2+Z_2$ packs, so does its minor
$D_2+F'_2$, and so by Claim~\ref{clm3} the digraph $D_2+F'_2$ has a \VFS\ 
$T_2$ of size $\tau(D_2)$.  By (b) the set $T_1\cup T_2$ is a \VFS\  in 
$D$, and its size is $\tau (D_1)+\tau (D_2)$, contrary to Claim~\ref{clm7.7}.\
\end{proof}

\begin{PR}\label{prkis1}
Let $G,M,D$, where $\nu(D)<\tau(D)$, and $G_1,G_2,G_3,C$ be as in 
Proposition~\ref{pr4.8}.
Then either none or exactly two edges of $M$ have both ends in $V(C)$.
\end{PR}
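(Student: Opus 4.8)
The plan is to argue by contradiction: suppose exactly one edge of $M$ has both ends in $V(C)$, i.e.\ $k=1$, and derive a contradiction with the assumption $\nu(D)<\tau(D)$. The strategy mirrors the proof of Proposition~\ref{pr5.2}: we want to decompose $D$ into two digraphs obtained from the trisum pieces, check that each piece packs (via Theorem~\ref{maintool}), verify the hypotheses (a), (b), (c) of Lemma~\ref{lem6.1}, and conclude $\tau(D)=\nu(D)$. First I would fix notation: let $C=c_1c_2c_3c_4$ with the single $M$-edge joining, say, $c_1$ and $c_2$, so that in $D=D(G,M)$ the circuit $C$ becomes a triangle on three vertices which I will call $u_1,u_2,u_3$ (one of them, say $u_2$, being the vertex obtained by contracting the $M$-edge of $C$). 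The edges of $C$ that survive contraction give precisely the six directed edges of $E_1\cup E_2$ in the notation of Lemma~\ref{lem6.1}, or rather the ``missing'' ones when the edges of $C$ are deleted in forming $G$ from $G_0$.

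Next I would set $D_1=D(G_1,M_1)$ and $D_2=D(G_2\cup G_3,\,M_2\cup M_3)$, where $M_i$ is the imprint of $M$ on $G_i$; as in Proposition~\ref{pr5.2}, $G_1$ and $G_2\cup G_3$ are obtained from planar braces by repeated $4$-sums, so by Theorem~\ref{maintool} neither $D(G_1,M_1)$ nor $D(G_2\cup G_3,M_2\cup M_3)$ has a minor isomorphic to an odd double circuit or $F_7$, hence both pack; the same applies to the various augmentations $D_i+E_i$ and $D_i+Z_i$, provided I check these augmented digraphs are also minors of (or are themselves) digraphs arising from $K_{3,3}$-and-Heawood-free braces — this is where I expect to spend care, since adding the edges of $E_i$ or $Z_i$ corresponds to putting back (some of) the edges of the central $4$-cycle $C$, and possibly one extra vertex $a$; I would argue that $D_1+E_1$ is just $D(G_1,M_1)$ with the edges of $C$ restored, which is $D(G_0',M)$ for the relevant brace and therefore still packs, and that $D_2+Z_2$ arises similarly after an even subdivision / vertex-split that preserves the relevant exclusion. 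Then conditions (a) and (b) of Lemma~\ref{lem6.1} follow from the structure of the trisum: a circuit of $D$ using edges of both pieces must traverse the triangle on $\{u_1,u_2,u_3\}$, entering and leaving through those three vertices, and because exactly one $M$-edge sat inside $C$ the ``crossing'' pattern is exactly the one prescribed — a $u_ju_i$-path in $D_1$ followed by a $u_iu_j$-path in $D_2$ with $i<j$ — while (a) holds because the only shared vertex two internally-disjoint circuits can have is the contracted vertex $u_2$. Finally, strong $2$-connectivity of $D_1+E_1$ comes from $G_1$ being a brace together with Proposition~\ref{connectivity}.

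Having verified (a), (b), (c), Lemma~\ref{lem6.1} yields $\tau(D)=\nu(D)$, contradicting $\nu(D)<\tau(D)$; hence $k\neq 1$, i.e.\ either none or exactly two edges of $M$ have both ends in $V(C)$, as claimed. The main obstacle I anticipate is bookkeeping around the vertex identifications: making precise which of the three trisum vertices becomes the ``special'' vertex $u_2$ of Lemma~\ref{lem6.1}, confirming that the edge sets $E_1,E_2$ (and the auxiliary sets $Z_1,Z_2$ with the extra apex $a$) correspond exactly to legitimate augmentations of $D_1,D_2$ that still pack, and correctly translating the $M$-alternating-circuit structure through the contraction so that hypothesis (b)'s $i<j$ orientation condition holds on the nose. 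Everything else is a routine application of the already-established propositions, exactly parallel to the case $k=2$ handled in Proposition~\ref{pr5.2}.
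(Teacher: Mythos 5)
Your high-level plan (split the trisum into two digraphs sharing three vertices and apply Lemma~\ref{lem6.1}, in parallel with Proposition~\ref{pr5.2}) is indeed the paper's strategy, but the specific decomposition you choose is where the real content lies, and yours does not work. When exactly one edge $m_2$ of $M$ has both ends in $V(C)$, the other two $C$-vertices $v_1,v_3$ are matched by edges $m_1,m_3$ whose other ends lie outside $C$, and each of $m_1,m_3$ lies in exactly one of the three summands. With your split $D_1=D(G_1,M_1)$, $D_2=D(G_2\cup G_3,M_2\cup M_3)$, the imprint is in general not a perfect matching of the piece: whichever side misses $m_1$ or $m_3$ has $v_1$ or $v_3$ unmatched, so at least one of your two digraphs is not even defined. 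More importantly, hypotheses (a) and (b) of Lemma~\ref{lem6.1} are not ``automatic from the structure of the trisum'': they hold only if the side playing the role of $D_2$ contains neither $m_1$ nor $m_3$, because that is what makes $u_1$ a source and $u_3$ a sink of $D_2$. This is precisely why the three-way structure is needed: by pigeonhole one may renumber so that $m_1,m_3\in E(G_1)\cup E(G_2)$, and the correct split is $G_1\cup G_2$ versus $G_3$ --- not $G_1$ versus $G_2\cup G_3$ as in the case of two $M$-edges in $V(C)$. If, say, $m_1\in E(G_1)$ and $m_3\in E(G_2)$, then under your split $u_1$ or $u_3$ can be an internal vertex of circuits on both sides and (a)/(b) fail.

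A second gap is the treatment of the apex $a$ and the set $Z_2$. These do not arise from a vague ``even subdivision / vertex-split'': the paper glues a cube $Q$ onto $C$ on the $G_3$ side to complete the matching (the contraction of the one matching edge of $Q$ not meeting $G_3$ is the vertex $a$), sets $J_2=D(G_3\cup Q,M_3')$, and deduces that $D_2+Z_2\subseteq J_2$ packs from the minimality hypothesis of Proposition~\ref{pr4.8} together with the size bound $|V(J_2)|+|E(J_2)|=|E(G_3\cup Q)|<|E(G)|$, which requires Lemma~\ref{lemmaX}. Similarly, $D_1+E_1\cong J_1=D(G_1\cup G_2,M_1\cup M_2)$ packs by minimality plus a size comparison --- Theorem~\ref{maintool} by itself only excludes the bad minors, it does not yield packing --- and it is strongly $2$-connected because $G_1\cup G_2$ is a brace (Proposition~\ref{connectivity}), not merely because $G_1$ is. Without the pigeonhole choice of split, the cube construction, and these size/minimality checks, your verification of hypothesis (c), and hence the application of Lemma~\ref{lem6.1}, does not go through.
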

\begin{proof}
Let $A,B$ denote a  bipartition of $G$.
Let $v_1,v'_2,v_2,v_3$ be the vertices of $C$ (in that order), where 
$v_1,v_2 \in A$.
For $i=1,2,3$ let $m_i$ be the edge of $M$ incident with $v_i$. 
Suppose for a contradiction that $m_2$ is the only edge of $M$ with
both ends in $V(C)$.
We may assume that $m_2$ is incident with $v'_2$.
Thus $m_1,m_3$ are distinct and are incident with vertices not on $C$.
We may also assume that $m_1,m_3\in E(G_1)\cup E(G_2)$. 
For $i=1,2,3$ let $M_i$ be the imprint of $M$ on $G_i$ (see the paragraph
prior to Proposition~\ref{pr5.1} for a definition).
Let $J_1:=D(G_1\cup G_2,M_1\cup M_2)$, let $Q$ be a 
cube such that $C$ is a subgraph of $Q$ and otherwise
$Q$ is disjoint from $G_3$, and let $J_2:=D(G_3\cup Q,M'_3)$, where $M'_3$
is a perfect matching of $G_3\cup Q$ with $M_3\subseteq M'_3$ 
that does not use an edge joining $v_1$ and $v_3$.  Such a matching is unique, 
and it has a unique element, say $m_0$, not incident with a vertex of $G_3$.
Let $a$ denote the vertex of $J_2$ that results from contracting $m_0$,
and in both $J_1,J_2$ let $u_1,u_2,u_3$ denote the vertices that result from
contracting the edges incident with $v_1,v_2,v_3$, respectively.  

Let $D_1$ be obtained from $J_1$ by deleting the edges of $C$, and let $D_2$
be obtained from $J_2$ by deleting the vertex $a$ and edges of $Q\cup C$.
We wish to apply Lemma~\ref{lem6.1} to the digraphs $D_1$ and $D_2$.
Since $u_1$ is a source and $u_3$ is a sink of $D_2$, we see
immediately that (a) and (b) of that lemma hold.  We will show that
$i=1$ and $j=2$ satisfy (c).  
Since $G_1$ and $G_2$ are braces, so is $G_1\cup G_2$, and thus $J_1$ is
strongly 2-connected by Proposition~\ref{connectivity}.  
To show that $D_1+E_1$ packs we first notice that $D_1+E_1$ is
isomorphic to $J_1$.  But $G_1\cup G_2$ is obtained from planar braces by
repeatedly applying the trisum operation, and hence $J_1$ has no odd
double circuit or $F_7$ minor by Theorem~\ref{maintool}.  Moreover,
$|V(J_1)|+|E(J_1)|=|E(G_1\cup G_2)|<|E(G)|=|V(D)|+|E(D)|$ by
Lemma~\ref{lemmaX}, and hence $J_1$ (and thus $D_1+E_1$) pack by
the hypothesis of Proposition~\ref{pr4.8}.
Finally, $D_2+Z_2$ is a subdigraph of $J_2$, and hence it packs, by the 
argument of this paragraph.
Thus $\nu (D)=\tau (D)$ by Proposition~\ref{lem6.1}, a contradiction.  
\end{proof}
%
%
\section{Trisum-part III} \label{sec-4sum}
Let $D_1,D_2$ be edge-disjoint subdigraphs of a digraph $D$, let
$X\subseteq V(D_1)\cap V(D_2)$, and let $C$ be a circuit of $D$. 
We say that $C$ {\em passes from $D_1$ to $D_2$ through} $X$ if there
is no vertex $v\in V(D)-X$ such  that the edge of $C$ with head $v$ belongs
to $D_1$ and the edge of $C$ with tail $v$ belongs to $D_2$.

\begin{LE} \label{4sum-le}
Let $D_1$ and $D_2$ be digraphs with 
$V(D_1) \cap V(D_2)=\{u_1,u_2,u_3,u_4\}$ and
$E(D_1) \cap E(D_2)=\emptyset$.
Let $D=D_1\cup D_2$, 
let $E_1=\{u_1u_2,u_3u_2,u_3u_4,u_1u_4\}$, and 
let $E_2=\{u_2u_1,u_2u_3,u_4u_3,u_4u_1\}$.
Assume that 
\begin{itemize}
\item[\rm(1)] for $i=1,2$, $D_i+E_i$ packs, 
\item[\rm(2)] every circuit of $D_1$ is disjoint from every circuit of $D_2$,
\item[\rm(3)] every circuit of $D$ passes from $D_1$ to $D_2$ through
$\{u_1,u_3\}$, and it passes from $D_2$ to $D_1$ through $\{u_2,u_4\}$.
\end{itemize}
Moreover, assume that
for every pair $e_1,e_2\in E_i$ of independent edges one of the following holds:
\begin{itemize}
	\item [(a)] $\nu(D_i+e_1+e_2)\ge\nu(D_i)+2$,
	\item [(b)] $\tau(D_i+e_1)=\tau(D_i)$, or
	\item [(c)] $\tau(D_i+e_2)=\tau(D_i)$.
\end{itemize}
Then $\tau (D)=\nu (D)$.
\end{LE}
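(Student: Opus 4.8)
The plan is to argue by contradiction, assuming $\nu(D)<\tau(D)$, following the pattern of the proof of Lemma~\ref{lem6.1}.

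First I would analyse the circuits of $D$. Call a circuit \emph{mixed} if it uses edges of both $D_1$ and $D_2$. By~(2) every circuit of $D$ lies in $D_1$, lies in $D_2$, or is mixed, and by~(3) a mixed circuit is an alternating union of ``$D_1$-segments'' (paths of $D_1$ from $\{u_2,u_4\}$ to $\{u_1,u_3\}$) and ``$D_2$-segments'' (paths of $D_2$ from $\{u_1,u_3\}$ to $\{u_2,u_4\}$); since $|\{u_1,u_3\}|=|\{u_2,u_4\}|=2$, it has exactly one or exactly two segments of each kind. A $D_1$-segment from $u_j$ to $u_i$ with $i\in\{1,3\}$ and $j\in\{2,4\}$, together with $u_iu_j\in E_1$, is a circuit of $D_1+E_1$, and symmetrically for $D_2$ and $E_2$. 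From this I would record the bounds $\nu(D)\ge\nu(D_1)+\nu(D_2)$ (by~(2) a maximum packing of $D_1$ and one of $D_2$ are disjoint) and, for $i=1,2$, $\tau(D)\le\tau(D_i+E_i)+\tau(D_{3-i})$ (a feedback set of $D_i+E_i$ meets every circuit of $D_i$ and, through its first $D_i$-segment, every mixed circuit). Since $D_i$ packs, by~(1) the right-hand sides equal $\nu(D_i+E_i)+\nu(D_{3-i})$, so $\nu(D_i+E_i)=\nu(D_i)$ would force $\tau(D)\le\nu(D)$; hence we may assume $\nu(D_i+E_i)\in\{\nu(D_i)+1,\nu(D_i)+2\}$ for both $i$. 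I would also note that any packing of $D_i+E_i$ uses at most two edges of $E_i$, and if two, they are independent.

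Put $F_i=\{f\in E_i:\nu(D_i+f)=\nu(D_i)\}$. Since $D_i$ and $D_i+e$ pack, ``(a) or (b) or (c)'' for an independent pair $e_1,e_2\in E_i$ reads: $\nu(D_i+e_1+e_2)\ge\nu(D_i)+2$ unless $e_1\in F_i$ or $e_2\in F_i$. The rerouting mechanism is: if an $E_2$-edge $u_ju_i\notin F_2$, a packing of $D_2+u_ju_i$ of size $\nu(D_2)+1$ contains a circuit $Q+u_ju_i$, giving a path $Q$ of $D_2$ from $u_i$ to $u_j$ and a maximum packing of $D_2$ disjoint from $Q$; by~(a) this works simultaneously for the reverses of an independent pair of $E_1$-edges. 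If some maximum packing of $D_1+E_1$ uses only $E_1$-edges whose reverses lie outside $F_2$, then keeping the circuits of that packing lying in $D_1$, rerouting each used $E_1$-edge through the corresponding path of $D_2$ (turning that circuit into a mixed circuit of $D$), and adjoining the matching maximum packing of $D_2$ yields a packing of $D$ of size $\nu(D_1+E_1)+\nu(D_2)\ge\tau(D)$, a contradiction; here one must ensure the rerouting paths are mutually disjoint and disjoint from the rest, for which~(2), (3) and the simultaneous choice from~(a) are used, possibly after choosing the packings minimally. So we may assume every maximum packing of $D_1+E_1$ uses an $E_1$-edge with reverse in $F_2$, and symmetrically every maximum packing of $D_2+E_2$ uses an $E_2$-edge with reverse in $F_1$.

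In this remaining case I would construct a small feedback set $T_1\cup T_2$, where $T_i$ is a feedback set of $D_i+E_i'$ for a well-chosen $E_i'\subseteq E_i$. The segment analysis shows $T_1\cup T_2$ is a feedback set of $D$ exactly when $E_1'\cup E_2'$ meets every directed circuit of the ``gadget'' on $\{u_1,u_2,u_3,u_4\}$ with edge set $E_1\cup E_2$, a directed $K_{2,2}$ whose circuits are four $2$-cycles and two $4$-cycles and whose minimum feedback edge set has size $4$. For $|T_1\cup T_2|\le\tau(D_1)+\tau(D_2)\le\nu(D)$ we need $\nu(D_i+E_i')=\nu(D_i)$; since an independent pair inside $F_i$ need not satisfy this, I would take $|E_i'|=2$ with $E_i'$ an \emph{intersecting} pair of edges of $E_i$ contained in $F_i$ (a packing of $D_i+E_i'$ uses at most one of the two, so $\nu(D_i+E_i')=\nu(D_i)$). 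Any intersecting pair of $E_1$ meets both $4$-cycles and exactly two of the four $2$-cycles, and the two complementary $2$-cycles are met by a suitable intersecting pair of $E_2$, so such $E_1',E_2'$ always cover the gadget; the task is to place them inside $F_1$ and $F_2$. The crux, and the main obstacle, is to show this is possible once both positive constructions have failed: the forced use of an $E_1$-edge whose reverse lies in $F_2$ yields one $E_2$-edge of $F_2$, the forced use of an $E_2$-edge whose reverse lies in $F_1$ yields one $E_1$-edge of $F_1$, and conditions (a)--(c) applied to the remaining independent pairs of $E_1$ and of $E_2$, combined with $\nu(D_i+E_i)-\nu(D_i)\in\{1,2\}$, must be invoked to supply the missing members of the intersecting pairs. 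I expect to finish by a short case check on which of $u_1u_2,u_3u_2,u_3u_4,u_1u_4$ and their reverses lie in the relevant $F_i$; hypotheses (a)--(c) are evidently tailored for exactly these cases.
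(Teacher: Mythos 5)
Your setup is sound and close in spirit to the paper's proof: the reduction to $\nu(D_i+E_i)>\nu(D_i)$, the sets $F_i=\{e\in E_i:\nu(D_i+e)=\nu(D_i)\}$ (the complements of the sets the paper calls $F_i$), the rerouting of a maximum packing of $D_1+E_1$ through $D_2$ when the used edges have $\nu$-increasing reverses, and the transversal $T_1\cup T_2$ built from transversals of $D_i+E_i'$ with $E_i'$ an adjacent pair inside $F_i$ are all legitimate ingredients, and the last one is essentially how the paper finishes. The problem is the step you yourself flag as ``the crux'' and defer to ``a short case check'': that once both rerouting constructions fail, one of the four complementary adjacent-pair configurations lies inside $F_1\times F_2$. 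This is not a short case check, and with only the facts you have derived it is not even true at the level of the $F_i$-configurations. Failure of your Case A only yields conditions such as ``for each independent pair $P\subseteq E_1$, either $P\cap F_1\neq\emptyset$ or the reversed pair meets $F_2$''; these are satisfied, for instance, by $F_1=\{u_1u_2,u_3u_2\}$ and $F_2=\{u_2u_1,u_2u_3\}$, a configuration in which no complementary pair exists (the pair matching $F_1$ would have to be $\{u_4u_1,u_4u_3\}\subseteq F_2$), the union of the corresponding covered segments misses the $2$-cycles of the gadget through $u_4$, and your rerouting is unavailable because every $\nu$-increasing independent pair of $E_1$ has a reverse edge in $F_2$. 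Nothing in your outline excludes this situation, so the proof cannot conclude there.

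What is missing is exactly the heavy part of the paper's argument. The paper first proves (its Claim~2) that $D$ has a transversal of size $\nu(D_1)+\nu(D_2)+1$; this already requires the hypotheses (a)--(c) applied to the two disjoint paths that arise when $\tau(D_i+E_i)\ge\tau(D_i)+2$, together with the observation that $\{u_1,u_3\}$ plus transversals of $D_1,D_2$ is a transversal of $D$. With this budget in hand it proves (Claim~3) that no edge of $E_1$ and its reverse are both $\nu$-increasing, and then (Claims~4--5) that exactly one side has an independent $\nu$-increasing pair; only after that does the adjacent-pair/transversal endgame (your Case B construction, in the guise of the set $F=E_1-\{e\}$) go through, with (a) invoked once more to kill the last alternative. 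Your proposal replaces Claim~2 by the weaker bound $\tau(D)\le\nu(D_1+E_1)+\nu(D_2)$, which is too weak to yield Claim~3 (a single edge with $\nu$-increasing reverse only gives a packing of size $\nu(D_1)+\nu(D_2)+1$, while your budget may be $\nu(D_1)+\nu(D_2)+2$), and without Claim~3 the configuration above survives. So the gap is genuine: you need an analogue of the paper's Claims~2 and~3 before any finite case analysis on $F_1,F_2$ can close the argument.
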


\begin{proof} Suppose for a contradiction that $\nu (D)<\tau (D)$.

\begin{claim}\label{clm1} 
Let $i=1$ or $i=2$, and let $F\subseteq E_i$. Then one of the following holds:
\begin{enumi}
	\item There is an edge $e\in F$ such that $\nu(D_i+e)>\nu(D_i)$,
	\item $\tau(D_i+F)=\tau(D_i)$, or
	\item there exist independent edges $e_1,e_2\in F$ such that
	      \center{$\nu(D_i)=\nu(D_i+e_1)=\nu(D_i+e_2)<\nu(D_i+e_1+e_2)$}.
\end{enumi}
\end{claim}
\begin{cproof}
Suppose (ii) does not hold, i.e. $\tau(D_i+F)>\tau(D_i)$.
As $D_i+E_i$ packs, $\nu(D_i+F) > \nu(D_i)$.
Now if (i) does not hold then (iii) must hold since 
if two edges $e_1,e_2 \in F$ appear in the same circuit then $e_1,e_2$ are independent.
\end{cproof}
\begin{claim}\label{clm2} 
$D$ has a \VFS\ of size $\nu(D_1)+\nu(D_2)+1$.
\end{claim}
\begin{cproof}
If $\tau(D_i+E_i)\le \tau(D_i)+1$ for some $i \in \{1,2\}$, 
then take the corresponding \VFS, and union it with any \VFS\  
of $D_{3-i}$ of size $\tau(D_{3-i})$.
The resulting set is a \VFS\  in $D$
of size $\nu(D_1)+\nu(D_2)+1$ by (3), as desired.
Thus we may assume that $\tau(D_i+E_i)\ge\tau(D_i)+2$ for $i=1,2$. 
Since $\nu(D_i+E_i)=\tau(D_i+E_i)$ we may assume that there is a packing of 
size $\nu(D_1)$ in $D_1$ and two disjoint paths disjoint from the packing 
joining $u_2$ to $u_3$ and $u_4$ to $u_1$, respectively.
Likewise, we may assume that a similar situation occurs in $D_2$,
but with paths joining $u_3$ to $u_4$ and $u_1$ to $u_2$. (If the paths join
the other pairs we get a packing of size $\nu(D_1)+\nu(D_2)+2$, a
contradiction, because the union of $\{u_1,u_3\}$, any \VFS\ of $D_1$ 
and any \VFS\ of $D_2$ is a \VFS\ of $D$ of the same size.)
Now we use the fact that $D_2$ satisfies (a), (b) or (c) for 
the edges $u_2u_3$ and $u_4u_1$. 
If (a) holds, then we have a packing in $D$ of size $\nu(D_1)+\nu(D_2)+2$, 
and so we
may assume from the symmetry that (b) holds, where $e_1=u_2u_3$. Let $T_2$ be
the corresponding \VFS. 
We may also assume that $\nu(D_1+u_3u_4+u_1u_2)\le
\nu(D_1)+1$, for otherwise we produce a packing of $D$ 
of size $\nu(D_1)+\nu(D_2)+2$.
It follows that $\nu(D_1+u_3u_4+u_1u_2+u_1u_4) \le \nu(D_1)+1$,
because a packing of $D_1+u_3u_4+u_1u_2+u_1u_4$ that uses $u_1u_4$ 
cannot use $u_3u_4$ or $u_1u_2$.
Hence $\tau(D_1+u_3u_4+u_1u_2+u_1u_4) = 
\nu(D_1+u_3u_4+u_1u_2+u_1u_4)\le\tau(D_1)+1$.
Let $T_1$ be a corresponding \VFS. 
Then $T_1\cup T_2$ is a \VFS\  in $D$
of size $\nu(D_1)+\nu(D_2)+1$ by (3), as desired.
\end{cproof}

Let $F_i$ be the set of all edges $e\in E_i$ such that $\tau(D_i+e)>\tau(D_i)$.
\begin{claim}\label{clm3} 
The reversal of no edge in $F_1$ belongs to $F_2$.
\end{claim}
\begin{cproof}
Otherwise we can construct a packing in $D$ 
of size $\nu(D_1)+\nu(D_2)+1$, contrary to Claim~2.
\end{cproof}

\begin{claim}\label{clm4} 
The digraph $D$ has a packing of size $\nu (D_1)+\nu(D_2)$.\end{claim}
\begin{cproof}
The union of any maximum packing of $D_1$ with any maximum packing of 
$D_2$ is as desired by (2).
\end{cproof}

\begin{claim}\label{clm4+1} 
For some $i\in\{1,2\}$, $F_i$ includes two independent edges.
\end{claim}

\begin{cproof}
Suppose for a contradiction that no $F_i$ includes two
independent edges. It follows from Claim~\ref{clm3} 
that there exist adjacent edges
$e_1,e_2\in E_1-F_1$ and adjacent edges $e_3,e_4\in E_2-F_2$ such that
$e_3,e_4$ are the reverses of the edges in $E_1-\{e_1,e_2\}$. Since
$e_1,e_2\not\in F_1$ we deduce from Claim~\ref{clm1} that $\tau(D_1+e_1+e_2)=
\nu(D_1+e_1+e_2)=\nu(D_1)$ and similarly $\tau(D_2+e_3+e_4)=\nu(D_2)$.
But the union of the corresponding \VFS s  
is a \VFS\  in $D$ of size $\nu(D_1)+\nu(D_2)$, 
contrary to Claim~\ref{clm4}.
\end{cproof}
\begin{claim}\label{clm5} 
At most one of $F_1$, $F_2$ includes two independent edges.
\end{claim}
\begin{cproof}
If both of them do, then (a) holds for those pairs, and we get a packing
in $D$ of size at least $\nu(D_1)+\nu(D_2)+1$, contradicting Claim~\ref{clm2}.
\end{cproof}
\noindent
By Claim~\ref{clm4+1} we may assume that $F_2$ includes two independent edges.
We wish to define a set $F\subseteq E_1-F_1$.
If $E_2=F_2$, then $F_1=\emptyset$ by Claim~\ref{clm3}, and we put $F=E_1$.
Otherwise we proceed as follows. If $F_1\ne\emptyset$, then it includes
a unique edge by Claim~\ref{clm3}, Claim~\ref{clm5}
 and the fact that $F_2$ includes two independent
edges. Let $e$ be the unique member of $F_1$. If $F_1=\emptyset$,
then we select $e\in E_1$ such that its reverse does not belong to $F_2$.
In either case the reverse of $e$ does not belong to $F_2$. We put
$F=E_1-\{e\}$. This completes the definition of $F$.
We apply Claim~\ref{clm1} to $D_1$ and $F$. 
Then (i) does not hold, because $F\cap F_1=\emptyset$.
If (ii) holds, then let $T_1$ be the corresponding \VFS, and
let $T_2$ be a \VFS\  of size $\tau(D_2)$ in $D_2$ if $e$ does
not exist, and in $D_2$ with the reverse of $e$ added otherwise.
Then $T_1\cup T_2$ is a \VFS\  in $D$ by (3) of size $\nu(D_1)+\nu(D_2)$,
contrary to Claim~\ref{clm4}. 
Thus (iii) holds. That is, there exist independent
edges $e_1,e_2\in F$ such that $\nu(D_1+e_1+e_2)>\nu(D_1)$.
Let $e_3,e_4\in E_2$ be the reverses of $e_1,e_2$.
Since $F_2$ includes two independent edges we deduce from the choice of
$F$ that $e_3,e_4\in F_2$. Thus 
$\nu(D_2+e_3+e_4)\ge\nu(D_2)+2$
by (a).
By combining the resulting packings we get a packing in $D$ of size
at least $\nu(D_1)+\nu(D_2)+1$, contrary to Claim~\ref{clm2}.
\end{proof}
\begin{PR}\label{pr7.2}
Let $G,M,D$, where 
$\nu(D)<\tau(D)$,
and $G_1,G_2,G_3,C$ 
be as in Proposition~{\rm\ref{pr4.8}}.
Then at least one edge of $M$ has both ends in $V(C)$.
\end{PR}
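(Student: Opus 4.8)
The plan is to argue by contradiction. Suppose $\nu(D)<\tau(D)$ and that no edge of $M$ has both ends in $V(C)$. Write $C=v_1v_2'v_2v_3$ in this cyclic order, with $v_1,v_2\in A$ and $v_2',v_3\in B$. For a vertex $w$ of $C$ let $m_w$ be the edge of $M$ at $w$; by hypothesis $m_w$ has exactly one end in $V(C)$ (and is not an edge of $C$), so it lies in exactly one of $G_1,G_2,G_3$, say $G_{\phi(w)}$. In $D$ the four contractions of $m_{v_1},m_{v_2},m_{v_2'},m_{v_3}$ are distinct vertices $u_1,u_3,u_2,u_4$, and because $E(C)$ is deleted in $G$, the digraph $D$ is the union of three pairwise edge-disjoint ``parts'' $H_1,H_2,H_3$ (from $G_1,G_2,G_3$), any two meeting only in $\{u_1,u_2,u_3,u_4\}$. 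We aim to realize $D$ as a $4$-sum $D_1\cup D_2$ in the sense of Lemma~\ref{4sum-le}, with $\{u_1,u_3\}$ the image of $A\cap V(C)$ and $\{u_2,u_4\}$ that of $B\cap V(C)$, and to verify its hypotheses, giving $\nu(D)=\tau(D)$ --- a contradiction.

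The organizing observation is a parity count. For each $i$, $M\cap E(G_i)$ is a matching of the brace $G_i$ saturating $V(G_i)\setminus V(C)$ together with exactly those $w\in V(C)$ with $\phi(w)=i$; since a brace has colour classes of equal size and $|A\cap V(C)|=|B\cap V(C)|=2$, it follows that $\phi^{-1}(i)$ meets $A$ and $B$ in equally many vertices of $C$, so $|\phi^{-1}(i)\cap V(C)|\in\{0,2,4\}$ for every $i$. Hence, after relabelling, either \textbf{(A)} all of $V(C)$ is matched into a single $G_i$, or \textbf{(B)} $V(C)$ is split as two adjacent vertices of $C$ (one from $A$, one from $B$) into $G_1$ and the other two into $G_2$; in both cases we may assume $G_3$ absorbs no vertex of $C$.

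In case (A) the argument is direct. Let $D_1:=H_i$ (so $M$ restricts to a perfect matching of $G_i$) and let $D_2:=H_j\cup H_k$ be the rest; glue a cube $Q$ to $G_j\cup G_k$ along $C$, delete $E(C)$, and extend the imprint of $M$ by the four ``vertical'' edges of $Q$. Then $D_1+E_1$ is $D(G_i,M)$ with the contracted copy of $C$ restored, and $D_2+E_2$ is the digraph of the padded graph. Both underlying bipartite graphs are obtained from planar braces by repeatedly applying the $4$-sum operation (the cube is a planar brace), hence have no odd double circuit or $F_7$ minor by Theorem~\ref{maintool}, and both are strictly smaller than $G$ (using $|E(G_\ell)|\ge 12$, Lemma~\ref{lemmaX}), so they pack by the minimality hypothesis of Proposition~\ref{pr4.8}; this verifies condition (1). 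Condition (2) holds because every in-edge of $u_1$ or $u_3$, and every out-edge of $u_2$ or $u_4$, lies in $D_1$ (being an edge at the $G_i$-mate of the corresponding $C$-vertex), so no circuit of $D_2$ uses any $u_k$ and circuits of $D_2$ avoid $V(D_1)$; for the same reason every circuit of $D$ leaves $D_1$ for $D_2$ only at $\{u_1,u_3\}$ and returns only at $\{u_2,u_4\}$, which is condition (3). For the last hypothesis --- that for each independent pair $e_1,e_2\in E_i$ one of (a) $\nu(D_i+e_1+e_2)\ge\nu(D_i)+2$, (b) $\tau(D_i+e_1)=\tau(D_i)$, (c) $\tau(D_i+e_2)=\tau(D_i)$ holds --- one uses that $D_i+E_i$ packs (so $D_i$, $D_i+e_1$, $D_i+e_2$, $D_i+e_1+e_2$ all pack) and is strongly $2$-connected, being $D$ of a brace: if neither (b) nor (c) holds then adding $e_1$ and adding $e_2$ each raise $\nu$ by one, yielding augmenting directed paths between the two relevant pairs of contact vertices, which strong $2$-connectivity lets us route to be disjoint from each other and from a fixed maximum packing of $D_i$, giving (a) --- the device used at the end of the proof of Lemma~\ref{lem6.1}. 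Lemma~\ref{4sum-le} then gives $\nu(D)=\tau(D)$.

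Case (B) --- where no partition of the three braces into two is ``clean'' --- is the main obstacle, and I would handle it by first splitting off $G_3$: glue a cube to $G_3$ along $C$ so that, with $D_2:=H_3$, the digraph $D_2+E_2$ packs as in case (A), put $D_1:=H_1\cup H_2$, and verify (2),(3) and the $(a)/(b)/(c)$ condition exactly as above. What is left is that $D_1+E_1$ is $D(G_1\cup G_2,M)$ with the edges of $C$ retained; this is a strictly smaller instance of the same shape --- a bipartite graph which, after deleting the edges of its central $4$-circuit $C$, is the $4$-sum of the braces $G_1$ and $G_2$, hence has no odd double circuit or $F_7$ minor --- so it packs by the minimality hypothesis of Proposition~\ref{pr4.8} once one checks that retaining $E(C)$ still produces a graph obtained from planar braces by trisums (the crux of this case); alternatively one re-enters the case analysis of Propositions~\ref{pr5.2} and~\ref{prkis1} for this smaller digraph. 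The two points that require care throughout are: the gadget produced by a cube pad always has the ``reverse-$C$'' orientation, which forces the clean side to be the one carrying $E_1$; and one must confirm that each bipartite graph produced along the way is genuinely obtained from planar braces by trisums, so that the minimality hypothesis is applicable.
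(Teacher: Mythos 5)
Your overall architecture is the paper's: contract $M$, split $D$ along the four vertices coming from $V(C)$, pad one side with a cube along $C$, invoke Theorem~\ref{maintool}, Lemma~\ref{lemmaX} and the minimality hypothesis of Proposition~\ref{pr4.8} to get packing of the two pieces, and finish with Lemma~\ref{4sum-le}; your parity analysis of how $m_1,\dots,m_4$ distribute over $G_1,G_2,G_3$ and your verification of conditions (1)--(3) are essentially right. The proof breaks, however, exactly where you verify the remaining hypothesis of Lemma~\ref{4sum-le}. Your claim that if neither (b) nor (c) holds then ``strong $2$-connectivity lets us route'' the two augmenting paths disjointly from each other and from a fixed maximum packing, so that (a) holds, is not an argument: the two augmentations come with two \emph{different} maximum packings and two different paths, and neither strong $2$-connectivity nor the fact that $D_i+E_i$ packs provides any way to merge them into one packing of size $\nu(D_i)$ together with two disjoint paths. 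The device at the end of the proof of Lemma~\ref{lem6.1} that you invoke is of a different kind (it shows a transversal cannot avoid two paths meeting only at $u_2$, by exhibiting a subdigraph $H$ with $\nu(H)=1<2=\tau(H)$); it does not produce simultaneous disjoint augmenting paths. Verifying (a)/(b)/(c) is precisely the hard content of Section 7: for the padded-cube side the paper uses the source/sink structure of $u_1,u_3,u_2,u_4$ plus a transversal of $D_2'$ of size $\tau(D_2)+1$ meeting the added $4$-circuit in exactly one vertex; for the other side (the piece carrying the $M$-mates of $V(C)$, in particular your case (B) where it comes from $G_1\cup G_2$) it requires the decomposition into $L_1,L_2$, the claim excluding certain pairs of disjoint paths (proved via an odd double circuit minor inside an auxiliary padded digraph that packs by minimality), the inequality bookkeeping (A)--(F), and the final gadget $L_2'$ shown both to pack and to violate $\nu=\tau$. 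None of this is recoverable from your one-sentence routing claim, so the central step of the proposition is missing.

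A secondary but genuine defect: in your case (A) you always take $D_1:=H_i$ and pad $G_j\cup G_k$ with the cube. Counting edges, the $4$-sum of $G_j\cup G_k$ with $Q$ along $C$ has $|E(G_j)|+|E(G_k)|$ edges while $|E(G)|=|E(G_i)|+|E(G_j)|+|E(G_k)|-12$, so the minimality hypothesis applies to the padded graph only when $|E(G_i)|>12$; Lemma~\ref{lemmaX} allows $G_i$ to be a cube, and then your construction gives no size decrease at all. The paper avoids this by choosing which side to pad according to size (taking $H_1=G_1\cup G_2$, $H_2=G_3$ when $|E(G_1)|=12$) and by disposing of the residual all-cubes configuration by an explicit check ($\nu(D)=\tau(D)=3$, Figure 2), a case your proposal never reaches. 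Finally, the issue you flag as ``the crux'' of case (B) --- whether $G_1\cup G_2$ with $E(C)$ retained is still built from planar braces --- is comparatively minor and is absorbed by the paper's parallel-edge-friendly definition of the $4$-sum; and ``re-entering'' Propositions~\ref{pr5.2} and \ref{prkis1} is not available, since those propositions concern the minimal counterexample $D$ itself, not arbitrary smaller digraphs.
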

\begin{proof}
Let $A,B$ denote a bipartition of $G$.
Let $u_1,u_2,u_3,u_4$ be the vertices of $C$ (in that order),
where $u_1,u_3 \in A$ and $u_2,u_4 \in B$.
Suppose for a contradiction that no edge of $M$ has both ends in $V(C)$,
and let the edges of $M$ incident to vertices of $C$ be 
$m_1=u_1u'_1$, $m_2=u_2u'_2$, $m_3=u_3u'_3$, $m_4=u_4u'_4$.
For $i=1,2,3,4$ we will use $u_i$ to also denote the vertex of $D$ that
results from contracting $m_i$.
Let $Q$ be a cube such that $C$ is a subgraph 
of $Q$, and $Q$ is otherwise disjoint from $G_1\cup G_2\cup G_3$. 
Since $G$ is a brace, $|V(G_i)\setminus\{u_1,\ldots,u_4\}|$ is even for $i=1,2,3,4$.
As each of $m_1,m_2,m_3,m_4$ have exactly one end in $C$,
we may assume (by renumbering $G_1,G_2,G_3$ and $u_1,u_2,u_3,u_4$) that
$\{m_1,m_2,m_3,m_4\}\subseteq E(G_1)$, or $\{m_3,m_4\}\subseteq E(G_1)$ and $\{m_1,m_2\}\subseteq E(G_2)$.  
In the former case we may also assume that
$|E(G_2)|\le |E(G_3)|$.  If $\{m_1,m_2,m_3,m_4\}\subseteq E(G_1)$ and
$|E(G_1)|>12$, then let $H_1=G_1$ and $H_2=G_2\cup G_3$; otherwise
let $H_1=G_1\cup G_2$ and $H_2=G_3$.  Thus $|E(H_1)|>12$ by Lemma~\ref{lemmaX}.
Then both $H_1$ and $H_2$ are obtained from planar braces by
repeatedly applying the trisum operation. Let 
$J_1=D(H_1,M)$, and let $D_1=J_1\backslash E(C)$.
Let $J_2$ be obtained from $H_2$ by directing every edge from $A\cap
V(H_2)$ to $B\cap V(H_2)$, and then contracting every edge of $M\cap E(H_2)$,
and let $D_2=J_2\backslash E(C)$.  Let us notice that $u_1,u_3$ are
sources, and $u_2,u_4$ are sinks of $D_2$.  Thus conditions (2) and (3)
of Lemma~\ref{4sum-le} hold.  

We now prove that condition (1) holds.  
The graph $H_1$ is obtained from planar braces by repeatedly applying
the 4-sum operation.  By Theorem~\ref{maintool} the digraph $J_1$ has
no minor isomorphic to an odd double circuit or $F_7$.  Moreover
$|V(J_1)|+|E(J_1)|<|V(D)|+|E(D)|$ by Lemma~\ref{lemmaX}, and so $J_1$ 
packs by the hypothesis of Proposition~\ref{pr4.8}.  But $J_1$ is
isomorphic to $D_1+E_1$, and hence $D_1+E_1$ packs.
To prove that $D_2+E_2$ packs
we first notice that $D_2+E_2$ is a subdigraph of $D(H_2\cup Q,M_2)$, where
$M_2$ is a perfect matching of $H_2\cup Q$ that includes $E(H_2)\cap M$ and
no edge with both ends in $V(C)$.  But $D(H_2\cup Q,M_2)$ packs by 
the hypothesis of Proposition~\ref{pr4.8} and the fact that $|E(H_1)|
 >12$.  Thus conditions (1)--(3) of Lemma~\ref{4sum-le} hold.

Next we show that for $i=1,2$, and for every pair $e_1,e_2\in E_i$ of
independent edges one of (a), (b), (c) holds.  We first do so
for $i=2$.  It suffices to argue for $e_1=u_2u_1$ and $e_2=u_4u_3$.
Since $D(H_2\cup Q,M_2)$ packs by the previous paragraph, we see that $D'_2=
D_2+\{u_2u_1,u_3u_2,u_4u_3,u_1u_4\}$ also packs.  But clearly $\tau 
(D'_2)>\tau (D_2)$, because $u_1,u_3$ are sources and $u_1,u_4$ are
sinks of $D_2$, and $\{u_1,u_2,u_3,u_4\}$ is the vertex-set
of a circuit of $D'_2$.  If $\nu (D'_2)\ge\nu(D_2)+2$, then (a) holds.
Thus we may assume that $\tau (D'_2)=\tau (D_2)+1$.  Let $T$ be a
corresponding \VFS\ of $D_2'$.  Since $\{u_1,u_2,u_3,u_4\}$ is the vertex-set
of a circuit of $D'_2$, and $|T|=\nu (D_2)+1$, we see that $|\{u_1,u_2,u_3,u_4
\}\cap T|=1$.  Let $T'=T-\{u_1,u_2,u_3,u_4\}$.  If $u_1\in T$ or
$u_2\in T$, then $T'$ shows that (c) holds and if $u_3\in T$ or $u_4\in T$,
then $T'$ shows that (b) holds, as desired.  This proves that one of 
(a), (b), (c) holds for $i=2$.

It remains to show that one of (a), (b), (c) holds for $i=1$.  
Let $e_1,e_2$ be independent edges as in Lemma~\ref{4sum-le};
for the purpose of this paragraph we may take advantage of symmetry
and assume that $e_1=u_1u_2$ and $e_2=u_2u_4$.
For $j=1,2,3,4$ let $u_jv_j$ denote the edges of $Q$ with exactly one end in $V(C)$.
Let $M_1$ be the union of $M\cap E(H_1)$ and two edges of $Q$,
one with ends $v_1v_2$ and the other with ends $v_3v_4$.
Let us consider the digraph $D'_1:=D(H_1\cup
Q\backslash E(C),M_1)$.  Then $D'_1$ is isomorphic to the graph $D_1+
\{u_1a,au_2,ab,ba,u_3b,bu_4\}$.  If $D'_1$ packs, then one of (a), (b),
(c) holds:  clearly $\tau (D'_1)>\tau (D_1)$ because $D'_1$ has a circuit
disjoint from $D_1$.  If $\nu (D'_1)\ge \nu (D_1)+2$, then (a) holds;
if $\tau (D'_1)=\tau (D_1)+1$, then let $T$ be a corresponding 
\VFS.  
If $a\in T$ then $T\cap V(D_1)\cup\{u_1\}$ proves (b).
If $b\in T$ then $T\cap V(D_2)\cup\{u_3\}$ proves (c).
Thus we may assume that $D'_1$ does not pack, and so by the hypothesis of
Proposition~\ref{pr4.8} we see that $|E(H_2)|\le |E(Q)|$.  Thus $H_2$ is
a cube by Lemma~\ref{lemmaX}.  
In particular, $H_2=G_3$ and $H_1=G_1\cup G_2$.  The
definition of $H_1$ and $H_2$ implies that $\{m_1,m_2,m_3,m_4\}\not
\subseteq E(G_1)$ or $|E(G_1)|=12$.

\begin{figure}[!htb]
\epsfysize=2.2in
\epsfbox{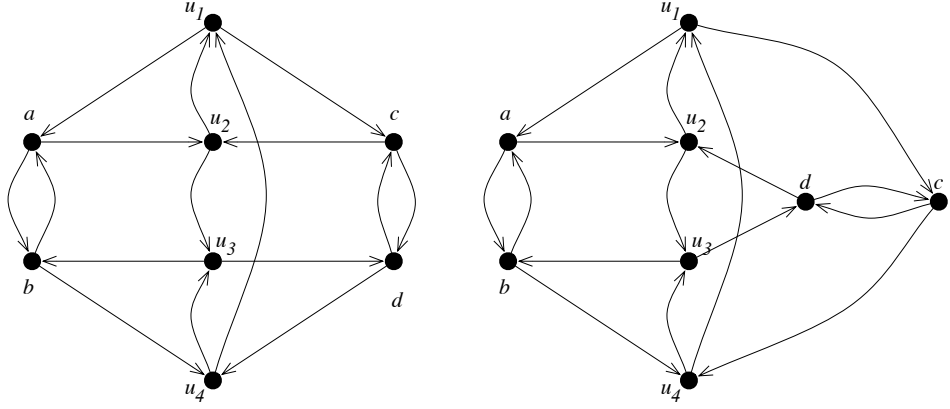}
\caption{Two digraphs.} \label{twographs}
\end{figure}

Let us first assume that $\{m_1,m_2,m_3,m_4\}\subseteq E(G_1)$.
Then $|E(G_1)|=12$, and so $G_1$ is a cube.
Since $|E(G_2)|\le |E(G_3)|$ and $G_3=H_2$ is a cube, 
we deduce that $G_1,G_2,G_3$
are all cubes.  
Let $a,b$ (resp. $c,d$) denote the edges of $M\setminus C$ in $G_2$ (resp. $G_3$).
Then $D$ is isomorphic to one of the digraphs depicted in Figure~\ref{twographs}.
For both (a) and (b), $\{u_1a,au_2,u_2u_1\},\{u_3b,bu_4,u_4u_3\},\{cd,dc\}$
is a packing of circuits and $\{a,u_3,c\}$ is a transversal.
In particular, $\nu (D)=3=\tau (D)$, a contradiction.

Thus we may assume that $\{m_1,m_2,m_3,m_4\}\not\subseteq E(G_1)$, and so
$\{m_3,m_4\}\subseteq E(G_1)$ and $\{m_1,m_2\}\subseteq E(G_2)$.
Moreover, $H_1=G_1\cup G_2$.  
For $i=1,2$ let $L_i$ be obtained 
from $G_i\backslash E(C)$
by orienting all the edges of $G_i\backslash E(C)$ from $A$ to $B$ 
and by contracting all edges of $M\cap E(G_i)$.
Then
\begin{itemize}
\item[$(*)$] $u_1$ is a source and $u_2$ is a sink of $L_1$, and 
$u_3$ is a source and $u_4$ is a sink of $L_2$.
\end{itemize}


\begin{claim}\label{clm7} \mbox{ }
\begin{enumerate}
\item 
The digraph $L_1$ does not have disjoint paths $P_1$ from $u_1$ to $u_3$ and $P_2$ from $u_4$ to $u_2$.
\item 
The digraph $L_2$ does not have disjoint paths $P_1$ from $u_3$ to $u_1$ and $P_2$ from $u_2$ to $u_4$.
\end{enumerate}
\end{claim}
\begin{cproof}
We may assume that $i=1$, and suppose for a contradiction that $P_1,P_2$ exist.
For the cube $Q$ we have $V(Q)=\{u_1,u_2,u_3,u_4,v_1,v_2,v_3,v_4\}$ and 
$E(Q)=C\cup\{u_iv_i:i=1,2,3,4\}\cup\{v_1v_2,v_2v_3,v_3v_4,v_4v_1\}$.
Let $M'=M\cup\{u_1v_1,u_2v_2,v_3v_4\}$.
Let $Q'$ be the graph obtained from $Q$ by replacing every edge of $C$ by two parallel edges.
Then $D(G_1\cup Q',M')$ contains as a subdigraph a digraph $D'$ which is obtained from $L_1$ by adding
a new vertex $w$ and edges $u_2u_1$, $u_1u_2$, $u_3w$, $wu_4$, $wu_1$, and $u_2w$.  
But that is a contradiction, because $D'$ has an odd double circuit minor (contract all but one edge of each path comprising $L_1$) and
by Theorem~\ref{maintool}, Lemma~\ref{lemmaX} and the hypothesis of Proposition~\ref{pr4.8}, $D(G_1\cup Q',M')$ packs, and hence so does $D'$.
\end{cproof}
\noindent
We now show that one of (a), (b), (c) holds for the  pair of edges $u_1u_4$ and $u_3u_2$.
Indeed, suppose that none of (a), (b), (c) hold. 
Then $D_1+u_1u_4$ has a packing of size $\nu(D_1)+1$.
This packing includes a circuit containing the edge $u_1u_4$.
Hence, $D_1$ has a packing $\zC$ of size $\nu(D_1)$ and a path $P_1$ from $u_4$ to $u_1$ disjoint from every $C\in\zC$.
Similarly, $D_1$ has a packing $\zC'$ of size $\nu(D_1)$ and a path $P_2$ from $u_2$ to $u_3$ disjoint from every $C\in\zC'$.
Since $P_1$ and $P_2$ are disjoint from any minimum \VFS\ of $D_1$ we deduce that their union is acyclic.
By $(*)$ we deduce that $P_1$ can be decomposed into either
($\alpha$) subpaths $P'_1$ from $u_2$ to $u_1$ of $L_2$ and $P''_1$ from $u_1$ to $u_3$ of $L_1$, or
($\beta$) subpaths $P'_1$ from $u_2$ to $u_4$ of $L_2$ and $P''_1$ from $u_4$ to $u_3$ of $L_1$.
Similarly, $P_2$ can be decomposed into either
($\alpha'$) subpaths $P'_2$ from $u_4$ to $u_2$ of $L_1$ and $P''_2$ from $u_2$ to $u_1$ of $L_2$, or  
($\beta'$) subpaths $P'_2$ from $u_4$ to $u_3$ of $L_1$ and $P''_2$ from $u_3$ to $u_1$ of $L_1$.
If ($\alpha$) and ($\alpha'$) occur then the paths $P''_1$ and $P'_2$ contradict Claim~\ref{clm7}(1).
If ($\beta$) and ($\beta'$) occurs then paths $P'_1$ and $P''_2$ contradict Claim~\ref{clm7}(2).
All other cases contradict the fact that $P_1\cup P_2$ is acyclic.

It remains to show that one of (a), (b), (c) holds for the  pair of edges
$u_1u_2$ and $u_3u_4$. Suppose it does not. Thus $D_1+u_3u_4$ has a packing
of size $\nu(D_1)+1$. This packing includes a circuit containing
the edge $u_3u_4$, 
and hence $D_1$ has a packing $\zC$ of size $\nu (D_1)$, and a path
$P$ from $u_4$ to $u_3$ disjoint from every member of $\zC$.
It follows from $(*)$ and Claim~\ref{clm7} that $P$ is a subgraph of $L_1$.
Since $\zC$ does not use $u_3$ or $u_4$ (because every member of
$\zC$ is disjoint from $P$) we deduce that at most one circuit of 
$\zC$ intersects both $E(L_1)$ and $E(L_2)$.  
Thus either (letting $\nu=\nu (D_1)$ and using $(*)$)
\begin{itemize}
\item[(A)] $\nu (L_1+u_3u_4) + \nu (L_2)\ge\nu+1$, or
\item[(B)] $\nu (L_1+u_2u_1+u_3u_4)+ \nu (L_2+u_1u_2)\ge \nu+2$,
\end{itemize}
where (A) (resp. (B)) occurs when no (resp. exactly one) circuit of $\zC$ intersects both $E(L_1)$ and $E(L_2)$.
Similarly, either
\begin{itemize}
\item[(C)] $\nu (L_2+u_1u_2)+\nu (L_1)\ge\nu+1$, or
\item[(D)] $\nu(L_2+u_1u_2+u_4u_3)+\nu (L_1+u_3u_4)\ge \nu+2$.
\end{itemize}

By $(*)$ $\nu (L_1)+\nu (L_2)\le \nu$.  Thus if (A) and (C) hold we deduce
that
$$\nu (D_1+u_1u_2+u_3u_4)\ge \nu (L_1+u_3u_4) + \nu (L_2+u_1u_2)=2\nu+2-
\nu (L_1)-\nu (L_2)\ge \nu+2,$$
where the first inequality follows from $(*)$.  It follows that (a) holds,
a contradiction.  Assume now that (B) and (D) hold.  Clearly
$\nu (L_2+u_1u_2+u_4u_3)\ge\nu(L_2+u_1u_2)$, $\nu(L_1+u_2u_1+u_3u_4)\ge
\nu (L_1+u_3u_4)$ and $\nu (L_1+u_2u_1+u_3u_4)+
\nu (L_2+u_1u_2+u_4u_3)\le \nu +2$.  Therefore
$$2\nu+4\ge \nu(L_1+u_2u_1+u_3u_4)+\nu(L_2+u_1u_2)+\nu(L_2+u_1u_2+u_4u_3)+
\nu (L_1+u_3u_4)\ge 2\nu+4.$$
Thus equality holds throughout, and, in particular,
$\nu(L_1+u_2u_1+u_3u_4)=\nu(L_1+u_3u_4)$.
Since $\nu (L_2)\ge \nu(L_2+u_1u_2)-1$ we have
$$\nu(L_1+u_3u_4)+\nu(L_2)\ge \nu(L_1+u_2u_1+u_3u_4)+\nu(L_2+u_1u_2)-1
\ge\nu+1$$ 
by (B),
and so (A) holds.  Thus we have shown that if (B) and (D) hold,
then (A) holds as well.  

To complete the proof we may assume that 
either (A) and (D) hold or that (B) and (C) hold.
By symmetry we may assume that the former case occurs and that (C) does not hold.
We need two claims.
\begin{itemize}
\item[(E)] $\nu(L_2+u_1u_2)\le\nu(L_2)$
\end{itemize}
To prove (E) we subtract the negation of (C) from (A), and use the fact that
$\nu(L_1+u_3u_4)\le\nu(L_1)+1$.  We find that $\nu(L_2+u_1u_2)\le\nu(L_2)$,
which is (E).
\begin{itemize}
\item[(F)] $\nu(L_2+u_4u_3)\le\nu(L_2)$
\end{itemize}
To prove (F) we use the fact that $\nu(L_1+u_3u_4)+\nu(L_2+u_4u_3)\le \nu+1$.
(Otherwise those packings could be combined to produce a packing in
$D_1$ of size $\nu+1$.)
By subtracting this inequality from (A) we obtain (F).

Let $L'_2=L_2+\{u_1a,au_2,u_3b,bu_4,ab,ba,u_4u_3\}$.  Let $Q'$ be obtained from
$Q$ by adding a three-edge path $P'$ joining $u_3$ and $u_4$, and otherwise
disjoint from $G\cup Q$.  Let $M'_2$ be a perfect matching of 
$G_2\cup Q'$ that includes $M\cap E(G_2)$, two edges of $P'$, and two edges
of $Q\backslash V(C)$: one with ends adjacent to $u_1$ and $u_2$, and
the other with ends adjacent to $u_3$ and $u_4$.  Thus $L'_2$ is 
isomorphic to $D(G_2\cup Q'\backslash E(C),M'_2)$.  The
graph $G_2\cup Q'$ is a subgraph of a brace $H$ in such a way that
$H\backslash V(G_2\cup Q')$ has a perfect matching and $H$ is obtained
from planar braces by trisumming.  By Theorem~\ref{maintool} the digraph
$L'_2$ has no minor isomorphic to an odd double circuit or
$F_7$.  By Lemma~\ref{lemmaX} the digraph $L'_2$ satisfies
$|V(L'_2)|+|E(L'_2)|<|V(D)|+|E(D)|$, and hence $L'_2$ packs by the
hypothesis of Proposition~\ref{pr4.8}.  We will show that
$\tau (L'_2)\ge\nu (L_2)+2$ and $\nu (L'_2)\le\nu (L_2)+1$.  This is 
a contradiction that will prove the proposition.  

We first show that
$\tau( L'_2)\ge\nu(L_2)+2$.  Indeed, suppose for a contradiction that
$L'_2$ has a \VFS\ $T$ of size at most $\nu(L_2)+1$.  Since $\{b,u_3,u_4\}$
is the vertex-set of a circuit of $L'_2$, one of those vertices belongs to $T$.
If $b\in T$, then $T-\{b\}$ is a \VFS\ of $L_2+u_1u_2+u_4u_3$ of size
$\nu(L_2)$.  Thus $\nu(L_2+u_1u_2+u_4u_3)+\nu(L_1+u_3u_4)\le\nu(L_2)+
\nu(L_1+u_3u_4)\le\nu+1$, contrary to (D).  If $b\not\in T$, then
$u_3\in T$ or $u_4\in T$, and $a\in T$, because $\{a,b\}$ is the vertex-set
of a circuit of $L'_2$.  Then $T-\{u_3,u_4,a\}$ is a \VFS\ of
$L_2$ by $(*)$ of size $\nu (L_2)-1$, a contradiction.  This proves
that $\tau (L'_2)\ge\nu(L_2)+2$.

Finally, it remains to prove that $\nu (L'_2)\le\nu (L_2)+1$.  To this end
suppose for a contradiction that $\zC$ is a packing in $L'_2$ of 
size $\nu (L_2)+2$.  Choose a circuit $C\in\zC$ such that $b\in V(C)$.  If
such a choice is not possible choose $C$ with $a\in V(C)$, and if that
is not possible choose $C$ arbitrarily.  It follows that the packing
$\zC-\{C\}$ uses at most one of $a$ and $u_4$, and hence the packing
$\zC-\{C\}$ proves that either $\nu(L_2+u_4u_3)>\nu(L_2)$, or
$\nu (L_2+u_1u_2)>\nu(L_2)$, contrary to (E) and (F).  This
proves that $\nu(L'_2)\le\nu(L_2)+1$, and hence completes the proof of
the proposition.  
\end{proof}
%
%
\section{Concluding remarks} \label{sec-conclusion}

Consider a digraph $D$ with weight function $w: V(D) \rightarrow Z_+$.
The weight of a subset $T \subseteq V(D)$ is defined as $\sum_{v \in T} w(v)$.
The value of the minimum weight \VFS\  is written $\tau(D,w)$.
The cardinality of the largest family $\zC$ of circuits with the property that
for every $v \in V(D)$ at most $w(v)$ circuits of $\zC$ use $v$, 
is denoted $\nu(D,w)$.
\begin{figure}[!htb]
\centerline{\epsfysize=1.8in
\epsfbox{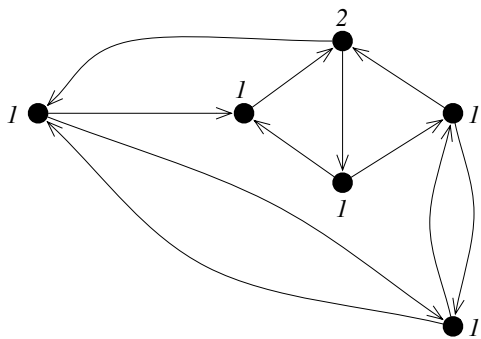}}
\caption{Digraph $D$ with $\tau(D,w) > \nu(D,w)$.} \label{w-fig}
\end{figure}
Let $e: V(D) \rightarrow Z_+$ where $e(v)=1, \forall v \in V(D)$.
Then $\tau(D)=\tau(D,e)$ and $\nu(D)=\nu(D,e)$.
Observe that for every digraph $D$ and for all positive weight functions $w$ 
we have $\tau(D,w) \geq \nu(D,w)$.
A natural extension of Theorem \ref{main-th} would be to characterize 
which are the digraphs $D$ for which $\tau(H,w)=\nu(H,w)$, 
for every subdigraph $H$ of $D$ and
for every weights $w: V(D) \rightarrow Z_+$.
This class of digraphs is closed under taking minors, and thus does not contain 
$F_7$ or odd double circuits. However, there are other obstructions as is 
illustrated by the digraph $D$ of Figure~\ref{w-fig}. 
Next to each vertex $v$ we indicate
the weight $w(v)$. Here we have $3=\tau(D,w) > \nu(D,w)=2$, and $D$ does not
contain $F_7$ or an odd double circuit as a minor. 
In fact many other obstructions can be obtained by a similar construction.
A related problem is to study the class of digraphs for which 
$\tau(D,w)=\nu(D,w)$ for 
all $w: V(D) \rightarrow Z_+$ but without requiring that the same property 
hold for every subdigraph.
This can be formulated as a hypergraph matching problem where the vertices 
of the hypergraph
are the vertices of the digraph and the edges are the vertex set of circuits 
of $D$.  There is a long list of obstructions to this property. 
However the problem has been
solved for the special case when $D$ is a tournament~\cite{Cai:98} 
or a bipartite tournament~\cite{Cai:99}.
%
%

\end{document}